\documentclass[12pt, reqno]{amsart}
\usepackage{mathrsfs}
\usepackage{amsfonts}
\usepackage[sorted,compressed-cites,sorted-cites,initials]{amsrefs}
\usepackage[centertags]{amsmath}
\usepackage{amssymb,array}
\usepackage{amsthm,youngtab}
\usepackage{graphicx}
\usepackage{caption}
\usepackage{ytableau}
\usepackage{enumerate,enumitem}
\SetLabelAlign{center}{\hfill #1\hfill}
\usepackage[textwidth=16cm, hmarginratio=1:1]{geometry}
\usepackage{tikz}
\usetikzlibrary{arrows,automata, positioning}
\usepackage{rotating}
\usepackage{mathdots}
\usepackage[all,cmtip]{xy}
\usepackage[titletoc, title]{appendix}
\usepackage{amscd}
\usepackage[colorlinks]{hyperref}
\usepackage{float}
\hypersetup{
bookmarksnumbered,
pdfstartview={FitH},
breaklinks=true,
linkcolor=blue,
urlcolor=blue,
citecolor=blue,
bookmarksdepth=2
}
\usetikzlibrary{mindmap,trees}

\theoremstyle{plain}
   \newtheorem{theorem}{Theorem}[section]
   
   \newtheorem{lemma}[theorem]{Lemma}

\theoremstyle{definition}
   \newtheorem{definition}[theorem]{Definition}
   \newtheorem{example}[theorem]{Example}

   \newtheorem{remark}[theorem]{Remark}

\numberwithin{equation}{section}

\newlength{\mysizetiny}
\setlength{\mysizetiny}{0.3em}
\newlength{\mysizesmall}
\setlength{\mysizesmall}{0.8em}
\newlength{\mysize}
\setlength{\mysize}{1.3em}
\newlength{\mysizelarge}
\setlength{\mysizelarge}{2em}

\begin{document}

\title{Hernandez-Leclerc modules and snake graphs}
\author{Bing Duan, Jian-Rong Li and Yan-Feng Luo}
\address{Bing Duan, School of mathematics and statistics, Lanzhou University, 730000, P. R. China; School of physical science and technology, Lanzhou University, 730000, P. R. China.} 
\email{duanb@lzu.edu.cn}
\address{Jian-Rong Li, Institute of Mathematics and Scientific Computing, University of Graz, Graz 8010, Austria.} 
\email{lijr07@gmail.com}
\address{Yan-Feng Luo, School of mathematics and statistics, Lanzhou University, 730000, P. R. China.} 
\email{luoyf@lzu.edu.cn} 
\date{}

\begin{abstract}
In 2010, Hernandez and Leclerc studied connections between representations of quantum affine algebras and cluster algebras. In 2019, Brito and Chari defined a family of modules over quantum affine algebras, called Hernandez-Leclerc modules.  We characterize the highest $\ell$-weight monomials of Hernandez-Leclerc modules. We give a non-recursive formula for $q$-characters of Hernandez-Leclerc modules using snake graphs, which involves an explicit formula for $F$-polynomials. We also give a new recursive formula for $q$-characters of Hernandez-Leclerc modules. 
\end{abstract}

\keywords{Cluster algebras; Quantum affine algebras; Hernandez-Leclerc modules; Snake graphs; $q$-characters}

\maketitle

\tableofcontents

\section{Introduction}

Quantum groups were introduced independently by Drinfeld \cite{D85,D87} and Jimbo \cite{Jim85}. Let $\mathfrak{g}$ be a simple Lie algebra over $\mathbb{C}$ and let $U_{q}(\widehat{\mathfrak{g}})$ be the corresponding untwisted quantum affine algebra with quantum parameter $q\in \mathbb{C}^\times$ not a root of unity. Denote by $\mathscr{C}$ the category of finite dimensional representations of $U_{q}(\widehat{\mathfrak{g}})$. It is well-known that $\mathscr{C}$ is not semisimple but an abelian tensor category \cite{HL10}. The isomorphism classes of finite-dimensional simple $U_{q}(\widehat{\mathfrak{g}})$-modules can be parametrized by Drinfeld polynomials in \cite{CP91,CP94,CP95}. Equivalently, each simple $U_{q}(\widehat{\mathfrak{g}})$-module can also be parametrized by the highest dominant monomial of its $q$-character \cite{FR98}.

Cluster algebras were introduced by Fomin and Zelevinsky in their seminal work \cite{FZ02}. A cluster algebra is a commutative ring with a set of distinguished generators called cluster variables which are defined through iterative processes known as mutations. 
 
A finite-dimensional $U_{q}(\widehat{\mathfrak{g}})$-module is said to be \textit{prime} if it admits no nontrivial tensor factorization \cite{CP97}. A simple $U_{q}(\widehat{\mathfrak{g}})$-module is said to be \textit{real} if its tensor square is also simple \cite{L03}. In \cite{HL10}, Hernandez and Leclerc introduced the notion of monoidal categorification of cluster algebras.  An abelian tensor category is said to be a monoidal categorification of a cluster algebra if the Grothendieck ring of the category is isomorphic to the cluster algebra and the classes of the real (respectively, real prime) simple modules correspond to cluster monomials (respectively, cluster variables).

For each $\ell\in \mathbb{Z}_{\geq 0}$, Hernandez and Leclerc \cite{HL10} introduced a full monoidal subcategory $\mathscr{C}_\ell$ of $\mathscr{C}$ whose objects are characterized by certain restrictions on the roots of the Drinfeld polynomials of their composition factors. 

\subsection{Highest $\ell$-weight monomials of Hernandez-Leclerc modules}
As a generalization of $\mathscr{C}_1$ \cite{HL10,HL13}, in \cite{BC19} Brito and Chari introduced a subcategory $\mathscr{C}_\xi$ and a quiver $Q_\xi$ depending on a choice of a height function $\xi$. These real prime simple $U_{q}(\widehat{\mathfrak{g}})$-modules in $\mathscr{C}_\xi$ are called Hernandez-Leclerc modules by Brito and Chari. Brito and Chari proved that $\mathscr{C}_\xi$ is a monoidal categorification of the cluster algebra $\mathcal{A}({\bf \widetilde{x}},Q_\xi)$ with coefficients of type $A$ \cite[Section 1.3]{BC19}. Hernandez-Leclerc modules are in bijection with cluster variables and frozen variables, among the initial cluster variables and frozen variables correspond to some Kirillov-Reshetikhin modules. Let $\mathcal{K}_0(\xi)$ be the Grothendieck ring of $\mathscr{C}_\xi$. Denote by $\iota$ the algebraic isomorphism $\mathcal{A}({\bf \widetilde{x}},Q_\xi) \to \mathcal{K}_0(\xi)$ \cite{BC19}. Moreover, Brito and Chari gave a  non-recursion for $q$-characters of Hernandez-Leclerc modules in term of the known $q$-characters of initial simple modules \cite[Proposition in Section 2.5]{BC19}. 

In this paper, we use snake graphs to study $q$-characters of Hernandez-Leclec modules. Let $I=\{1,2,\ldots,n\}$. We first give a combinatorial characterization of the highest $\ell$-weight monomials of Hernandez-Leclerc modules. 

\begin{theorem}[{Theorem \ref{HL defintion equivalent}}]
An Hernandez-Leclerc module corresponding to a cluster variable (excluding frozen variables) is a simple $U_{q}(\widehat{\mathfrak{g}})$-module with the highest $\ell$-weight monomial
\begin{align*} 
Y_{i_1,a_1}Y_{i_2,a_2}\ldots Y_{i_k,a_k},
\end{align*}
where $k\in\mathbb{Z}_{\geq 1}$, $i_j\in I$, $a_j\in \mathbb{Z}$ for $j=1,2,\ldots,k$, and
\begin{itemize}
\item[{\rm (i)}] $i_1<i_2<\cdots<i_k$,
\item[{\rm (ii)}] $(a_{j}-a_{j-1})(a_{j+1}-a_{j})<0$ for $2\leq j\leq k-1$,
\item[{\rm (iii)}] $|a_{j}-a_{j-1}|=i_{j}-i_{j-1}+2$ for $2\leq j\leq k$.
\end{itemize}
\end{theorem}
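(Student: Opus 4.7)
The plan is to establish the characterization by identifying the cluster variables of $\mathcal{A}(\widetilde{\mathbf{x}}, Q_\xi)$ bijectively with the tuples $(i_1, a_1),\ldots,(i_k, a_k)$ satisfying (i)-(iii), exploiting the type $A$ structure of $Q_\xi$ together with the snake graph / triangulated polygon combinatorics that the paper introduces.

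First I would recall from [BC19] the explicit initial seed of $\mathcal{A}(\widetilde{\mathbf{x}}, Q_\xi)$: both the frozen and the initial mutable cluster variables correspond to specific fundamental or Kirillov-Reshetikhin modules whose highest $\ell$-weight monomials are single variables $Y_{i, a}$ with $a$ determined by the height function $\xi$. These trivially satisfy (i)-(iii) with $k=1$, the latter two conditions being vacuous.

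Next I would proceed by induction on the mutation distance from the initial seed. Suppose a new cluster variable $x_v'$ is obtained at a vertex $v$ via the exchange relation $x_v x_v' = M_1 + M_2$, where the clusters adjacent to $v$ already have highest $\ell$-weight monomials of the claimed form. Under the isomorphism $\iota$, this relation translates to an equality in $\mathcal{K}_0(\xi)$ whose dominant term yields the highest $\ell$-weight monomial of the new Hernandez-Leclerc module. Using the rigid shape of $Q_\xi$, I would verify that each such mutation alters the tuple associated with the cluster variable either by prepending or appending a factor $Y_{i,a}$ or by deleting one, in a manner that preserves (i)-(iii). Conversely, for any tuple satisfying (i)-(iii), reading the zigzag of $a_j$'s as a sequence of flip moves on diagonals of the triangulated polygon associated with $Q_\xi$ produces an explicit mutation sequence realizing that tuple as a cluster variable, completing the bijection.

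The main obstacle is the precise numerical constraint (iii), $|a_j - a_{j-1}| = i_j - i_{j-1} + 2$. Conditions (i) and (ii) follow naturally from the combinatorics of zigzag paths in the underlying Dynkin diagram, but (iii) enforces a rigid relationship between the shift in the spectral parameter $a$ and the shift in the Dynkin label $i$ at each step along the snake. Establishing this requires an explicit analysis of the effect of each mutation on the spectral parameters, uniform in the height function $\xi$; in particular one must show that exactly the shift $i_j - i_{j-1} + 2$ arises --- never any smaller or larger value --- which amounts to a careful identification of which neighboring tiles in the associated snake graph are glued, and in what orientation, dictated by the height function underlying $Q_\xi$.
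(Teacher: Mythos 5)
Your plan diverges from the paper's proof and contains a genuine gap in the converse direction. The theorem is a statement about the union over \emph{all} height functions $\xi$ of the HL-modules attached to cluster variables, and the paper's proof of the direction ``every monomial satisfying (i)--(iii) is the highest $\ell$-weight of an HL-module'' consists precisely in \emph{constructing} a height function $\xi$ from the data $(i_1,a_1),\ldots,(i_k,a_k)$ --- a piecewise-linear function of slope $\pm1$ whose slope changes sign exactly at the $i_\ell$ and whose value is pinned down by $a_1$ --- and then reading off from Brito--Chari's classification (Theorem \ref{BCTheorem1}) that $L(Y_{i_1,a_1}\cdots Y_{i_k,a_k})$ is $\iota(x[\alpha_{i_1,i_k}])$ in that particular $\mathcal{A}({\bf \widetilde{x}},Q_\xi)$. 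Your proposal instead fixes $Q_\xi$ and asserts that ``for any tuple satisfying (i)--(iii)'' a mutation sequence in that single cluster algebra realizes it. That cannot work: for a fixed $\xi$ the interior indices $i_2<\cdots<i_{k-1}$ are forced to be exactly the sources and sinks of $\xi$ inside $(i_1,i_k)$, and the $a_\ell$ are then determined by $\xi$ up to the choice at $a_1$, so only a thin slice of the tuples satisfying (i)--(iii) occurs in any one $\mathcal{A}({\bf \widetilde{x}},Q_\xi)$ (the paper's remark that the same HL-module corresponds to several $\xi$ points at the same phenomenon). The missing idea is exactly the construction of $\xi$ from the monomial.

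The forward direction of your plan is heavier than necessary and under-justified. The paper does not induct on mutation distance: Theorem \ref{BCTheorem1} already identifies every non-initial, non-frozen cluster variable with $[\omega(i,\overline{k})]$ or with a fundamental highest weight, and $\omega(i,j)$ is defined combinatorially from $\xi$; conditions (i)--(iii), including the rigid shift $|a_j-a_{j-1}|=i_j-i_{j-1}+2$ that you single out as the main obstacle, then follow from the one-line computations $|a_{\ell+1}-a_\ell|=|\xi(i_{\ell+1})-\xi(i_\ell)+2|=i_{\ell+1}-i_\ell+2$ using only that $\xi$ has slope $\pm1$ and alternates monotonicity between consecutive sources/sinks. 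If you insist on tracking mutations you would additionally have to justify (a) that the highest $\ell$-weight of the new simple factor is the ``dominant term'' of the image of the exchange relation in $\mathcal{K}_0(\xi)$, and (b) that a general exchange relation in the exchange graph --- not just the special sequence $i,i+1,\ldots,j$ --- only prepends, appends, or deletes a single factor $Y_{i,a}$; neither is automatic, and (b) is not how arbitrary mutations behave.
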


\subsection{Hernandez-Leclerc modules, snake graphs, and $q$-characters}
In \cite{MY12a,MY12b}, Mukhin and Young gave a purely combinatorial $q$-character formula for snake modules of types $A_n$ and $B_n$ via path descriptions. A completely different approach to compute $q$-characters of Kirillov-Reshetikhin modules was  developed by Hernandez and Leclerc \cite{HL16}. In fact, Hernandez and Leclerc proposed a geometric $q$-character formula conjecture for real simple modules, which implies that the truncated $q$-character formula will play an important role similar to the cluster character formula. Recently, Duan and Schiffler in \cite{DS20} proved the geometric $q$-character formula for snake modules of types $A_n$ and $B_n$, and characterized the associated general kernel. In \cite{CDFL20}, the authors gave an explicit $q$-character formula of simple $U_q(\widehat{\mathfrak{sl}}_n)$-modules. The formula involves Kazhdan-Lusztig polynomials \cite{KL79} which are hard to compute when the length of the highest $\ell$-weight monomial of a simple module is large.

Our aim is to give a non-recursive formula for $q$-characters of Hernandez-Leclec modules using snake graphs. In \cite{FST08}, Fomin, Shapiro, and Thurston introduced an important class of cluster algebras from surfaces with or without punctures. Each cluster variable (cluster) corresponds to a tagged arc (ideal triangulation) in the surface. In \cite{MSW11,MSW13}, Musiker, Schiffler, and Williams constructed a combinatorial object associated to a non-initial arc, called a labeled snake graph, to compute the Laurent expansion of any non-initial cluster variable in a cluster algebra from a surface. Later Canakci and Schiffler studied abstract snake graphs in a series of papers \cite{CS13,CS15,CS19} and established interesting connections among cluster variables, snake graphs, and continued fractions \cite{CS18,CS20}. In particular, identities in the cluster algebra have been expressed in terms of snake graphs. Rabideau used continued fractions to give a combinatorial formula for $F$-polynomials \cite{Rab18} in cluster algebras with principal coefficients from surfaces. Rabideau and Schiffler proved the constant numerator conjecture for Markov numbers via snake graphs and continued fractions \cite{RS20}.

Given a height function $\xi$, we construct a unique labeled snake graph for a non-initial Hernandez-Leclerc module. Our idea is summarized by the diagram in Figure \ref{four correspondences}. 
\begin{figure}
\centerline{
\xymatrix{
\text{Non-initial cluster variables } \ar@{<->}[rr]^-{ \substack{ \text{Fomin, Shapiro, and } \\  \text{Thurston \cite{FST08}}}}  && \text{Non-initial arcs (up to isotopic)}  \ar@{->}[d]^-{ \substack{\text{Musiker, Schiffler, and}  \\ \text{Williams \cite{MSW11} }}}   \\
\text{Non-initial Hernandez-Leclerc modules} \ar@{<->}[u]^-{\text{Brito and Chari \cite{BC19}}}  \ar@{->}[rr]^-{\substack{\text{Theorem \ref{theorem between HL-modules and snake graphs}}}}  && \text{Labeled snake graphs}  \\
} }
\caption{Cluster variables, arcs, Hernandez-Leclerc modules, and labeled snake graphs.}\label{four correspondences}
\end{figure}

Fix a simple root system $\{\alpha_i\mid i\in I\}$ of type $A_n$, let $\alpha_{i,j}=\alpha_i + \cdots + \alpha_j$ be a positive root for $1\leq i\leq j\leq n$. Then the set of all cluster variables in $\mathcal{A}({\bf \widetilde{x}},Q_\xi)$ is $\{ x[-\alpha_i] \mid i\in I\} \cup \{x[\alpha_{i,j}] \mid 1\leq i\leq j\leq n\}$. Denote by ${\rm Match}(\mathcal{G})$ the set of all perfect matchings of a snake graph $\mathcal{G}$. For any $P\in {\rm Match}(\mathcal{G})$, let $x(P)$ (respectively, $y(P)$) be the weight  (respectively, height) monomial in the sense of Musiker, Schiffler, and Williams \cite{MSW11,MSW13}. 

By applying the theory of cluster algebras, we give a $q$-character formula of an arbitrary Hernandez-Leclerc module corresponding to a non-initial cluster variable (excluding frozen variables)  by perfect matchings of snake graphs.

\begin{theorem}[{Theorems \ref{snake graph formula} and \ref{theorem313}}] \label{theorem between HL-modules and snake graphs}
Let $\mathcal{G}$ be the labeled snake graph associated to $x[\alpha_{i,j}]$. Then
\begin{align*}
x[\alpha_{i,j}]  = \frac{1}{ \prod_{\ell=i}^j x_\ell}  \left(\frac{\sum_{P\in {\rm Match}(\mathcal{G})}  x(P) y(P) }{\bigoplus_{P\in {\rm Match}(\mathcal{G})} y(P)}\right),
\end{align*}
where the sign $\oplus$ appearing in the denominator refers to the addition of a tropical semifield. In particular, 
\begin{align*}
\chi_q(\iota(x[\alpha_{i,j}])) = \frac{1}{ \prod_{\ell=i}^j \chi_q(\iota(x_\ell))} \chi_q\left(\iota(\frac{\sum_{P\in {\rm Match}(\mathcal{G})}  x(P) y(P) }{\bigoplus_{P\in {\rm Match}(\mathcal{G})}  y(P)})\right).
\end{align*}
There is a unique perfect matching $P\in {\rm Match}(\mathcal{G})$ such that the highest or lowest $\ell$-weight monomial in $\chi_q(\iota(x[\alpha_{i,j}]))$ occurs in 
\begin{align*}
\frac{1}{ \prod_{\ell=i}^j \chi_q(\iota(x_\ell))} \chi_q\left(\iota(\frac{x(P) y(P) }{\bigoplus_{P\in {\rm Match}(\mathcal{G})} y(P)})\right).
\end{align*}
\end{theorem}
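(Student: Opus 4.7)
The plan is to combine the Musiker--Schiffler--Williams (MSW) expansion formula for cluster algebras from unpunctured surfaces with the monoidal categorification of $\mathscr{C}_\xi$ from \cite{BC19}. The first move is to realise $\mathcal{A}(\tilde{\mathbf{x}}, Q_\xi)$ as the cluster algebra of a polygon equipped with the triangulation dictated by the height function $\xi$, and to identify the cluster variable $x[\alpha_{i,j}]$ with a specific tagged arc $\gamma_{i,j}$ as encoded in the right column of Figure \ref{four correspondences}. The snake graph $\mathcal{G}$ in the theorem is then, by construction, the MSW snake graph associated to $\gamma_{i,j}$.

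For the first formula, the MSW theorem immediately yields
\[
x[\alpha_{i,j}] \;=\; \frac{1}{\operatorname{cross}(\gamma_{i,j})} \cdot \frac{\sum_{P\in {\rm Match}(\mathcal{G})} x(P)y(P)}{\bigoplus_{P\in {\rm Match}(\mathcal{G})} y(P)},
\]
where $\operatorname{cross}(\gamma_{i,j})$ is the product of initial cluster variables labelling the diagonals of the triangulation crossed by $\gamma_{i,j}$. The only substantive step for this part is the combinatorial identification of the crossing monomial: I would show, directly from the height function $\xi$ and the Brito--Chari quiver $Q_\xi$, together with Theorem \ref{HL defintion equivalent} which pins down the indices $i_1<\cdots<i_k$ in the interval $[i,j]$, that $\gamma_{i,j}$ crosses exactly the diagonals labelled $i,i+1,\dots,j$. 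This yields $\operatorname{cross}(\gamma_{i,j}) = \prod_{\ell=i}^{j} x_\ell$.

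For the $q$-character identity, I would apply the ring map $\chi_q\circ\iota$ to the first formula. Since $\iota$ is a ring isomorphism by \cite{BC19} and $\chi_q$ is an injective ring homomorphism on $\mathcal{K}_0(\xi)$, the cluster-algebraic identity transports term-by-term once the tropical denominator is cleared by multiplying through by $\bigoplus_P y(P)$. The resulting polynomial identity then pushes forward to an identity among $q$-characters, which rearranges to the stated quotient after dividing by $\prod_{\ell=i}^{j}\chi_q(\iota(x_\ell))$.

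The last assertion, on the uniqueness of the extremal perfect matching, is where I expect the main obstacle. It is classical that $\mathcal{G}$ carries two distinguished matchings $P_+$ and $P_-$ that achieve the maximum and minimum of the $y(P)$'s in the dominance order. Using Theorem \ref{HL defintion equivalent} I would write down the candidate highest $\ell$-weight monomial $Y_{i_1,a_1}\cdots Y_{i_k,a_k}$ explicitly, and then, via the snake-graph $F$-polynomial formula, compute the contribution of each $P$ to $\chi_q(\iota(x[\alpha_{i,j}]))$. The remaining work is to prove that for every $P\neq P_\pm$ the $\ell$-weight monomial contributed is strictly lower (respectively strictly higher) than the one produced by $P_\pm$, and that $P_\pm$ contributes the extremal monomial with coefficient one. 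I would obtain this by tracking the monomial order on products of the $Y_{i,a}^{\pm 1}$ under the local moves (twists) that relate adjacent perfect matchings in $\mathcal{G}$. The delicate point is to rule out the possibility that cancellations coming from the summation over all matchings could promote a non-extremal $P$ to leading order; this will require a careful sign and weight analysis along the snake graph, combined with the fact that $P_\pm$ are the unique matchings not connected to any other by a single twist on the extremal side.
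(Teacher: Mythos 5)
Your treatment of the first two assertions is essentially the paper's own proof: the expansion formula is \cite[Theorem 4.10]{MSW11} combined with the separation of additions \cite[Theorem 3.7]{FZ07}, the crossing monomial is $\prod_{\ell=i}^j x_\ell$ by the construction of $\mathcal{G}$ from the full subquiver on $\{i,\dots,j\}$, and the $q$-character identity is obtained by pushing forward along the ring isomorphism $\iota$ of \cite{BC19}. No issues there.

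The third assertion is where your plan breaks down, and the gap is not the "delicate cancellation" issue you flag but the identification of the distinguished matching itself. You assume the extremal $\ell$-weight monomial is produced by the minimal or maximal matching $P_\pm$ of $\mathcal{G}$. That is false in general. In the paper's Example \ref{exmaple39} (the module $L(Y_{1,-3}Y_{3,-7}Y_{6,-2}Y_{8,-6})$ with $x[\alpha_{1,7}]$), the term containing the highest and lowest $\ell$-weight monomials is $x_8x'_1x'_3x'_6/(x_1x_3x_6)$, which comes from a matching that is neither $P_-$ nor $P_+$ but a \emph{mixed} matching, built by choosing on each zigzag piece $\mathcal{H}_t$ of the sign-decomposition of $\mathcal{G}$ either the restriction of $P_-$ or of $P_+$ according to whether the arrow at the vertex $i+\ell_{t-1}$ points toward or away from its frozen partner in $Q_\xi$ (Lemmas \ref{the lemma310} and \ref{lemma312}). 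The reason $P_\pm$ cannot work is that the extremality relevant here is not the dominance order on the $y(P)$ but extremality of $\ell$-weight monomials inside $\chi_q$: the term that carries the highest $\ell$-weight monomial must be of the form $x_{j+1}^{*}\prod_t x'_{i+\ell_{t-1}}/\prod_t x_{i+\ell_{t-1}}$, because $\iota(x'_p)/\iota(x_p)=[L(Y_{p,\xi(p)-1}Y_{p,\xi(p)+1})]/[L(Y_{p,\xi(p+1)})]$ is exactly what produces a single factor $Y_{p,a}$ at the top, and by \cite[Corollary 6.9]{FM01} the lowest $\ell$-weight monomial is then forced as well. So the correct strategy is to compute, piece by piece on the $\mathcal{H}_t$, which matching yields that specific Laurent monomial (this is the content of Lemma \ref{lemma312}, using the evaluation $F|_{\mathbb{P}}=y(P_+)|_{\mathbb{P}}$ from Lemma \ref{Lemma36}), and then invoke the uniqueness of the highest and lowest $\ell$-weight monomials of a $q$-character to conclude. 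Your proposed induction on twists ordered by the matching poset, anchored at $P_\pm$, would identify the wrong matching and cannot be repaired without replacing $P_\pm$ by the gluing construction above.
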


In \cite[Proposition in Section 2.5]{BC19}, Brito and Chari gave a non-recursive formula of $x[\alpha_{i,j}]$ using two sets  $\Gamma_{i,j}$ and $\Gamma'_{i,j}$. The set $\Gamma_{i,j}$ consists of $(0,1)$ sequences with $(j-i+2)$ length subject to four conditions, and  $\Gamma'_{i,j}$ is determined by $\Gamma_{i,j}$ consisting of $(-1,0,1)$ sequences with the same length, see Section \ref{relation Brito-Chari result}.

On the other hand, we associate to $x[\alpha_{i,j}]$ perfect matchings of the corresponding labeled snake graph. Combining with Rabideau's formula for $F$-polynomials \cite[Theorem 3.4]{Rab18}, Theorem \ref{dual case of F-polynomial} gives an explicit formula for $F$-polynomials. Moreover, we give the evaluation of $F$-polynomials at a tropical semifield, see Lemma \ref{Lemma36}. 

\subsection{A recursive formula for Hernandez-Leclerc modules}

We give a new recursive formula for Hernandez-Leclerc modules by an induction on the length of the highest $\ell$-weight monomials of Hernandez-Leclerc modules.

By Brito and Chari's results \cite[Theorem 1, Corollary in Section 1.3]{BC19}, the class of each Hernandez-Leclerc module corresponds to a cluster variable $x[\alpha_{i,j}]$, where $j-1$ is a source or sink, see the paragraph after Remark \ref{note of our formula}. 
\begin{theorem} [{Theorem \ref{theorem41}}]
Let $j\in I$ be a source or sink vertex and $j>i\in I$. Then 
\begin{multline}\label{formula for HL by length}
x[\alpha_{i,j}] x[\alpha_{j+1,j+1}] =  x[\alpha_{i,j+1}] \\
+ x[\alpha_{i,\max\{i-1,j_\bullet-1\}}]^{1-\delta_{i,j_\bullet}} {x'}_{\max\{i, j_\bullet\}}^{ \min\{1,(1-\delta_{j_\bullet,i_\bullet})d_{j_\bullet-1}+\delta_{j_\bullet,i}\}} x[-\alpha_{j+2}]^{d_{j+1}} {x'}^{1-d_{j+1}}_{j+2},
\end{multline}
where $\delta_{i,j}$ is the Kronecker symbol, $d_j=\delta_{j,j_\diamond}$, and $j_\bullet$ is the maximum source or sink vertex strictly less than $j$ in $Q_\xi$.
\end{theorem}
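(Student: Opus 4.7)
The plan is to prove (\ref{formula for HL by length}) by translating each side into the snake-graph language of Theorem \ref{theorem between HL-modules and snake graphs} and then establishing a matching-level combinatorial identity. Observe first that $x[\alpha_{j+1,j+1}] = x_{j+1}$ is an initial cluster variable, so the left hand side is the product $x[\alpha_{i,j}]\cdot x_{j+1}$; this is exactly the ``arc-plus-endpoint'' configuration whose skein resolution produces $x[\alpha_{i,j+1}]$ together with a shorter arc, which should be (\ref{formula for HL by length}).

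First, I would identify the labeled snake graphs involved. Let $\mathcal{G}_{i,j}$ and $\mathcal{G}_{i,j+1}$ denote the labeled snake graphs of $x[\alpha_{i,j}]$ and $x[\alpha_{i,j+1}]$, and let $\mathcal{G}^*$ denote the snake graph of $x[\alpha_{i,\max\{i-1,j_\bullet-1\}}]$ on the right (which collapses to the empty graph when $i=j_\bullet$ or $j_\bullet<i$). Because $j$ is a source or sink of $Q_\xi$, the zig-zag pattern of the height function $\xi$ forces $\mathcal{G}_{i,j+1}$ to be obtained from $\mathcal{G}_{i,j}$ by gluing a single extra tile along the last diagonal edge, and $\mathcal{G}^*$ to be the truncation of $\mathcal{G}_{i,j}$ determined by the previous source-or-sink vertex $j_\bullet$.

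Second, I would apply Theorem \ref{snake graph formula} to rewrite every non-initial cluster variable in (\ref{formula for HL by length}) as a normalized perfect matching sum of its snake graph. After clearing the weight denominators $\prod_{\ell} x_\ell$, the weight part of (\ref{formula for HL by length}) reduces to the standard snake-graph grafting recursion: perfect matchings of $\mathcal{G}_{i,j+1}$ split according to whether the east edge of the newly appended tile is matched. The matchings using this edge biject with ${\rm Match}(\mathcal{G}_{i,j})$, contributing $x[\alpha_{i,j}]\cdot x_{j+1}$ to the left, while the remaining matchings are forced to match both opposite interior edges of the new tile and biject with matchings of $\mathcal{G}^*$, contributing the second summand on the right. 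For the height (coefficient) part I would combine Theorem \ref{dual case of F-polynomial} with Lemma \ref{Lemma36}: the $F$-polynomials satisfy a continued-fraction recursion of the form $F(\mathcal{G}_{i,j+1}) = F(\mathcal{G}_{i,j}) + y_{\mathrm{new}}\, F(\mathcal{G}^*)$, and tropical evaluation of the denominators $\bigoplus_P y(P)$ then produces exactly the initial cluster monomials $x'_{\max\{i,j_\bullet\}}$ and $x'_{j+2}$ (or the frozen $x[-\alpha_{j+2}]$ when $d_{j+1}=1$) appearing in (\ref{formula for HL by length}).

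The main obstacle I expect is the case analysis encoded by the Kronecker symbols on the right of (\ref{formula for HL by length}). A single uniform formula must simultaneously handle whether $i=j_\bullet$ (so $\mathcal{G}^*$ collapses), whether $j_\bullet=i_\bullet$ (affecting the exponent of $x'_{\max\{i,j_\bullet\}}$), and whether $j+1$ is a diamond vertex of $\xi$ (switching between $x[-\alpha_{j+2}]$ and $x'_{j+2}$ via $d_{j+1}$). While each individual sub-case reduces transparently to the grafting recursion for snake graphs, verifying that the uniform expression yields the correct weight and height exponents across every sub-case is the bulk of the computation.
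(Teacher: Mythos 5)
Your argument breaks down at the very first step: $x[\alpha_{j+1,j+1}]$ is \emph{not} the initial cluster variable $x_{j+1}$. In the notation of Theorem \ref{cluster variables bijection with almost positive roots}, the initial variables are $x_\ell=x[-\alpha_\ell]$, while $x[\alpha_{j+1,j+1}]=x[\alpha_{j+1}]$ is the non-initial variable with denominator $x_{j+1}$, obtained by one mutation at $j+1$; under $\iota$ it is a fundamental module such as $L(Y_{j+1,\xi(j+2)\pm2})$, not $L(Y_{j+1,\xi(j+2)})$ (compare the paper's final example, where $x[\alpha_{3,3}]\mapsto[L(Y_{3,-7})]$ but $x_3\mapsto[L(Y_{3,-5})]$). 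This is not a cosmetic slip: it makes your matching-level identity numerically false. Specializing all initial and frozen variables to $1$, each side of (\ref{formula for HL by length}) counts perfect matchings, and the left side is $|{\rm Match}(\mathcal{G}_{i,j})|\cdot 2$ (the single tile of $\mathcal{G}_{j+1,j+1}$ contributes a factor $2$), which is consistent with $|{\rm Match}(\mathcal{G}_{i,j+1})|+|{\rm Match}(\mathcal{G}^*)|$ via $|\Gamma_{i,j}|=|\Gamma_{i,j-1}|+|\Gamma_{i,j_\bullet-1}|$; your version, with the factor $x_{j+1}$ contributing $1$, would force $|{\rm Match}(\mathcal{G}_{i,j})|=|{\rm Match}(\mathcal{G}_{i,j+1})|+|{\rm Match}(\mathcal{G}^*)|$, which fails already for $x[\alpha_{1,2}]x[\alpha_{3,3}]=x[\alpha_{1,3}]+x_1'x_4$ ($3\ne 5+1$, whereas $6=5+1$). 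Accordingly, ``matchings of $\mathcal{G}_{i,j+1}$ using the east edge of the new tile biject with ${\rm Match}(\mathcal{G}_{i,j})$'' cannot produce the left-hand side, and the claimed $F$-polynomial recursion $F(\mathcal{G}_{i,j+1})=F(\mathcal{G}_{i,j})+y_{\mathrm{new}}F(\mathcal{G}^*)$ has the wrong shape for the same reason.

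A snake-graph route is not impossible in principle: $x[\alpha_{i,j}]$ and $x[\alpha_{j+1,j+1}]$ are exchangeable (their arcs cross once), so (\ref{formula for HL by length}) is a Ptolemy/skein relation that could be resolved with the crossing-snake-graph calculus of Canakci--Schiffler; but that is substantially heavier machinery and is not what the paper does. The paper's proof is a direct quiver computation: using the Lemma in Section 2.2 of \cite{BC19} to track the arrows at $j$ and $j+1$ after mutating $i,i+1,\dots,j-1$, it mutates at $j$, then $j+1$, then $j$ again, and identifies (\ref{formula for HL by length}) as the exchange relation of this last mutation, recognizing each participating cluster variable through its denominator vector via Theorem \ref{cluster variables bijection with almost positive roots}. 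If you want to keep a combinatorial flavor, you should restructure your argument around that exchange relation rather than around appending a tile to $\mathcal{G}_{i,j}$.
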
 
Comparing with a recursive formula given by Brito and Chari in \cite[Proposition in Section 1.5]{BC19}, our recursive formula needs one more step of mutation for a height function $\xi$ such that $j$ is source or sink, see Remark \ref{note of our formula}~(2).

The formula (\ref{formula for HL by length}) provides a possibility of a combinatorial path formula in which we allow overlapping paths, generalizing Mukhin and Young's path formula for snake modules \cite{MY12a,MY12b}.

The content of this paper is outlined as follows. In Section \ref{preface}, we recall some background on cluster algebras of geometric type, Hernandez-Leclerc modules, snake graphs, and $F$-polynomials. In Section \ref{The definition of HL-modules}, we characterize the highest $\ell$-weight monomials of Hernandez-Leclerc modules and give a $q$-character formula in term of perfect matchings of snake graphs. In Section \ref{a recursive formula}, a new recursive formula for $q$-characters of Hernandez-Leclerc modules is introduced.

\section{Preliminary} \label{preface}
\subsection{Cluster algebras}
We recall the definition of cluster algebras \cite{FZ02,FZ07}. 

Let $\mathcal{F}$ be the field of rational functions in $n$ independent variables over $\mathbb{Q}\mathbb{P}$, where $\mathbb{P}$ is a semifield. For $u_1,u_2,\ldots,u_r$ in $\mathbb{P}$, we denote by $F|_{\mathbb{P}}(u_1,\ldots,u_r)$ the evaluation of a subtraction-free rational expression $F$ at $u_1,\ldots,u_r$. 

A \textit{tropical semifiled} $(\mathbb{P}, \oplus, \cdot)$, where $\mathbb{P}=\text{Trop}(u_1,u_2,\ldots,u_r)$, is an abelian multiplicative group freely generated by $u_1,u_2,\ldots,u_r$, the (auxiliary) addition $\oplus$ is defined by
\[
\prod_{j} u^{a_j}_j \oplus \prod_{j} u^{b_j}_j = \prod_{j} u^{\min(a_j,b_j)}_j.
\]
An important choice for coefficients or frozen variables in a cluster algebra is the tropical semifield, in this case, the corresponding cluster algebra is said to be of \textit{geometric type}. In this paper, we pay attention to  skew-symmetric cluster algebras of geometric type. 

Let $Q$ be a finite quiver without loops or 2-cycles. Assume without loss of generality that the vertex set of $Q$ is $\{1,\ldots,m\}$, among vertices $1,\ldots,n$ ($n\leq m$) are mutable vertices and vertices $n+1,\ldots,m$ are frozen vertices. Mutating $Q$ at a mutable vertex $k$,  one obtains a new quiver $\mu_k(Q)$ defined as follows: 
\begin{itemize}
\item[(i)] add a new arrow $i \to j$ for each pair of arrows $i \to k \to j$, excluding that both $i$ and $j$ are frozen vertices;
\item[(ii)] reverse the orientation of each arrow incident to $k$; and 
\item[(iii)] remove all maximal pairwise disjoint 2-cycles.
\end{itemize}

\begin{definition}(Seeds)
A seed in $\mathcal{F}$ is a pair $({\bf \widetilde{x}},Q)$, where 
\begin{itemize}
\item[(i)] ${\bf \widetilde{x}}=\{x_1,\ldots,x_n, x_{n+1},\ldots,x_m\}$ is a free generating set of $\mathcal{F}$, called an \textit{extended cluster}. The subset ${\bf x}=\{x_1,\ldots,x_n\}$ is called a \textit{cluster} and each element in ${\bf x}$ is called a \textit{cluster variable}; $\{x_{n+1},\ldots,x_m\}$ is a set of elements in $\mathbb{P}$, called a \textit{coefficient tuple}, among them each element is called a \textit{coefficient} or \textit{frozen variable}; and
\item[(ii)] Q is a quiver as above. 
\end{itemize}
\end{definition}


\begin{definition}(Seed mutations).
Let $({\bf \widetilde{x}},Q)$ be a seed. The seed mutation $\mu_k$ in direction $k\in \{1,\ldots,n\}$ transforms $({\bf \widetilde{x}},Q)$ into the seed $({\bf \widetilde{x}'},Q')$ defined as follows.
\begin{itemize}
\item[(i)] The extended cluster ${\bf \widetilde{x}'}=\{x'_1,\ldots,x'_m\}$ is defined by $x'_j=x_j$ for $j\neq k$, whereas $x'_k$ is defined by the \textit{exchange relation}
\[
x'_k= \cfrac{ \prod\limits_{i: i\to k} x_i + \prod\limits_{j: k \to j} x_j }{x_k};
\]
\item[(ii)] $Q'=\mu_k(Q)$ defined as above.
\end{itemize}
\end{definition}

A \textit{cluster algebra} is a $\mathbb{ZP}$-subalgebra of $\mathcal{F}$ generated by all cluster variables obtained from seed mutations.

Cluster algebras of finite type are classified by the Dynkin diagrams \cite{FZ03}, that is, there is an exchange matrix such that the Cartan counterpart of its principle part is one of Cartan matrices of finite type. Under this correspondence, cluster variables in a cluster algebra of finite type are in bijection with almost positive roots in the root system associated to the corresponding Cartan matrix.

\begin{theorem}[{\cite[Theorem 1.9]{FZ03}}] \label{cluster variables bijection with almost positive roots}
In a cluster algebra of finite type, the cluster variable $x[\alpha]$ for $\alpha=\sum_{i\in I} a_i \alpha_i$ is expressed in term of the initial cluster ${\bf x}_0$ as 
\[
x[\alpha]= \frac{P_\alpha({\bf x}_0)}{\prod_{i\in I} x^{a_i}_i},
\]
where $P_\alpha$ is a polynomial over $\mathbb{ZP}$ with nonzero constant term. In particular, $x[-\alpha_i]=x_i$.
\end{theorem}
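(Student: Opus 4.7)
The plan is to combine the Laurent Phenomenon with a careful analysis of denominator vectors and the root-system structure of finite-type cluster algebras. First, I would invoke the Laurent Phenomenon: every cluster variable $x[\alpha]$ admits a Laurent expansion $x[\alpha]=P({\bf x}_0)/\prod_{i\in I} x_i^{c_i}$, where $P$ is a polynomial over $\mathbb{ZP}$ not divisible by any $x_i$, so the denominator vector $\mathbf{d}(x[\alpha]):=(c_1,\ldots,c_n)\in\mathbb{Z}^n$ is well defined. The base case is immediate: $x[-\alpha_i]=x_i$ and therefore $\mathbf{d}(x[-\alpha_i])=-\mathbf{e}_i$, matching the claim for the negative simple roots.

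Next, I would track how $\mathbf{d}$-vectors behave under mutation. Because the exchange relation
\[
x_k' = \frac{\prod_{i \to k} x_i + \prod_{k \to j} x_j}{x_k}
\]
is a subtraction-free rational expression, mutation acts on denominator vectors by a piecewise-linear (tropicalized) transformation whose linear pieces are determined by the exchange matrix of the current seed. In finite type, the Dynkin-shaped exchange matrix forces these tropical mutations to mirror the simple reflections $s_i$ acting on the root lattice. An induction on the length of a mutation sequence from the initial seed then shows that the $\mathbf{d}$-vector of any non-initial cluster variable, written in the simple-root basis, is a positive root, and that every positive root is realized in this way.

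The main obstacle is establishing bijectivity together with the non-vanishing of the constant term of $P_\alpha$. For injectivity, if two cluster variables shared the same denominator vector then applying a suitable common mutation sequence would force them to coincide, contradicting that distinct cluster variables live in distinct seeds. For surjectivity, I would appeal to the finite-type classification of \cite{FZ03}: the cluster complex is combinatorially the generalized associahedron, whose vertex set is canonically indexed by the almost positive roots $\Phi_{\geq -1}$, giving the required counting match between cluster variables and $\Phi_{\geq -1}$. Finally, the assertion that $P_\alpha$ has a nonzero constant term can be deduced via $F$-polynomials: for principal coefficients one has $x[\alpha]=F_\alpha(\hat{y})\cdot\prod x_i^{g_i}$ with $F_\alpha$ of constant term $1$, and the separation-of-additions formula transfers this property to arbitrary coefficient systems. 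The most delicate point throughout is the precise alignment between the tropical mutation action on $\mathbb{Z}^n$ and the Weyl group action on the root lattice; this is the essence of the denominator-vector parametrization originally established by Fomin and Zelevinsky.
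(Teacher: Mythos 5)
The paper offers no proof of this statement: it is quoted directly from \cite[Theorem 1.9]{FZ03}, so there is no internal argument to compare against, and your proposal has to be judged on its own. Its overall philosophy (denominator vectors, root-lattice combinatorics, $F$-polynomials for the constant term) is the right one, but two steps as written are genuine gaps rather than details.

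The pivotal gap is the sentence claiming that ``the Dynkin-shaped exchange matrix forces these tropical mutations to mirror the simple reflections $s_i$.'' Even granting that denominator vectors transform by the tropicalized exchange relation --- which is itself not automatic from subtraction-freeness, since one must exclude cancellation between the contributions of the two monomials in the exchange relation, and this requires positivity or sign-coherence input --- the resulting piecewise-linear map agrees with a simple reflection only when one mutates at a source or a sink of a bipartite orientation. For an arbitrary mutation step it does not, so your induction on the length of an arbitrary mutation sequence from the initial seed breaks at its first nontrivial case. Fomin and Zelevinsky's actual argument avoids this by working only along the bipartite belt, encoding the source/sink mutations in the piecewise-linear involutions $\tau_+$, $\tau_-$ on $\Phi_{\geq -1}$, and then invoking the finite-type fact that every cluster variable already appears in some seed on that belt; some such restriction (or a proof of the general denominator-vector recurrence in finite type) is indispensable.

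The second gap is the injectivity argument. The assertion that ``distinct cluster variables live in distinct seeds'' is false --- every cluster contains $n$ distinct cluster variables --- and ``applying a suitable common mutation sequence would force them to coincide'' does not identify any actual mechanism. Injectivity of the denominator-vector map on cluster variables in finite type is a substantive theorem and needs its own proof (in \cite{FZ03} it comes out of the $\tau_\pm$-analysis together with the compatibility-degree machinery). The remaining ingredients of your sketch are acceptable: surjectivity via the generalized-associahedron count is legitimate once injectivity into $\Phi_{\geq -1}$ is known, and deducing the nonzero constant term of $P_\alpha$ from the constant term $1$ of finite-type $F$-polynomials together with the separation formula of \cite{FZ07} is a valid (if anachronistic) route.
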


In order to deal with coefficients or frozen variables, we need the following theorem from \cite{FZ07}.  
\begin{theorem}[{\cite[Theorem 3.7]{FZ07}}] \label{FZ07thm3.7}
Let $\mathcal{A}$ be a cluster algebra over an arbitrary semifield $\mathbb{P}$, with a seed at an initial extended cluster $\{x_1,\ldots, x_n,y_1,\ldots, y_n\}$. Then the cluster variables in $\mathcal{A}$ can be expressed as follows:
\[
x_\ell = \frac{ X_\ell |_{\mathcal{F}}(x_1,\ldots, x_n,y_1,\ldots, y_n)}{F_\ell |_{\mathbb{P}}(y_1,\ldots,y_n)},
\]
where $X_\ell$ is the Laurent expression of $x_\ell$ in the case of the principle coefficient, and $F_\ell$ is the specilization of $X_\ell$ evaluating at $x_1=\cdots=x_n=1$, called $F$-polynomial. 
\end{theorem}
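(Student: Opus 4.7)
The plan is to follow the separation-of-additions strategy. First I would reduce to the universal setting of principal coefficients, where $\mathbb{P} = \mathrm{Trop}(y_1,\ldots,y_n)$ and the initial coefficient tuple is exactly $(y_1,\ldots,y_n)$. In this setting, the Laurent phenomenon produces an expression $X_\ell \in \mathbb{Z}[x_1^{\pm 1},\ldots,x_n^{\pm 1}][y_1,\ldots,y_n]$ for every cluster variable (Laurent in the $x_i$, polynomial in the $y_j$), and $F_\ell$ is defined by the specialization $x_1=\cdots=x_n=1$. The reason to reduce to this universal case is that any cluster algebra $\mathcal{A}$ over an arbitrary semifield $\mathbb{P}'$ arises from $\mathcal{A}_\bullet$ by evaluating the formal generators $y_j$ at prescribed elements of $\mathbb{P}'$ and re-interpreting sums via the semifield structure, so a universal formula in $\mathcal{A}_\bullet$ yields a formula in $\mathcal{A}$.

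Next I would run an induction on the length $N$ of a mutation sequence from the initial seed. The base case $N=0$ is immediate: each initial cluster variable gives $X_\ell=x_\ell$ and $F_\ell=1$. For the inductive step, mutation in direction $k$ yields
\[
x_k' \;=\; \frac{p_k^{+}\prod_{i\to k} x_i + p_k^{-}\prod_{k\to j} x_j}{(p_k^{+}\oplus p_k^{-})\,x_k},
\]
where $p_k^{\pm}\in\mathbb{P}'$ are obtained by iterated tropical coefficient mutation starting from $(y_1,\ldots,y_n)$. In parallel, in $\mathcal{A}_\bullet$ the same mutation produces $X_{k'} = (X_k)^{-1}(M_+ + M_-)$ where $M_{\pm}$ are monomials in the $x_i$'s and the $y_j$'s, and specializing $x_i=1$ gives $F_{k'} = (F_k)^{-1}(m_+ + m_-)$ with $m_\pm = M_\pm|_{x_i=1}$. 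Combining these with the inductive formula $x_k = X_k|_{\mathcal{F}}/F_k|_{\mathbb{P}'}$ and with the corresponding formulas for each $x_i$ in the current cluster, one massages the exchange relation into the desired ratio expression for $x_k'$.

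The main technical point, and the expected principal obstacle, is the simultaneous bookkeeping of two distinct addition structures: ordinary addition in $\mathcal{F}$ for the numerator $X_\ell|_{\mathcal{F}}$, and the semifield addition $\oplus$ in $\mathbb{P}'$ for the denominator $F_\ell|_{\mathbb{P}'}$. Concretely, one needs to verify the compatibility identities
\[
p_k^{+}\oplus p_k^{-} \;=\; \left.\frac{m_+ + m_-}{F_k}\right|_{\mathbb{P}'}, \qquad \frac{p_k^{\pm}}{p_k^{+}\oplus p_k^{-}} \;=\; \left.\frac{m_{\pm}}{m_+ + m_-}\right|_{\mathbb{P}'},
\]
which encode that the tropical mutation of coefficients in $\mathbb{P}'$ is precisely the image, under $y_j\mapsto p_j$, of the tropical mutation inside $\mathrm{Trop}(y_1,\ldots,y_n)$, and that the normalization by $F_\ell|_{\mathbb{P}'}$ is what cancels the discrepancy between ordinary and auxiliary addition. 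Establishing these identities requires tracking the structure of $X_\ell$ as a sum of monomials indexed by matchings or related combinatorial objects and verifying that their tropical contributions collapse correctly under $\oplus$. Once these identities are in hand, the inductive step closes and the theorem follows.
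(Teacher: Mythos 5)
This statement is quoted verbatim from Fomin--Zelevinsky \cite{FZ07} (their Theorem 3.7, the ``separation of additions''), and the paper offers no proof of its own, so there is nothing internal to compare against. Your sketch follows the strategy of the original proof in \cite{FZ07}: reduction to the universal principal-coefficient pattern, induction along the exchange graph, and the compatibility between the tropical mutation of coefficients in $\mathbb{P}$ and the specialization of the $F$-polynomials. The outline is correct in shape, but be aware that the two ``compatibility identities'' you defer to the end are not a routine verification --- they are essentially Propositions 3.9 and 3.13 of \cite{FZ07} (the statement that the coefficients at any seed are obtained by evaluating the $\hat{y}$-dynamics, which in turn are expressed through the $F$-polynomials and the exchange matrices), and proving them is the real content of the theorem; also note that $F_\ell$ having a well-defined evaluation in an arbitrary semifield requires knowing it is a genuine polynomial (polynomiality in the $y_j$, i.e.\ Laurentness plus positivity of the coefficient degrees), which should be flagged as an input rather than assumed silently.
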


\subsection{Category $\mathscr{C}_\xi$ and Hernandez-Leclerc modules}\label{subcategory and Hernandez-Leclerc modules}
Let $I=\{1,2,\ldots,n\}$. Following \cite{BC19}, given a height function $\xi: I \to\mathbb{Z}$ with $|\xi(i)-\xi(i+1)|=1$ for $1\leq i\leq n-1$, we often extend $\xi$ to $[0,n+1]$ by defining $\xi(0)=\xi(2)$ and $\xi(n+1)=\xi(n-1)$, and associate a quiver $Q_\xi$ with the vertex set $I\cup I'$ and arrows defined by 
\[
\xymatrix{
i-1 &  i \ar[l] \ar@/^10pt/[rr]^{\delta_{i,i_\diamond}}  \ar[drr]_{1-\delta_{i,i_\diamond}}  &&  i+1 \ar@/^10pt/[ll]_{1-\delta_{i,i_\diamond}}  \\
 & i' \ar[u] &&  (i+1)'}
\]
if $\xi(i)=\xi(i+1)+1$ and otherwise reverse all orientations, where $i_\diamond$ is the minimum integer $\ell\in [i,n]$ such that $\xi(\ell)=\xi(\ell+2)$ if $i<n$ and otherwise $n_\diamond=n$. 

A vertex $i\in [1,n]$ is said to be a \textit{source} or \textit{sink} if $i=n$ or $\xi(i)=\xi(i+2)$, where we ignore all frozen vertices in $Q_\xi$ and require that $1$ is a \textit{source} or \textit{sink} if and only if $\xi(1)=\xi(3)$.

Let $j_\bullet=0$ for $1\leq j \leq 1_\diamond$ and $j_\bullet$ be the maximal source or sink of $Q_\xi$ satisfying $j_\bullet<j$ for $j > 1_\diamond$. For $k\geq 1$ let
\[
\overline{k} = (k+1) (1-\delta_{k,k_\diamond}) + (k_\bullet+1)\delta_{k,k_\diamond}.
\]
\begin{example}\label{example23}
In $A_{9}$, let $\xi(1,2,3,4,5,6,7,8,9)=(-4,-5,-6,-5,-4,-3,-4,-5,-6)$. Then we have the following table.
\begin{table}[H]
\centering
\begin{tabular}{c|c|c|c|c|c|c|c|c|c}
\hline 
$I$ & 1 & 2 & 3 & 4 & 5& 6 & 7& 8 & 9 \\
$\xi$   &  -4 & -5 & -6 & -5 & -4 & -3 & -4 & -5 & -6 \\
$i_\diamond$ & 2 & 2 & 5 & 5 & 5 & 8 & 8 & 8 & 9 \\
$i_\bullet$ & 0 & 0 & 2 & 2 & 2 & 5 & 5 & 5 & 8 \\
$\overline{i}$ & 2 & 1 & 4 & 5 & 3 & 7 & 8 & 6 & 9 \\
\hline 
\end{tabular}
\end{table}
By definition, $Q_\xi$ is the following quiver
\[
\xymatrix{
1  \ar[dr]  &  2 \ar[l] \ar[r]  & 3 \ar[r]  \ar[d] & 4 \ar[r] \ar[d]  & 5 \ar[d]  & 6 \ar[l] \ar[dr]  & 7 \ar[l]  \ar[dr]  & 8 \ar[l] \ar[r] & 9 \ar[d]  \\
1' \ar[u]  & 2' \ar[u] & 3'  & 4'  \ar[ul]  & 5' \ar[ul]   & 6'  \ar[u]  & 7' \ar[u]  &  8' \ar[u]  & 9'}.
 \]
\end{example}

Let $\mathbb{P}=\text{Trop}(x'_j: j\in I)$ be the tropical semifield generated by $x'_j, j\in I$. Denote by $\mathcal{A}({\bf \widetilde{x}},Q_\xi)$ the \textit{cluster algebra} with an initial seed $({\bf \widetilde{x}},Q_\xi)$, where 
\[
{\bf \widetilde{x}}=\{x_1,\ldots,x_n,x'_1,\ldots,x'_n\},
\] 
$x_1,\ldots,x_n$ are cluster variables and $x'_1,\ldots,x'_n$ are coefficients.

In this paper, we fix $a\in \mathbb{C}^\times$. For simplicity of notation, we write $Y_{i,r}$ for $Y_{i,aq^r}$ for $i\in I$ and $r\in \mathbb{Z}$. Let $\mathcal{P}^+$ be the free abelian monoid generated by variables $Y_{i,\xi(i)\pm 1}$ for $i\in I$. Let $\mathscr{C}_\xi$ be the full subcategory of $\mathscr{C}$ consisting of objects all of whose Jordan-H\"older constituents are indexed by elements of $\mathcal{P}^+$. Simple modules in $\mathscr{C}_\xi$ are of the form $L(m)$, where $m\in \mathcal{P}^+$ and $m$ is called the \textit{highest $\ell$-weight} of $L(m)$. The elements in $\mathcal{P}^+$ are called \textit{dominant monomials}.

Following \cite{BC19}, for $1\leq i < j \leq n$, let $i_2 <\cdots <i_{k-1}$ be an ordered enumeration of the subset 
\[
\{p : i < p < j \mid \xi(p-1)=\xi(p+1) \},
\]
and $i_1=i$, $i_k=j$. Define an element $\omega(i,j)\in \mathcal{P}^+$ by 
\[
\omega(i,j)= Y_{i_1, a_1} Y_{i_2, a_2} \cdots Y_{i_k, a_k},  
\]
where $a_1 = \xi(i)\pm 1$ if $\xi(i+1) = \xi(i) \mp 1$ and $a_\ell = \xi(i_\ell) \pm 1$ if $\xi(i_\ell)=\xi(i_\ell-1)\pm 1$ for $\ell \geq 2$.

Let $\mathbf{Pr}_{\xi}=\{ Y_{i,\xi(i)\pm1} \mid i\in I \}\cup \{\omega(i,j) \mid i, j\in I, i\neq j\}$ and $\mathbf{f}=\{ f_i= Y_{i,\xi(i)-1} Y_{i,\xi(i)+1} \mid i\in I\}$.
A simple module $L(m)$ for $m\in \mathbf{Pr}_{\xi}\cup \mathbf{f}$ is called an \textit{Hernandez-Leclerc module} by Brito and Chari \cite{BC19}, which are precisely all the prime objects in this category. The simple modules $L(f_i)$, $i\in I$, are Kirillov-Reshitikhin modules and $L(Y_{i,r})$, $i\in I$, are fundamental modules. We are interested in $L(\omega(i,j))$ for $1\leq i < j \leq n$. 

Following Fomin and Zelevinsky's result \cite{FZ03}, the set of cluster variables in $\mathcal{A}({\bf \widetilde{x}},Q_\xi)$ is in bijection with the set $\Phi_{\geq -1}$ of almost positive roots in the root system of type $A_n$. Let $\{\alpha_i \mid i\in I\}$ be a set of simple roots of type $A_n$ and let $\alpha_{i,j}=\alpha_i + \cdots + \alpha_j$ for $1\leq i\leq j\leq n$. Denote all cluster variables and coefficients by 
\[
\{x_i:=x[-\alpha_i],  x[\alpha_{i,j}], x'_i  \mid 1\leq i\leq j\leq n\}. 
\]
Following \cite{BC19}, the cluster variable $x[\alpha_{i,j}]$ is obtained by mutating the sequence $i,i+1,\ldots,j$ at the initial cluster $\{x_i \mid 1\leq i \leq n\}$.

Let $\mathcal{K}_0(\xi)$ be the Grothendieck ring of $\mathscr{C}_\xi$. Denote by $[M]\in \mathcal{K}_0(\xi)$ the equivalence class of $M\in \mathscr{C}_\xi$. Brito and Chari proved that $\mathscr{C}_\xi$ is a monoidal category of the cluster algebra $\mathcal{A}({\bf \widetilde{x}},Q_\xi)$.

\begin{theorem}\cite[Theorem 1, Corollary in Section 1.3]{BC19} \label{BCTheorem1} 
Let $\xi: I \to \mathbb{Z}$ be a height function.  Then there is an isomorphism of rings $\iota: \mathcal{A}({\bf \widetilde{x}},Q_\xi) \to \mathcal{K}_0(\xi)$ such that 
\begin{align*}
& \iota(x[-\alpha_i]) = [L(Y_{i,\xi(i+1)})], \quad \iota(x'_i) = [L(Y_{i,\xi(i)-1}Y_{i,\xi(i)+1})], \\
& \iota(x[\alpha_{i,i_\diamond}]) = [L(Y_{i,\xi(i+1)\pm2})], \quad \xi(i) = \xi(i+1) \pm  1, \\
& \iota(x[\alpha_{i,k}]) = [\omega(i,\overline{k})], \quad k\neq i_\diamond, \\
& \iota(x'_i x[\alpha]) = [x'_i \omega], \quad i \in I, \ \alpha\in \Phi_{\geq -1}, \omega = \iota(x[\alpha]).
\end{align*}
Moreover, $\iota$ sends a cluster variable (respectively, cluster monomial) to a real prime simple object (respectively, real simple object)  of $\mathscr{C}_\xi$. In particular, $\mathscr{C}_\xi$ is a monoidal categorication of $ \mathcal{A}({\bf \widetilde{x}},Q_\xi)$. 
\end{theorem}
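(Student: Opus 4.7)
The plan is to construct $\iota$ by first specifying it on the initial extended cluster $\widetilde{\bf x}$ via the four explicit formulas listed, and then extending to all other cluster variables by successive mutations, checking at each step that the result is again the class of a simple object. The first preparatory check is that each proposed initial image lives in $\mathscr{C}_\xi$: the parity constraint in $\mathscr{C}_\xi$ (all $\ell$-weights drawn from $Y_{i,\xi(i)\pm 1}$) is immediate for the fundamental modules $L(Y_{i,\xi(i+1)})$ and for the Kirillov-Reshetikhin modules $L(Y_{i,\xi(i)-1}Y_{i,\xi(i)+1})$. Because the exchange matrix of $Q_\xi$ restricted to its mutable part is of Dynkin type $A_n$, Theorem~\ref{cluster variables bijection with almost positive roots} guarantees that there are exactly $n + \binom{n+1}{2}$ cluster variables, indexed by almost positive roots, so the induction on mutations terminates and $\iota$ is automatically defined on a complete list of generators.

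The core of the argument is that every mutation $\mu_k$ corresponds to a short exact sequence of $U_q(\widehat{\mathfrak g})$-modules whose classes realize the exchange relation $x_k x_k' = \prod_{i\to k} x_i + \prod_{k\to j} x_j$. For the initial cluster, these sequences are instances of $T$-system relations for Kirillov-Reshetikhin modules; as the mutation sequence advances through the vertices $i,i+1,\ldots,j$, the spectral parameters track precisely the description of $\omega(i,j)$: starting from $Y_{i,\xi(i+1)}$ and repeatedly multiplying by Kirillov-Reshetikhin factors produces, after cancellation against the denominators, a dominant monomial whose letters $Y_{i_\ell,a_\ell}$ satisfy the alternation property $(a_\ell - a_{\ell-1})(a_{\ell+1} - a_\ell) < 0$ and the spacing $|a_\ell - a_{\ell-1}| = i_\ell - i_{\ell-1} + 2$ that characterize Hernandez-Leclerc monomials. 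A direct bookkeeping on the height function, using that $i_\ell$ runs over the indices with $\xi(i_\ell-1) = \xi(i_\ell+1)$, produces the case distinction $k = i_\diamond$ vs.\ $k\neq i_\diamond$ and reproduces the two displayed formulas for $\iota(x[\alpha_{i,k}])$. Compatibility with frozen variables is automatic because the $x_i'$ appear only as coefficients and never get mutated.

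The principal obstacle is upgrading this class-level identity to the statement that $\iota$ actually sends a cluster variable to a \emph{real prime simple} module. Primeness reduces to showing that $\omega(i,\overline{k})$ cannot be factored as a product of two dominant monomials both lying in $\mathbf{Pr}_\xi \cup \mathbf{f}$; this is combinatorial and follows from the rigid spacing condition (iii) of Theorem~\ref{HL defintion equivalent}, since any nontrivial factorization would violate that condition. Reality, i.e.\ simplicity of $L(m)\otimes L(m)$, is the deeper ingredient: one would invoke Kashiwara's cyclicity theorem for tensor products of fundamental modules together with Chari's irreducibility criterion, or equivalently use Hernandez-Leclerc's criterion that reality holds whenever the corresponding cluster variable's $F$-polynomial has positive coefficients and the $q$-character enjoys a positivity/factorization property. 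Finally, the ring map $\iota$ is an isomorphism because both source and target are free of the same rank over $\mathbb{ZP}$ (the cluster monomials on one side and the classes of real simple objects on the other), so surjectivity onto a basis forces bijectivity; the monoidal categorification statement is then a formal consequence in the sense of \cite{HL10}.
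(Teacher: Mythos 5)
First, note that the paper does not prove Theorem \ref{BCTheorem1} at all: it is quoted verbatim from Brito and Chari \cite{BC19} (Theorem 1 and the Corollary in Section 1.3 there), and the only comment the paper makes on the proof is that it relies on the linear independence of cluster monomials in cluster algebras of finite type \cite{CKLP13,D11}. So you are attempting a proof that the paper deliberately outsources; that is legitimate, but your sketch has two genuine gaps.

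The first is your final step: ``$\iota$ is an isomorphism because both source and target are free of the same rank over $\mathbb{ZP}$, so surjectivity onto a basis forces bijectivity.'' This presupposes exactly the hard facts. The Grothendieck ring $\mathcal{K}_0(\xi)$ is free on the classes of \emph{all} simple objects of $\mathscr{C}_\xi$, so one must first classify those simples (this is where the prime decomposition into $\mathbf{Pr}_\xi\cup\mathbf{f}$ enters) and then show that the proposed images of the cluster monomials are linearly independent and exhaust that basis; the linear independence is precisely the input from \cite{CKLP13,D11} that the paper flags and that you never invoke, and without it ``surjectivity onto a basis'' is unestablished. The second gap is reality: there is no criterion asserting that positivity of the $F$-polynomial coefficients implies reality (every cluster variable has a positive $F$-polynomial by the positivity theorem, so such a criterion would be vacuous), and ``Kashiwara's cyclicity theorem together with Chari's irreducibility criterion'' is named but not applied. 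In \cite{BC19} reality and primeness are established by explicit $q$-character computations and induction along the mutation sequence, not as formal consequences. Your remaining outline --- defining $\iota$ on the initial seed, propagating along mutations via short exact sequences realizing the exchange relations, and tracking spectral parameters to recover $\omega(i,\overline{k})$ --- is consistent with the strategy of \cite{BC19} and \cite{HL10}, but each exchange relation must actually be verified to hold in $\mathcal{K}_0(\xi)$, which is again a $q$-character computation rather than an automatic consequence of the $T$-system.
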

The proof given by Brito and Chari in \cite{BC19} used the fact that the set of cluster monomials forms a linear basis of cluster algebras of finite type \cite{CKLP13,D11}.

\subsection{Snake graphs}\label{snake graphs}

Fix an orthonormal basis of the plane $\mathbb{R}^2$. A tile $G$ is a square of fixed side-length with four vertices and four edges in the plane whose sides are parallel or orthogonal to the chosen basis. Following \cite{CS13}, a \textit{snake graph} is a connected graph consisting of finitely many tiles $G_1, G_2, \ldots, G_d$ with $d\geq 1$, such that for each $i=1,\ldots,d-1$
\begin{itemize}
\item[(i)] $G_i$ and $G_{i+1}$ share exactly an edge $e_i$ and the edge is either the north edge of $G_i$ and the south edge of $G_{i+1}$ or the east edge of $G_i$ and the west edge of $G_{i+1}$.
\item[(ii)]  $G_i$ and $G_j$ have no edge in common whenever $|i-j|\geq 2$.
\item[(iii)]  $G_i$ and $G_j$ are disjoint whenever $|i-j|\geq 3$.
\end{itemize}

Let $\mathcal{G}=(G_1,G_2, \ldots, G_d)$ be a snake graph with tiles $G_1, G_2, \ldots, G_d$. The $d-1$ edges $e_1, e_2, \ldots, e_{d-1}$ are called \textit{interior edges} of $\mathcal{G}$ and the rest of edges are called \textit{boundary edges}. Denote by $_S\mathcal{G}$  (respectively, $_W\mathcal{G}$)  the south (respectively, west) edge of the first tile of $\mathcal{G}$ and denote by $\mathcal{G}^N$  (respectively, $\mathcal{G}^E$) the north (respectively, east) edge of the last tile of $\mathcal{G}$.  

A snake graph $\mathcal{G}$ is called \textit{straight} if all its tiles lie in one column or one row, and a snake graph is called \textit{zigzag} if no three consecutive tiles are straight \cite{CS18,CS19}.

A \textit{perfect matching} $P$ of a snake graph $\mathcal{G}$ is a subset of the set of edges of $\mathcal{G}$ such that each vertex of $\mathcal{G}$ is exactly in one edge in $P$. Denote by $\text{Match}(\mathcal{G})$ the set of all perfect matchings of $\mathcal{G}$. A snake graph $\mathcal{G}$ has precisely two perfect matchings, called the \textit{minimal matching} $P_{-}$ and the \textit{maximal matching} $P_{+}$ of $\mathcal{G}$, which contain only boundary edges. 

A snake graph is called a \textit{labeled snake graph} if each edge and each tile in the snake graph carries a label or weight \cite{CS15}. For our snake graphs, these labels are cluster variables. Without confusion, we still use $\mathcal{G}$ to denote a labeled snake graph. Assume that the edges of a perfect matching $P$ of $\mathcal{G}$ are labeled by $v_1, v_2, \ldots, v_r$.  Following \cite{MSW11}, one defines the \textit{weight monomial} $x(P)$ of $P$ by $x(P)=\prod_{i=1}^r x_{v_i}$. 

Given a snake graph $\mathcal{G}$, let $e_0=_S\mathcal{G}$ and choose an edge $e_d \in \{\mathcal{G}^N, \mathcal{G}^E\}$. In \cite{CS18}, Canakci and Schiffler defined a sign function   
\[
f: \{e_0, e_1, e_2,\ldots, e_{d-1}, e_d\} \to \{\pm \},
\]
such that on every tile in $\mathcal{G}$ the north edge and the west edge have the same sign, the south edge and the east edge have the same sign, and the sign on the north edge is opposite to the sign on the south edge.

Let $a_i \in \mathbb{Z}_{\geq 1}$ be the number of the sign in a maximal subsequence of constant sign appearing in $(f(e_0), f(e_1), f(e_2),\ldots, f(e_{d-1}), f(e_d))$. As an illustrated example, the sign function of the following snake graph is $(-,-,+,+,+,-,-,-,-,-)$, and $a_1=2,a_2=3,a_3=5$.  
\begin{figure}[H]
\resizebox{.5\width}{.5\height}{
\begin{tikzpicture}
\draw (0,0) rectangle (1,1);
\draw (1,0) rectangle (2,1);
\draw (2,0) rectangle(3,1);
\draw (2,1) rectangle(3,2);
\draw (3,1) rectangle(4,2);
\draw (4,1) rectangle(5,2);
\draw (4,2) rectangle(5,3);
\draw (5,2) rectangle(6,3);
\draw (5,3) rectangle(6,4);
\node[red] at (0.5,-0.1) {$\pmb{-}$};
\node[red] at (1,0.5) {$\pmb{-}$};
\node[red] at (2,0.5) {$\pmb{+}$};
\node[red] at (2.5,1) {$\pmb{+}$};
\node[red] at (3,1.5) {$\pmb{+}$};
\node[red] at (4,1.5) {$\pmb{-}$};
\node[red] at (4.5,1.9) {$\pmb{-}$};
\node[red] at (5,2.4) {$\pmb{-}$};
\node[red] at (5.5,2.9) {$\pmb{-}$};
\node[red] at (6,3.5) {$\pmb{-}$};
\end{tikzpicture}}
\end{figure}

For a snake graph $\mathcal{G}$ with a sign function $(a_1,a_2,\ldots,a_n)$, let $\ell_i=\sum_{j=1}^i a_j$ for $1\leq i \leq n$ and we agree that $\ell_0=0$. Following \cite{CS18}, one can define zigzag subsnake graphs $\mathcal{H}_1,\ldots,\mathcal{H}_n$ of  $\mathcal{G}$ as follows. Let 

\begin{align*}
& \mathcal{H}_1 =(G_1, G_2, \ldots, G_{\ell_1-1}),  \\
& \qquad \quad \vdots \qquad \qquad  \vdots \\
& \mathcal{H}_i =(G_{\ell_{i-1}+1}, \ldots, G_{\ell_i-1}), \\
& \qquad \quad \vdots \qquad \qquad  \vdots \\
& \mathcal{H}_n =(G_{\ell_{n-1}+1}, \ldots, G_d),
\end{align*}
where $(G_j,\ldots,G_k)$ is the zigzag subsnake graph with the tiles $G_j,G_{j+1},\ldots,G_k$ if $j\leq k$, and $(G_{j+1},\ldots,G_j)$ is the single edge $e_j$. The decomposition is in fact obtained by deleting the sign-changed tiles. 

Following \cite{Rab18}, once we choose the minimal perfect matching $P_{-}$ of a snake graph $\mathcal{G}$, then the minimal matching $P_{-}|_{\mathcal{H}_i}$ of a subsnake graph $\mathcal{H}_i$ is either the matching which inherits from $P_{-}$ or the union of the matching which inherits from $P_{-}$ and a unique interior edge. 

For an arbitrary perfect matching $P$ of  $\mathcal{G}$, the symmetric different $P_{-} \ominus P$ is defined as
\[
P_{-} \ominus P=(P_{-} \cup P) \backslash (P_{-} \cap P).
\] 
\begin{lemma} \cite[Lemma 4.8]{MSW11}
The set $P_{-} \ominus P$ is the set of boundary edges of a (possibly disconnected) subgraph $G_P$ of $\mathcal{G}$, which is a union of cycles. These cycles enclose a set of ties $\cup_{j\in J} G_{i_j}$, where $J$ is a finite index set.
\end{lemma}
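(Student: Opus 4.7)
The plan is to exploit the fact that both $P_{-}$ and $P$ are perfect matchings together with the planarity of the snake graph $\mathcal{G}$.

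First I would analyse the situation vertex-by-vertex. Every vertex $v$ of $\mathcal{G}$ is incident to exactly one edge of $P_{-}$ and exactly one edge of $P$. If these two edges coincide, then $v$ is incident to no edge of $P_{-}\ominus P$; otherwise it is incident to exactly two edges of $P_{-}\ominus P$, namely the edge at $v$ coming from $P_{-}$ and the edge at $v$ coming from $P$. Consequently every vertex of the subgraph $G_P$ spanned by $P_{-}\ominus P$ has degree $0$ or $2$. A standard elementary result then asserts that such a graph is a disjoint union of isolated vertices and simple cycles; discarding the isolated vertices, $G_P$ is a finite disjoint union of simple cycles.

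Next I would invoke planarity. The snake graph $\mathcal{G}$ is, by construction, embedded in $\mathbb{R}^2$ as a union of axis-parallel unit squares in which consecutive tiles share exactly one full edge; in particular, its bounded faces are precisely the tiles $G_1,\ldots,G_d$. Each cycle $C$ of $G_P$ is then a simple closed curve in the plane and, by the Jordan curve theorem, bounds a unique bounded region. Because this region cannot meet the interior of any edge of $\mathcal{G}$ lying outside $C$, it decomposes as a union of bounded faces of $\mathcal{G}$, that is, as a union of tiles. Taking the union of the tiles enclosed by all the cycle components of $G_P$ produces a finite index set $J$ together with the set $\bigcup_{j\in J}G_{i_j}$ asserted in the statement.

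The principal obstacle is the planarity step, since one has to rule out the possibility that a cycle of $G_P$ encloses a ``spurious'' bounded face of $\mathcal{G}$ that is not a tile. This is where the specific axioms defining a snake graph (conditions (i)--(iii) in Section \ref{snake graphs}) are used: they ensure that whenever tiles $G_i$ and $G_j$ are non-adjacent they are in fact disjoint, so that $\mathcal{G}$, viewed as a plane graph, has as its bounded faces exactly the tiles. Once this is verified, an easy induction on the number of tiles enclosed by $C$ completes the argument.
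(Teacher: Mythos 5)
The paper does not prove this statement at all: it is quoted verbatim from \cite{MSW11} (Lemma 4.8 there) and used as a black box, so there is no internal proof to compare against. Your argument is correct and is essentially the standard one (and the one underlying the cited source): the symmetric difference of two perfect matchings has all vertex degrees $0$ or $2$, hence is a disjoint union of simple cycles, and each cycle in a plane graph bounds a region that is a union of bounded faces, which for a snake graph are exactly the tiles by axioms (i)--(iii). The only imprecision is the phrase that the enclosed region ``cannot meet the interior of any edge of $\mathcal{G}$ lying outside $C$'': interior edges of $\mathcal{G}$ may well lie inside the region (e.g.\ when a cycle encloses two adjacent tiles, their shared edge does); what you actually need, and what is true, is only that the closed region is a union of closures of bounded faces, which holds for any cycle in any plane graph and does not require the region to be edge-free.
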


From now on, assume that the label of a tile $G_i$ is $\tau_i$ in a labeled snake graph $\mathcal{G}$. Following \cite[Definition 4.9]{MSW11}, with the notation above,  the \textit{height monomial} $y(P)$ of a perfect matching $P$ of $\mathcal{G}$ is defined by 
\[
y(P) = \prod_{j\in J} y_{\tau_{i_j}}.
\]
 
\subsection{Continued fractions and $F$-polynomials}
Following \cite{CS18,Rab18}, a finite continued fraction 
\begin{align*}
[a_1,a_2,\ldots,a_n] : = a_1 + \cfrac{1}{ a_2 + \cfrac{1}{ \ddots + \cfrac{1}{a_n}}} 
\end{align*}
is said to be \textit{positive} if each $a_i$ is a positive integer. Denote by $\mathcal{N}[a_1,a_2,\ldots,a_n]$ the numerator of the continued fraction. Then $\mathcal{N}[a_1,a_2,\ldots,a_n]$ is computed by the recursion
\[
\mathcal{N}[a_1,a_2,\ldots,a_n] = a_n \mathcal{N}[a_1,a_2,\ldots,a_{n-1}] + \mathcal{N}[a_1,a_2,\ldots,a_{n-2}],
\]
where $\mathcal{N}[a_1]=a_1$ and $\mathcal{N}[a_1,a_2]=a_1a_2+1$. We refer the reader to \cite[Proposition 2.1]{RS20} for more properties of continued fractions.

For a positive continued fraction $[a_1,a_2,\ldots,a_n]$, let $\ell_i=\sum_{s=1}^i a_s$ and we agree $\ell_0=0$. In \cite{CS18}, Canakci and Schiffler established a bijection between snake graphs and positive continued fractions via the sign function of a snake graph. Fix a positive continued fraction $[a_1,a_2,\ldots,a_n]$, the corresponding snake graph $\mathcal{G}[a_1,a_2,\ldots,a_n]$ consists of tiles $G_1, G_2, \ldots, G_{\ell_n-1}$ and has the sign function of the following form
\[
(\underbrace{-,\ldots,-}_{a_1},  \underbrace{+,\ldots,+}_{a_2}, \underbrace{-,\ldots,-}_{a_3},  \ldots, \underbrace{\pm,\ldots,\pm}_{a_n}).
\]
The snake graph $\mathcal{G}[a_1,a_2,\ldots,a_n]$ has $\mathcal{N}[a_1,a_2,\ldots,a_n]$ many perfect matchings \cite[Theorem 3.4]{CS18}.

For simplicity, let $\prod_{\ell=i}^j x_\ell=1$ for $j<i$. The following results from \cite{Rab18} will play an important role in the subsequent proof. In \cite{Rab18} Rabideau required that $_S\mathcal{G}\in P_{-}$ for a labeled snake graph $\mathcal{G}$. 
 
\begin{lemma}\cite[Lemma 3.3]{Rab18} \label{Rab3.3}
The $F$-polynomial associated to a zigzag snake graph $(G_1,G_2,\ldots,G_d)$ (the label of $G_j$ is $\tau_j$) is 
\[
\sum_{k=0}^d \prod_{j=1}^k y_{\tau_j}  \quad \text{ or } \quad  \sum_{k=1}^{d+1} \prod_{j=k}^d y_{\tau_j},
\] 
depending on whether the minimal perfect matching $P_{-}$ contains a pair of opposite boundary edges of $G_1$.
\end{lemma}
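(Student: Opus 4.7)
The plan is to evaluate the $F$-polynomial directly as
\[
F = \sum_{P \in {\rm Match}(\mathcal{G})} y(P),
\]
which follows from Theorem \ref{FZ07thm3.7} together with the Musiker-Schiffler-Williams expansion underlying Theorem \ref{theorem between HL-modules and snake graphs} by specializing the $x$-variables to $1$. It therefore suffices to enumerate the perfect matchings of a zigzag $\mathcal{G}$ and compute each height monomial $y(P)$.

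First, I would show that a zigzag snake graph with $d$ tiles has exactly $d+1$ perfect matchings, totally ordered as $P_- = P_0, P_1, \ldots, P_d = P_+$, where $P_i$ is obtained from $P_{i-1}$ by a flip at one tile (replacing two opposite edges of that tile by its other pair of opposite edges). I would argue by induction on $d$: gluing a new tile $G_d$ perpendicular to the attachment $G_{d-2} \to G_{d-1}$, the zigzag hypothesis forces each matching of $(G_1, \ldots, G_{d-1})$ either to extend uniquely over $G_d$, or to admit one additional flipped extension at $G_d$, netting exactly one new matching.

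Second, I would pin down the sequence of flipped tiles. Since $P_-$ uses only boundary edges, a tile $G_j$ is flippable in $P_-$ only if two opposite boundary edges of $G_j$ both lie in $P_-$. A local covering analysis at each vertex, using that the two interior edges of any non-endpoint tile are perpendicular in a zigzag, shows that only $G_1$ or $G_d$ can be flippable in $P_-$; and once an endpoint tile is flipped, exactly one more tile (its neighbor) becomes flippable in the new matching. Iterating, the twist sequence is $G_1, G_2, \ldots, G_d$ or $G_d, G_{d-1}, \ldots, G_1$. The condition that $G_1$ is the flippable endpoint of $P_-$ is exactly the condition that $P_-$ contains a pair of opposite boundary edges of $G_1$.

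Finally, by \cite[Lemma 4.8]{MSW11}, $y(P_i)$ is the product of $y_{\tau_j}$ over the tiles enclosed by $P_- \ominus P_i$. An easy induction on $i$ shows that flips propagating from $G_1$ enclose the block $\{G_1, \ldots, G_i\}$, giving $y(P_i) = \prod_{j=1}^i y_{\tau_j}$ and hence the first formula; the reverse propagation gives $y(P_i) = \prod_{j=d-i+1}^d y_{\tau_j}$ and the second formula (after reindexing by $k = d - i + 1$). The main obstacle is the local analysis in step two: ruling out flippability of interior tiles in $P_-$ and verifying that each flip enables exactly its neighbor, since these propagation facts are what reduce the otherwise branching structure of matchings to the linear chain required for the formulas to be sums of monomials of the two specific shapes stated.
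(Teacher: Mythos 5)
The paper does not actually prove this statement: it is imported verbatim from Rabideau \cite{Rab18}*{Lemma 3.3} and used as a black box, so there is no internal proof to compare against. Your proposal is a correct self-contained argument and is essentially the natural direct proof (and close in spirit to Rabideau's original one): reduce $F$ to $\sum_{P}y(P)$, show that ${\rm Match}(\mathcal{G})$ for a zigzag graph is a chain $P_-=P_0\prec P_1\prec\cdots\prec P_d=P_+$ obtained by turning the tiles one after another starting from one end, and read off $y(P_i)$ as the product of the $y_{\tau_j}$ over the turned block. The crux is exactly where you locate it: for an interior tile $G_j$ of a zigzag graph the two interior edges $e_{j-1},e_j$ are adjacent, so its two boundary edges are also adjacent and $G_j$ can never be turnable in a boundary-only matching; hence only an end tile can be turned in $P_-$, and your count $|{\rm Match}(\mathcal{G})|=d+1$ from step one then forces the linear propagation (if both end tiles were turnable in $P_-$, the lattice of matchings would have at least two atoms and strictly more than $d+1$ elements). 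One small point worth making explicit when you write this up: the identification $y(P_i)=\prod_{j=1}^{i}y_{\tau_j}$ should be justified either via the recursive definition $y(P')=y_{\tau_\ell}\,y(P)$ under a turn at $G_\ell$ (as in the proof of Lemma \ref{Lemma36} in this paper) or via the symmetric difference $P_-\ominus P_i$ enclosing exactly $G_1,\dots,G_i$; the two are consistent, but you should pick one and check that consecutive enclosed cycles merge correctly when the turned tiles are adjacent.
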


In \cite[Definition 3.1]{Rab18},  Rabideau defined a continued fraction of Laurent polynomial $[\mathcal{L}_1,\mathcal{L}_2,\ldots,\mathcal{L}_n]$, where each $\mathcal{L}_i=\varphi_iC_i$ and 
\begin{align*}
C_i=\begin{cases}
\prod\limits_{j=1}^{\ell_{i-1}} y_{\tau_j} & \text{if $i$ is odd}, \\
\prod\limits_{j=1}^{\ell_{i}-1} y^{-1}_{\tau_j} & \text{if $i$ is even}, \\
\end{cases} \quad 
\varphi_i = \begin{cases}
\sum\limits_{k=\ell_{i-1}}^{\ell_i-1} \prod\limits_{j=\ell_{i-1}+1}^{k} y_{\tau_j} & \text{if $i$ is odd}, \\
\sum\limits_{k=\ell_{i-1}+1}^{\ell_i} \prod\limits_{j=k}^{\ell_i-1} y_{\tau_j} & \text{if $i$ is even}. \\
\end{cases} 
\end{align*} 
Here $\varphi_i$ is in fact the $F$-polynomial associated to a certain zigzag snake graph. Note that we revise the subscripts of the letter $y$, because we agree that the label of $G_j$ is $\tau_j$.

\begin{theorem}\cite[Theorem 3.4]{Rab18} \label{Rab3.4}
The $F$-polynomial associated to the snake graph of the positive continued fraction $[a_1,a_2,\ldots,a_n]$, $a_1>1$, is given by the following equation 
\[
F(\mathcal{G}[a_1,a_2,\ldots,a_n]) = \begin{cases}
\mathcal{N}[\mathcal{L}_1,\mathcal{L}_2,\ldots,\mathcal{L}_n] & \text{if $n$ is odd}, \\
C^{-1}_n\mathcal{N}[\mathcal{L}_1,\mathcal{L}_2,\ldots,\mathcal{L}_n] & \text{if $n$ is even}, \\
\end{cases}
\] 
where $\mathcal{N}[\mathcal{L}_1,\mathcal{L}_2,\ldots,\mathcal{L}_n]$ is defined by the recursion 
\[
\mathcal{N}[\mathcal{L}_1,\mathcal{L}_2,\ldots,\mathcal{L}_n]=\mathcal{L}_n\mathcal{N}[\mathcal{L}_1,\mathcal{L}_2,\ldots,\mathcal{L}_{n-1}]+\mathcal{N}[\mathcal{L}_1,\mathcal{L}_2,\ldots,\mathcal{L}_{n-2}]
\]
where $\mathcal{N}[\mathcal{L}_1]=\mathcal{L}_1$ and $\mathcal{N}[\mathcal{L}_1,\mathcal{L}_2]=\mathcal{L}_1\mathcal{L}_2+1$.
\end{theorem}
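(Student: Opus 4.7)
The plan is to prove the formula by induction on $n$, exploiting the parallel recursive structures of snake graphs (obtained by grafting zigzag pieces) and continued fractions (obtained by appending a new entry). The $F$-polynomial is the generating function $F(\mathcal{G}) = y(P_-)^{-1} \sum_{P\in \mathrm{Match}(\mathcal{G})} y(P)$, so the goal is to track height monomials across the inductive decomposition.

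For the base cases $n=1$ and $n=2$, the snake graph $\mathcal{G}[a_1]$ is already a zigzag snake graph with $a_1-1$ tiles, so Lemma \ref{Rab3.3} gives $F(\mathcal{G}[a_1]) = \varphi_1 = \mathcal{L}_1 = \mathcal{N}[\mathcal{L}_1]$ (since $C_1 = 1$). The case $n=2$ can be verified directly by enumerating matchings of the two-zigzag graph glued at the single sign-changed tile $G_{\ell_1}$, matching the computation of $C_2^{-1}\mathcal{N}[\mathcal{L}_1,\mathcal{L}_2] = C_2^{-1}(\mathcal{L}_1\mathcal{L}_2 + 1)$.

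For the inductive step, I would use the canonical decomposition $\mathcal{G}[a_1,\ldots,a_n] = \mathcal{G}[a_1,\ldots,a_{n-1}] \cup \{G_{\ell_{n-1}}\} \cup \mathcal{H}_n$, where $G_{\ell_{n-1}}$ is the sign-changing tile and $\mathcal{H}_n$ is the final zigzag subgraph on tiles $G_{\ell_{n-1}+1},\ldots,G_{\ell_n - 1}$. Every perfect matching $P$ of $\mathcal{G}[a_1,\ldots,a_n]$ restricts to one of two types, determined by how it matches the interior edge between $G_{\ell_{n-1}-1}$ and $G_{\ell_{n-1}}$: either $P$ restricts to a matching of $\mathcal{G}[a_1,\ldots,a_{n-1}]$ together with a specific matching of $\mathcal{H}_n$ augmented by $G_{\ell_{n-1}}$ (contributing $\mathcal{L}_n\cdot F(\mathcal{G}[a_1,\ldots,a_{n-1}])$ after dividing by $y(P_-)$), or $P$ restricts to a matching of $\mathcal{G}[a_1,\ldots,a_{n-2}]$ with a forced matching of $\{G_{\ell_{n-1}}\}\cup\mathcal{H}_n$ (contributing $F(\mathcal{G}[a_1,\ldots,a_{n-2}])$). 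This yields the recursion $F(\mathcal{G}[a_1,\ldots,a_n]) = \mathcal{L}_n\cdot F(\mathcal{G}[a_1,\ldots,a_{n-1}]) + F(\mathcal{G}[a_1,\ldots,a_{n-2}])$, up to the $C_n^{-1}$ correction in the even case, which then matches the continued-fraction recursion by the inductive hypothesis.

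The main obstacle is the careful bookkeeping of height monomials across the decomposition, especially the normalization constants $C_i$ and the parity dichotomy in the theorem. The parity dependence arises because the sign function alternates at each grafting step, so whether the boundary edges on the exterior of $\mathcal{H}_n$ lie in $P_-$ or $P_+$ depends on whether $n$ is odd or even; this controls whether the gluing tile $G_{\ell_{n-1}}$ contributes a factor of $y_{\tau_{\ell_{n-1}}}$ that must be absorbed into $\mathcal{L}_n$ versus corrected by $C_n^{-1}$. Identifying $\varphi_i$ with the $F$-polynomial of the zigzag block $\mathcal{H}_i$ via Lemma \ref{Rab3.3}, and then verifying that the shift by $C_i$ exactly reconciles the minimal matching of each block with the global minimal matching $P_-$ of $\mathcal{G}[a_1,\ldots,a_n]$, is the delicate combinatorial step that will require the most attention.
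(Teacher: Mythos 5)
Your plan — induction on $n$ using the base cases $n=1,2$, the grafting decomposition of $\mathcal{G}[a_1,\ldots,a_n]$ into $\mathcal{G}[a_1,\ldots,a_{n-1}]$, the sign-changing tile $G_{\ell_{n-1}}$ and the zigzag block $\mathcal{H}_n$, the identification $F(\mathcal{H}_n)=\varphi_n$ via Lemma \ref{Rab3.3}, and the parity-dependent absorption of monomials into the $C_i$ — is exactly the argument of Rabideau that this theorem is quoted from, and it is the same induction the paper carries out in full for the dual statement (Theorem \ref{dual case of F-polynomial}), where the matching dichotomy you describe is packaged as the two-term recursion (\ref{dual formula}) with coefficients $y_{34}$, $y_{56}$. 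The paper itself offers no independent proof of this particular statement beyond the citation, so your proposal is correct and coincides with the intended proof.
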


\section{$q$-characters of Hernandez-Leclerc modules and snake graphs}\label{The definition of HL-modules}
In this section, we give a formula for $q$-characters of Hernandez-Leclerc modules using snake graphs, which involves an explicit formula for $F$-polynomials. 

\subsection{A combinatorial description of Hernandez-Leclerc modules} 
We recall the definition of Hernandez-Leclerc modules in \cite{BC19}. Let $I=\{1,2,\ldots,n\}$. Recall that $\mathscr{C}_\xi$ is the full subcategory of $\mathscr{C}$ consisting of objects all of whose Jordan-H\"older constituents are indexed by elements of $\mathcal{P}^+$, see Section \ref{subcategory and Hernandez-Leclerc modules}. Brito and Chari proved that $\mathscr{C}_\xi$ is a monoidal categorification of the cluster algebra $\mathcal{A}({\bf \widetilde{x}},Q_\xi)$. They call these modules in $\mathscr{C}_\xi$, which correspond to cluster variables and frozen variables, Hernandez-Leclerc modules. 


From now on, we call an Hernandez-Leclerc module an HL-module for simplicity.

\begin{theorem}\label{HL defintion equivalent}
An HL-module corresponding to a cluster variable is a simple $U_{q}(\widehat{\mathfrak{g}})$-module with the highest $\ell$-weight monomial
\begin{align} \label{equation: HL}
Y_{i_1,a_1}Y_{i_2,a_2}\cdots Y_{i_k,a_k},
\end{align}
where $k\in\mathbb{Z}_{\geq 1}$, $i_j\in I$, $a_j \in \mathbb{Z}$ for $j=1,2,\ldots,k$, and
\begin{itemize}
\item[{\rm (i)}] $i_1<i_2<\cdots<i_k$,
\item[{\rm (ii)}] $(a_{j}-a_{j-1})(a_{j+1}-a_{j})<0$ for $2\leq j\leq k-1$,
\item[{\rm (iii)}] $|a_{j}-a_{j-1}|=i_{j}-i_{j-1}+2$ for $2\leq j\leq k$.
\end{itemize}
\end{theorem}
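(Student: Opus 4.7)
The plan is to prove the stated characterization in both directions: every HL-module corresponding to a non-frozen cluster variable has highest $\ell$-weight monomial of the prescribed form, and conversely every monomial satisfying \textup{(i)--(iii)} is realized as the highest $\ell$-weight of some HL-module for an appropriate choice of height function $\xi$.

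By Theorem~\ref{BCTheorem1}, the non-frozen cluster variables of $\mathcal{A}({\bf \widetilde{x}},Q_\xi)$ fall into three families whose corresponding HL-modules are $L(Y_{i,\xi(i+1)})$ for $x[-\alpha_i]$, $L(Y_{i,\xi(i+1)\pm 2})$ for $x[\alpha_{i,i_\diamond}]$, and $L(\omega(i,\overline{k}))$ for $x[\alpha_{i,k}]$ with $k\neq i_\diamond$. The first two families give single-factor monomials, so \textup{(i)--(iii)} hold vacuously with $k=1$. For the third family I would expand $\omega(i,\overline{k}) = Y_{i_1,a_1}\cdots Y_{i_s,a_s}$ with $i_1=i$, $i_s=\overline{k}$, and $i_2<\cdots<i_{s-1}$ the local extrema of $\xi$ strictly inside $(i_1,i_s)$ (positions $p$ with $\xi(p-1)=\xi(p+1)$). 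Condition~\textup{(i)} is immediate. Since $\xi$ has unit steps and no local extremum in any open subinterval $(i_{\ell-1},i_\ell)$, the restriction of $\xi$ to $[i_{\ell-1},i_\ell]$ is monotone, with direction alternating in $\ell$. A short case analysis of the defining rules for $a_\ell$ then yields $a_\ell-a_{\ell-1}=+(i_\ell-i_{\ell-1}+2)$ when that direction is upward and $-(i_\ell-i_{\ell-1}+2)$ when downward, simultaneously establishing \textup{(ii)} and \textup{(iii)}.

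For the backward direction, given $M = Y_{i_1,a_1}\cdots Y_{i_k,a_k}$ satisfying \textup{(i)--(iii)} with $k\geq 2$, I would construct $\xi$ as follows. On $[i_1,i_k]$, make $\xi$ monotone on each subinterval $[i_{\ell-1},i_\ell]$, increasing if $a_\ell>a_{\ell-1}$ and decreasing if $a_\ell<a_{\ell-1}$, and set $\xi(i_\ell)=a_\ell-1$ or $a_\ell+1$ according to whether the direction arriving at $i_\ell$ is upward or downward (with the analogous rule for $\xi(i_1)$ using the outgoing direction). Condition~\textup{(iii)} guarantees that these endpoint heights are compatible with the prescribed monotone displacement $|\xi(i_\ell)-\xi(i_{\ell-1})|=i_\ell-i_{\ell-1}$, and condition~\textup{(ii)} ensures that the directions actually flip at each $i_\ell$ for $2\leq\ell\leq k-1$, making these points local extrema. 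Then extend $\xi$ to all of $I$ monotonically outside $[i_1,i_k]$ so that neither $i_1$ nor $i_k$ becomes a local extremum. Taking $k':=i_k-1$, one checks that $\xi(k')\neq\xi(k'+2)$ (so $k'\neq(i_1)_\diamond$) and $\overline{k'}=i_k$; hence by construction $M=\omega(i_1,\overline{k'})$, and $L(M)$ is the HL-module associated to $x[\alpha_{i_1,k'}]$. The case $k=1$ is handled directly by choosing any $\xi$ with $\xi(i_1+1)=a_1$, so that $L(M)$ is the HL-module for $x[-\alpha_{i_1}]$.

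The main obstacle is the consistency verification in the backward direction: the construction requires that the two determinations of each $\xi(i_\ell)$ (from the left and from the right) agree, which is enforced precisely by \textup{(iii)} combined with the direction alternation \textup{(ii)}. The small-$k$ cases where \textup{(ii)} and/or \textup{(iii)} are vacuous must be handled separately, and one must also carefully check the edge inequality $k'\neq(i_1)_\diamond$ so that the third clause of Theorem~\ref{BCTheorem1} applies rather than the second.
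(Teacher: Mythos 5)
Your proposal is correct and follows essentially the same route as the paper's proof: one direction reads conditions (i)--(iii) off the definition of $\omega(i,j)$ using the alternating monotonicity of $\xi$ on the subintervals $[i_{\ell-1},i_\ell]$, and the other direction reconstructs a height function $\xi$ piecewise from the data $(i_\ell,a_\ell)$, with (iii) guaranteeing consistency of the endpoint values and (ii) forcing the local extrema, before invoking Theorem~\ref{BCTheorem1}. The only cosmetic difference is bookkeeping: you normalize via $k'=i_k-1$ and verify $\overline{k'}=i_k$, whereas the paper extends $\xi$ so that $(j-1),j,\ldots,n$ are all sources or sinks and splits on whether $j=i_\diamond$.
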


\begin{proof}
In \cite{BC19}, the definition of HL-modules depends on the choice of a map $\xi: I \to \mathbb{Z}$. We will prove that every simple $U_{q}(\widehat{\mathfrak{g}})$-module with the highest $\ell$-weight monomial (\ref{equation: HL}) determines a map $\xi: I \to \mathbb{Z}$ and hence it is an HL-module in the sense of Brito and Chari. Indeed, let $m=Y_{i_1,a_1}Y_{i_2,a_2}\cdots Y_{i_k,a_k}$ for $i_1<i_2<\cdots<i_k$. Let $i=i_1$, $j=i_k$. We split the proof into the following two cases.

{\bf Case 1}.  If $a_{1}<a_{2}$, we define $\xi(i)=a_{1}+1$, $\xi(i+1)=a_{1}+2,\ldots,\xi(i_2)=a_{1}+i_2-i_1+1$. Condition~(iii) implies that $\xi(i_2)=a_{1}+i_2-i_1+1=a_{2}-1$. Using Condition~(ii) and the same argument as before, we define a map $\xi: [i,j] \to \mathbb{Z}$ by 
\[ 
\xi(x)=
\begin{cases}
a_{2\ell-1}+x-i_{2\ell-1}+1, & \text{if $x \in [i_{2\ell-1},i_{2\ell}]$;} \\
a_{2\ell}-x+i_{2\ell}-1, & \text{if $x \in[i_{2\ell},i_{2\ell+1}]$.}
\end{cases}
\]
We extend $\xi$ to the domain $[1,n]$ subject to 
\begin{align*}
\begin{cases}
\xi(x)-\xi(x+1)=1 & \text{if $1\leq x \leq i-1$}, \\
\xi(x+1)=\xi(x-1) &  \text{if $j\leq x \leq n-1$}.  
\end{cases}
\end{align*}
So the vertices $(j-1), j, \ldots, n$ are sources or sinks. It follows from Theorem \ref{BCTheorem1} that 
\begin{align*}
\iota(x[\alpha_{i,j}]) = \begin{cases} 
 [L(Y_{i,a_1})],  & \text{if $j=i_\diamond$,} \\
 [\omega(i,j)] = [L(Y_{i_1,a_1}Y_{i_2,a_2}\ldots Y_{i_k,a_k})] & \text{if $j \neq i_\diamond$,} \\
\end{cases}
\end{align*}
which is an HL-module in the sense of Brito and Chari.

{\bf Case 2}.  If $a_{1}>a_{2}$, we define $\xi(i)=a_{1}-1$, $\xi(i+1)=a_{1}-2,\ldots,\xi(i_2)=a_{1}-i_2+i_1-1$. By Condition~(iii), we have $\xi(i_2)=a_{1}-i_2+i_1-1=a_{2}+1$. Using Condition~(ii) and the same argument as before, we define a map $\xi: [i,j] \to \mathbb{Z}$ by 
\begin{align*}
\xi(x)=\begin{cases}
a_{2\ell-1} -x + i_{2\ell-1}-1, & \hbox{$i\in [i_{2\ell-1},i_{2\ell}]$;} \\
a_{2\ell} + x - i_{2\ell}+1, & \hbox{$i\in[i_{2\ell},i_{2\ell+1}]$.}
\end{cases}
\end{align*}
We extend $\xi$ to the domain $[1,n]$ subject to 
\begin{align*}
\begin{cases}
\xi(x+1)-\xi(x)=1 & \text{if $1\leq x \leq i-1$}, \\
\xi(x+1)=\xi(x-1) &  \text{if $j\leq x \leq n-1$}.  
\end{cases}
\end{align*}
So the vertices $(j-1), j, \ldots, n$ are sources or sinks. It follows from Theorem \ref{BCTheorem1} that 
\begin{align*}
\iota(x[\alpha_{i,j}]) = \begin{cases} 
 [L(Y_{i,a_1})],  & \text{if $j=i_\diamond$,} \\
 [\omega(i,j)] = [L(Y_{i_1,a_1}Y_{i_2,a_2}\ldots Y_{i_k,a_k})] & \text{if $j \neq i_\diamond$,} \\
\end{cases}
\end{align*}
which is an HL-module in the sense of Brito and Chari.

Conversely, for $\omega_{i,\xi_{i}\pm1}\in \mathbf{Pr}_{\xi}$, obviously, it can be written into the form (\ref{equation: HL}). For any $\omega_{i,j}=\omega_{i_{1},a_{1}}\ldots\omega_{i_{k},a_{k}}\in \mathbf{Pr}_{\xi}$, by the definition of $\omega_{i,j}$, we have
\begin{equation*}
i_{1}<i_{2}<\ldots<i_{k},
\end{equation*}
and the function $\xi$ must be a strictly increasing height function or a strictly decreasing height function on these intervals $[i_{1},i_{2}],\ldots, [i_{k-1},i_{k}]$, and the strictly increasing intervals and the strictly decreasing intervals appear alternatively.

Suppose that $\xi$ is a strictly increasing (respectively, decreasing) function on the interval $[i_{\ell},i_{\ell+1}]$ for a certain $1\leq \ell\leq k-1$. Then 
\begin{align*}
& a_{\ell}=\xi(i_{\ell})-1, \  a_{\ell+1} =\xi(i_{\ell+1})+1  \\
& (\text{respectively, } a_{\ell}=\xi(i_{\ell})+1, a_{\ell+1}=\xi(i_{\ell+1})-1). 
\end{align*}
Since $\xi$ is a strictly decreasing (respectively, increasing) function on the interval $[i_{\ell-1},i_\ell]$, we have
\begin{align*}
(a_{\ell}-a_{\ell-1})(a_{\ell+1}-a_{\ell})=(\xi(i_{\ell})-\xi(i_{\ell-1})-2)(\xi(i_{\ell+1})-\xi(i_{\ell})+2)<0,
\end{align*}
and 
\begin{align*}
& |a_{\ell+1}-a_{\ell}|=|\xi(i_{\ell+1})-\xi(i_{\ell})+2|=|(i_{\ell+1}-i_{\ell})+2|=i_{\ell+1}-i_{\ell}+2, \\
& |a_\ell-a_{\ell-1}|=|\xi(i_{\ell})-\xi(i_{\ell-1})-2|=|\xi(i_{\ell-1})+2-\xi(i_{\ell})|=i_\ell-i_{\ell-1}+2.
\end{align*}
Similarly, it holds for the case that $\xi$ is a strictly decreasing (respectively, increasing) function on the interval $[i_{\ell},i_{\ell+1}]$. 

Therefore, every HL-module has its highest $\ell$-weight monomial of the form (\ref{equation: HL}).
\end{proof}

\begin{remark}
The same HL-module may correspond to different height functions $\xi$, even in the same type. 
\end{remark}

\subsection{Hernandez-Leclerc modules and snake graphs}\label{HL-modules and snake graphs}
In a cluster algebra of finite type from a surface without punctures, arcs in an initial triangulation of the surface correspond to initial variables which are parameterized by negative simple roots \cite[Theorem 1.9]{FZ03}. The arc crossing initial arcs $-\alpha_i,-\alpha_{i+1},\ldots,-\alpha_j$ corresponds to the cluster variable parameterized by $\alpha_{i,j}$, where $i\leq j$.
Fix a height function $\xi$, we shall construct a  unique labeled snake graph for a non-initial HL-module.  In general we have the relationships shown in Figure \ref{four correspondences}.

From \cite{FST08}, also see \cite[Chapter 3]{S14}, it follows that up to rotation a triangulation of a surface determines a quiver, and the quiver completely reflects the configuration of arcs in the triangulation.

Let $\xi$ be a height function. Denote by $Q$ a connected full subquiver of $Q_\xi$, where we ignore the frozen vertices. Assume without loss of generality that the set of vertices in $Q$ is $\{i,i+1,\ldots,j\}$ from left to right in order.  We define the snake graph 
\[
\mathcal{G}=(G_i,G_{i+1},\ldots,G_j)
\]
associated to $Q$ as follows. The first two tiles are placed in the same horizontal line, that is, the east edge of $G_i$ and the west edge of $G_{i+1}$ are the same. For each $i+2 \leq \ell \leq j$, the tile $G_\ell$ is placed such that $(G_{\ell-2},G_{\ell-1},G_\ell)$ is straight if the $(\ell-1)$-th vertex is a source or sink, and otherwise $(G_{\ell-2},G_{\ell-1},G_\ell)$ is zigzag.  From now on, we use a digit ``1" to denote the cluster variable $x_1$, and so on. Label each tile $G_\ell$ by $\ell$ in the interior of the tile. For any two consecutive tiles $(G_\ell,G_{\ell+1})$, each edge is labeled near the edge obeying the following rules.
\begin{itemize}
\item[(i)] If $G_\ell$ and $G_{\ell+1}$ share exactly an edge $e_\ell$ and the edge is the east edge of $G_\ell$ and the west edge of $G_{\ell+1}$, then the north edge of $G_\ell$ is labeled by $(\ell+1)$ and the south edge of $G_{\ell+1}$ is labeled by $\ell$.
\item[(ii)] If $G_\ell$ and $G_{\ell+1}$ share exactly an edge $e_\ell$ and the edge is the north edge of $G_\ell$ and the south edge of $G_{\ell+1}$, then the east edge of $G_\ell$ is labeled by $(\ell+1)$ and the west edge of $G_{\ell+1}$ is labeled by $\ell$.  
\end{itemize}
We leave edges $_W\mathcal{G}$, $_S\mathcal{G}$, $\mathcal{G}^N$, and $\mathcal{G}^E$ no label.  In particular, every edge has no label for a snake graph consisting of a tile. 

The minimal (respectively, maximal) matching $P_{-}$ (respectively, $P_{+}$) of $\mathcal{G}$ is chosen as follows. If there is an arrow $(i+1) \to i$ in $Q$, $P_{-}$ is defined as the unique matching which contains only boundary edges and also contains $_W\mathcal{G}$, $P_{+}$ is the other matching with only boundary edges. Otherwise $P_{-}$ is defined as the unique matching which contains only boundary edges and contains $_S\mathcal{G}$, $P_{+}$ is the other matching with only boundary edges. If $Q$ consists of a single vertex, then we agree that $_S\mathcal{G}\in P_{-}$, $_W\mathcal{G}\in P_+$. 

\begin{definition}\label{HL and snake graph}
The labeled snake graph associated to the HL-module parameterized by $\alpha_{i,j}$ is the labeled snake graph determined by the connected full subquiver with vertex set $\{i,i+1,\ldots,j\}$ in $Q_\xi$. 
\end{definition}

The following example explains a snake graph associated to an HL-module.
\begin{example}\label{example34}
Continue our previous Example \ref{example23} and consider the cluster variable $x[\alpha_{1,7}]$ and the corresponding HL-module $L(Y_{1,-3}Y_{3,-7}Y_{6,-2}Y_{8,-6})$. The full subquiver of $Q_\xi$ with vertex set $\{1,2,3,4,5,6,7\}$ is the following quiver 

\[
\xymatrix{
1  \ar[dr]  &  2 \ar[l] \ar[r]  & 3 \ar[r]  \ar[d] & 4 \ar[r] \ar[d]  & 5 \ar[d]  & 6 \ar[l] \ar[dr]  & 7 \ar[l]  \ar[dr]  & \textcolor{red}{8} \ar[l]  \\
\textcolor{red}{1'} \ar[u]  & \textcolor{red}{2'} \ar[u] & \textcolor{red}{3'}  & \textcolor{red}{4'}  \ar[ul]  & \textcolor{red}{5'} \ar[ul]   & \textcolor{red}{6'}  \ar[u]  & \textcolor{red}{7'} \ar[u]  &  \textcolor{red}{8'}}.
\]

Following Definition \ref{HL and snake graph}, the labeled snake graph associated to $L(Y_{1,-3}Y_{3,-7}Y_{6,-2}Y_{8,-6})$ is shown as follows.
\begin{figure}[H]
\resizebox{.5\width}{.5\height}{
\begin{tikzpicture}
\draw (0,0) rectangle (1,1);
\draw (1,0) rectangle (2,1);
\draw (2,0) rectangle(3,1);
\draw (2,1) rectangle(3,2);
\draw (3,1) rectangle(4,2);
\draw (4,1) rectangle(5,2);
\draw (4,2) rectangle(5,3);
\node at (0.5,0.5) {1};
\node at (1.5,0.5) {2};
\node at (2.5,0.5) {3};
\node at (2.5,1.5) {4};
\node at (3.5,1.5) {5};
\node at (4.5,1.5) {6};
\node at (4.5,2.5) {7};
\node[above] at (0.5,0.9) {2};
\node[above] at (1.5,0.9) {3};
\node[below] at (1.5,0.1) {1};
\node[below] at (2.5,0.1) {2};
\node[left] at (2.1,1.5) {3};
\node[right] at (2.9,0.5) {4};
\node[below] at (3.5,1.1) {4};
\node[above] at (2.5,1.9) {5};
\node[below] at (4.5,1.1) {5};
\node[above] at (3.5,1.9) {6};
\node[left] at (4.1,2.5) {6};
\node[right] at (4.9,1.5) {7};
\end{tikzpicture}}
\end{figure}
\end{example}

The same HL-module may correspond to different labeled snake graphs, even in the same type. Consider a height function 
\[
\xi(1,2,3,4,5,6,7,8,9)=(-4,-5,-6,-5,-4,-3,-4,-5,-4)
\] 
in $A_{9}$. The quiver $Q_\xi$ is the following quiver.
\[
\xymatrix{
1  \ar[dr]  &  2 \ar[l] \ar[r]  & 3 \ar[r]  \ar[d] & 4 \ar[r] \ar[d]  & 5 \ar[d]  & 6 \ar[l] \ar[dr]  & 7 \ar[l]  \ar[r]  & 8 \ar[d] & 9 \ar[l]  \\
1' \ar[u]  & 2' \ar[u] & 3'  & 4'  \ar[ul]  & 5' \ar[ul]   & 6'  \ar[u]  & 7' \ar[u]  &  8'  & 9' \ar[u] }.
 \]
By Theorem \ref{BCTheorem1}, we have $\iota(x[\alpha_{1,8}])= [L(Y_{1,-3}Y_{3,-7}Y_{6,-2}Y_{8,-6})]$. Using Definition \ref{HL and snake graph}, the labeled snake graph associated to $L(Y_{1,-3}Y_{3,-7}Y_{6,-2}Y_{8,-6})$ is shown as follows.
\begin{figure}[H]
\resizebox{.5\width}{.5\height}{
\begin{minipage}{0.5\linewidth}
\begin{tikzpicture}
\draw (0,0) rectangle (1,1);
\draw (1,0) rectangle (2,1);
\draw (2,0) rectangle(3,1);
\draw (2,1) rectangle(3,2);
\draw (3,1) rectangle(4,2);
\draw (4,1) rectangle(5,2);
\draw (4,2) rectangle(5,3);
\draw (4,3) rectangle(5,4);
\node at (0.5,0.5) {1};
\node at (1.5,0.5) {2};
\node at (2.5,0.5) {3};
\node at (2.5,1.5) {4};
\node at (3.5,1.5) {5};
\node at (4.5,1.5) {6};
\node at (4.5,2.5) {7};
\node at (4.5,3.5) {8};
\node[above] at (0.5,0.9) {2};
\node[above] at (1.5,0.9) {3};
\node[below] at (1.5,0.1) {1};
\node[below] at (2.5,0.1) {2};
\node[left] at (2.1,1.5) {3};
\node[right] at (2.9,0.5) {4};
\node[below] at (3.5,1.1) {4};
\node[above] at (2.5,1.9) {5};
\node[below] at (4.5,1.1) {5};
\node[above] at (3.5,1.9) {6};
\node[left] at (4.1,2.5) {6};
\node[right] at (4.9,1.5) {7};
\node[left] at (4.1,3.5) {7};
\node[right] at (4.9,2.5) {8};
\end{tikzpicture}
\end{minipage}}
\end{figure}

\begin{lemma}
If a height function $\xi$ is fixed, then an HL-module corresponding to a non-initial cluster variable (excluding frozen variables) determines a unique labeled snake graph. 
\end{lemma}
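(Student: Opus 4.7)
The plan is to reduce the uniqueness of the labeled snake graph to the uniqueness of the pair $(i,j)$ indexing the cluster variable $x[\alpha_{i,j}]$, and then to verify that Definition \ref{HL and snake graph} prescribes every piece of data of $\mathcal{G}$ with no residual choice. The first step is an immediate consequence of Theorem \ref{BCTheorem1}: the map $\iota: \mathcal{A}({\bf \widetilde{x}}, Q_\xi) \to \mathcal{K}_0(\xi)$ is a ring isomorphism that restricts to a bijection between the set of cluster variables and the set of isomorphism classes of real prime simple objects of $\mathscr{C}_\xi$. Combined with Theorem \ref{cluster variables bijection with almost positive roots}, under which distinct almost positive roots label distinct cluster variables, we conclude that the class of a non-initial, non-frozen HL-module $M$ pins down a unique pair $(i,j)$ with $1 \le i \le j \le n$ satisfying $\iota(x[\alpha_{i,j}]) = [M]$.

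Second, I would observe that once $\xi$ is fixed, the connected full subquiver of $Q_\xi$ on the mutable vertex set $\{i, i+1, \ldots, j\}$ (obtained by deleting the frozen vertices and all arrows incident to them) is intrinsically determined. The construction in Definition \ref{HL and snake graph} then reads every feature of $\mathcal{G}$ directly off of this subquiver: the shape of the tile sequence is prescribed by the source/sink status of the intermediate vertices $i+1, \ldots, j-1$ (straight if the vertex is a source or sink, zigzag otherwise); the tile labels are exactly the vertex indices $i, i+1, \ldots, j$; the interior and boundary edge labels follow the rules (i) and (ii) for each pair of consecutive tiles; and the minimal matching $P_-$, hence also $P_+$, is fixed by the direction of the arrow between $i$ and $i+1$ (with the stated convention handling the degenerate case $i = j$).

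The only subtle point I anticipate is that the word \emph{zigzag} by itself does not single out between two possible placements at each step, so one must confirm that the planar embedding of $\mathcal{G}$ is itself canonical. I would handle this by appealing to the standard snake graph convention already in force and illustrated in Example \ref{example34}, namely that each successive tile is placed only to the east or north of its predecessor, so that the snake moves monotonically into the upper right. Under this convention, the source/sink sequence dictates a unique sequence of east/north moves starting from the horizontal placement of the first two tiles $G_i$ and $G_{i+1}$, so $\mathcal{G}$ is unique up to translation. Assembling these three steps proves that the labeled snake graph associated to the HL-module is unique.
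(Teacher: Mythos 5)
Your proposal is correct and follows essentially the same route as the paper: the paper's proof likewise invokes Theorem \ref{BCTheorem1} to put non-initial HL-modules in bijection with almost positive roots (hence with the pairs $(i,j)$) and then declares that the uniqueness follows from Definition \ref{HL and snake graph}. Your additional verification that the north/east placement convention for snake graphs resolves the straight-versus-zigzag placement unambiguously is a detail the paper leaves implicit, but it does not change the argument.
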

\begin{proof}
Since a height function $\xi$ is fixed, by Theorem \ref{BCTheorem1}, we have a cluster algebra isomorphism  $\iota: \mathcal{A}({\bf \widetilde{x}},Q_\xi) \to \mathcal{K}_0(\xi)$ and HL-modules corresponding to non-initial cluster variables are in bijection with almost positive roots. The result follows from Definition \ref{HL and snake graph}.
\end{proof}

\subsection{$q$-characters of Hernandez-Leclerc modules in term of perfect matchings of snake graphs}

To our purpose, we first give the dual version of Theorem \ref{Rab3.4} by the following theorem whose proof is completely similar to one given by Rabideau in \cite[Theorem 3.4]{Rab18} except using the following formula from \cite{CS13,CS15}.
\begin{align}\label{dual formula}
F(\mathcal{G}[a_1,a_2,\ldots,a_n])= y_{34} F(\mathcal{G}[a_1,a_2,\ldots,a_{n-1}]) F(\mathcal{H}_n) + y_{56}F(\mathcal{G}[a_1,a_2,\ldots,a_{n-2}]),  
\end{align}
where the variables $y_{34}$ and $y_{56}$ are defined as follows, $y_0=1$.
\begin{align*}
y_{34} =\begin{cases}
1  & \text{if $n$ is odd}, \\
y_{\tau_{\ell_{n-1}}}  & \text{if $n$ is even},
\end{cases} \quad 
y_{56} =\begin{cases}
\prod_{j=\ell_{n-2}}^{\ell_n-1} y_{\tau_j} & \text{if $n$ is odd}, \\
1  & \text{if $n$ is even}.
\end{cases}
\end{align*}

\begin{theorem}\label{dual case of F-polynomial}
Suppose that in $\mathcal{G}[a_1,a_2,\ldots,a_n]$ $(a_1>1)$ the minimal perfect matching contains the west edge of the first tile. Then the  $F$-polynomial associated to $\mathcal{G}[a_1,a_2,\ldots,a_n]$ is given by the following equation
\[
F(\mathcal{G}[a_1,a_2,\ldots,a_n]) = \begin{cases}
C^{-1}_n\mathcal{N}[\mathcal{L}_1,\mathcal{L}_2,\ldots,\mathcal{L}_n] & \text{if $n$ is odd}, \\
\mathcal{N}[\mathcal{L}_1,\mathcal{L}_2,\ldots,\mathcal{L}_n]  & \text{if $n$ is even},
\end{cases}
\]
where each $\mathcal{L}_i=\varphi_iC_i$ and 
\begin{align*}
C_i=\begin{cases}
\prod\limits_{j=\ell_1}^{\ell_{i}-1} y^{-1}_{\tau_j}  & \text{if $i$ is odd}, \\
\prod\limits_{j=\ell_1}^{\ell_{i-1}} y_{\tau_j}  & \text{if $i$ is even}, \\
\end{cases} \quad 
\varphi_i = \begin{cases}
\sum\limits_{k=\ell_{i-1}+1}^{\ell_i} \prod\limits_{j=k}^{\ell_i-1} y_{\tau_j}  & \text{if $i$ is odd}, \\
\sum\limits_{k=\ell_{i-1}}^{\ell_i-1} \prod\limits_{j=\ell_{i-1}+1}^{k} y_{\tau_j}  & \text{if $i$ is even}. \\
\end{cases} 
\end{align*} 
\end{theorem}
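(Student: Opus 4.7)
The plan is to adapt Rabideau's proof of Theorem~\ref{Rab3.4} step-for-step, interchanging the roles of the two parities everywhere that Rabideau's hypothesis $_S\mathcal{G}\in P_{-}$ is used. The engine of the proof is the three-term recursion~(\ref{dual formula}) for $F$-polynomials of snake graphs together with the continued-fraction recursion $\mathcal{N}[\mathcal{L}_1,\ldots,\mathcal{L}_n]=\mathcal{L}_n\mathcal{N}[\mathcal{L}_1,\ldots,\mathcal{L}_{n-1}]+\mathcal{N}[\mathcal{L}_1,\ldots,\mathcal{L}_{n-2}]$; matching these two recursions reduces the theorem to checking a pair of monomial identities at each inductive step, with the parity of $n$ selecting which identity plays which role.

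I would begin with the base case $n=1$: the graph $\mathcal{G}[a_1]$ is a zigzag piece with $a_1-1$ tiles. Because $P_{-}$ contains $_W\mathcal{G}$ rather than a pair of opposite boundary edges of $G_1$, Lemma~\ref{Rab3.3} gives $F(\mathcal{G}[a_1])=\sum_{k=1}^{a_1}\prod_{j=k}^{a_1-1}y_{\tau_j}$, and with $\ell_0=0$, $\ell_1=a_1$ the dual formulas produce $C_1=1$ (empty product) and $\varphi_1=\sum_{k=1}^{a_1}\prod_{j=k}^{a_1-1}y_{\tau_j}$, so $C_1^{-1}\mathcal{N}[\mathcal{L}_1]=\mathcal{L}_1=\varphi_1$, matching the claim for odd $n$. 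The case $n=2$ is handled by applying~(\ref{dual formula}) directly, with $\mathcal{H}_2$ a zigzag piece treated by Lemma~\ref{Rab3.3}.

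For the inductive step, I would substitute the inductive hypotheses for $F(\mathcal{G}[a_1,\ldots,a_{n-1}])$ and $F(\mathcal{G}[a_1,\ldots,a_{n-2}])$ into the right-hand side of~(\ref{dual formula}), factor out an appropriate monomial in the $y_{\tau_j}$, and compare with $C_n^{\eta_n}\mathcal{N}[\mathcal{L}_1,\ldots,\mathcal{L}_n]$, where $\eta_n\in\{-1,0\}$ is dictated by the parity of $n$. The comparison splits into two verifications: first, that $\varphi_n$ coincides with $F(\mathcal{H}_n)$ up to a monomial prefactor (this is Lemma~\ref{Rab3.3} applied to the zigzag piece $\mathcal{H}_n$ with the boundary condition on its first tile inherited from $P_{-}$); and second, that the coefficients $y_{34}$ and $y_{56}$ of~(\ref{dual formula}) implement exactly the shifts $C_{n-1}^{\eta_{n-1}}\mapsto C_n^{\eta_n}C_n$ and $C_{n-2}^{\eta_{n-2}}\mapsto C_n^{\eta_n}$, respectively. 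Both shifts are telescoping identities among monomials in $y_{\tau_j}$ whose exponents can be read directly from the definitions of $\ell_i$.

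The hard part will be the bookkeeping of parity: swapping ``odd'' and ``even'' in $\varphi_i$ forces a nontrivial reindexing in $C_i$, since our $C_i$ ranges from $\ell_1$ rather than from $1$, so the empty $C_1$ is preserved but higher $C_i$ differ from Rabideau's by a factor $\prod_{j=1}^{\ell_1}y_{\tau_j}^{\pm 1}$ that must be absorbed uniformly in the induction. Organising the calculation so that odd $n$ and even $n$ are handled in parallel, rather than in two independently tracked subcases, will be the most delicate point, but once the two monomial identities above are checked in one parity the other follows by the same telescoping with signs reversed, closing the induction.
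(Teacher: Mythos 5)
Your proposal follows essentially the same route as the paper's proof: induction on $n$ via the dual three-term recursion (\ref{dual formula}) matched against the continued-fraction recursion for $\mathcal{N}$, with the base cases $n=1,2$ handled by Lemma \ref{Rab3.3} and the inductive step reduced to identifying $F(\mathcal{H}_n)$ with $\varphi_n$ (by checking how $P_-$ restricts to $\mathcal{H}_n$ in each parity) plus the telescoping monomial identities absorbing $y_{34}$ and $y_{56}$ into the $C_i$. The only minor discrepancy is that in the paper's argument $F(\mathcal{H}_n)=\varphi_n$ holds exactly, with no monomial prefactor; otherwise the plan is correct.
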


\begin{proof}
We proceed by an induction on $n$.  Consider the case that $n=1$, using Lemma \ref{Rab3.3} and the fact that the number of tiles in $\mathcal{G}[a_1]$ is $a_1-1$, we have 
\[
F(\mathcal{G}[a_1])=\sum_{k=1}^{a_1} \prod_{j=k}^{a_1-1} y_{\tau_j}= \varphi_1 = C^{-1}_1  C_1 \varphi_1 = C^{-1}_1 \mathcal{L}_1.
\]
For $n=2$, using (\ref{dual formula}) and the fact that $\ell_1=a_1$ and $C_1=1$, we have 
\begin{align*}
F(\mathcal{G}[a_1,a_2]) & =  y_{34} F(\mathcal{G}[a_1]) F(\mathcal{H}_2) + y_{56} = y_{\tau_{a_1}} (C^{-1} \mathcal{L}_1) \varphi_2 + 1 \\
& = \mathcal{L}_1\mathcal{L}_2+1 =\mathcal{N}[\mathcal{L}_1,\mathcal{L}_2].
\end{align*}

Assume that $n$ is odd and our formula holds for $m<n$. The minimal matching of $\mathcal{H}_n$ is the matching which inherits from $P_{-}$, shown in Figure \ref{Hn minimal perfect matching}, so $F(\mathcal{H}_n)=\varphi_n$. Then by (\ref{dual formula})

\begin{align*}
F(\mathcal{G}[a_1,a_2,\ldots,a_n]) & =F(\mathcal{G}[a_1,a_2,\ldots,a_{n-1}]) F(\mathcal{H}_n) + \prod\nolimits_{j=\ell_{n-2}}^{\ell_n-1} y_{\tau_j} F(\mathcal{G}[a_1,a_2,\ldots,a_{n-2}]) \\
& =  \mathcal{N}[\mathcal{L}_1,\mathcal{L}_2,\ldots,\mathcal{L}_{n-1}] \varphi_n +  \prod\nolimits_{j=\ell_{n-2}}^{\ell_n-1} y_{\tau_j} C^{-1}_{n-2}\mathcal{N}[\mathcal{L}_1,\mathcal{L}_2,\ldots,\mathcal{L}_{n-2}]  \\
& = \mathcal{N}[\mathcal{L}_1,\mathcal{L}_2,\ldots,\mathcal{L}_{n-1}] \varphi_n +  C^{-1}_n\mathcal{N}[\mathcal{L}_1,\mathcal{L}_2,\ldots,\mathcal{L}_{n-2}]  \\
& =  C^{-1}_n (\mathcal{L}_n\mathcal{N}[\mathcal{L}_1,\mathcal{L}_2,\ldots,\mathcal{L}_{n-1}]  + \mathcal{N}[\mathcal{L}_1,\mathcal{L}_2,\ldots,\mathcal{L}_{n-2}] ) \\
& = C^{-1}_n \mathcal{N}[\mathcal{L}_1,\mathcal{L}_2,\ldots,\mathcal{L}_n].
\end{align*}

\begin{figure}[H]
\resizebox{.5\width}{.5\height}{
\begin{minipage}{0.5\linewidth}
\begin{tikzpicture}
\draw[dashed] (1,0) rectangle (2,1);
\draw (2,0) rectangle(3,1);
\draw (2,1) rectangle(3,2);
\draw (3,1) rectangle(4,2);
\draw (3,2) rectangle(4,3);
\draw (5,3) rectangle(6,4);
\node at (4.5,3) {$\pmb{\iddots}$};
\draw[red,ultra thick] (2,0) -- (3,0);
\draw[red,ultra thick] (2,1) -- (2,2);
\draw[red,ultra thick] (3,1) -- (4,1);
\draw[red,ultra thick] (3,2) -- (3,3);
\draw[red,ultra thick] (5,3) -- (6,3);
\draw[red,ultra thick] (5,4) -- (6,4);
\end{tikzpicture}
\end{minipage}
\begin{minipage}{0.5\linewidth}
\begin{tikzpicture}
\draw[dashed] (1,0) rectangle (2,1);
\draw (2,0) rectangle(3,1);
\draw (2,1) rectangle(3,2);
\draw (3,1) rectangle(4,2);
\draw (3,2) rectangle(4,3);
\draw (5,3) rectangle(6,4);
\node at (4.5,3) {$\pmb{\iddots}$};
\draw[red,ultra thick] (2,0) -- (3,0);
\draw[red,ultra thick] (2,1) -- (2,2);
\draw[red,ultra thick] (3,1) -- (4,1);
\draw[red,ultra thick] (3,2) -- (3,3);
\draw[red,ultra thick] (5,3) -- (5,4);
\draw[red,ultra thick] (6,3) -- (6,4);
\end{tikzpicture}
\end{minipage}}
\caption{The minimal perfect matching of $\mathcal{H}_n$. The dashed tile is a sign-changed tile. }\label{Hn minimal perfect matching}
\end{figure}

Assume that $n$ is even and our formula holds for $m<n$. The minimal matching of $\mathcal{H}_n$ is a union of the matching which inherits from $P_{-}$ and $e_{\ell_{n-1}}$, shown in Figure \ref{Hn minimal perfect matching2}, so $F(\mathcal{H}_n)=\varphi_n$. Then by (\ref{dual formula})
\begin{align*}
F(\mathcal{G}[a_1,a_2,\ldots,a_n]) & =y_{\tau_{\ell_{n-1}}} F(\mathcal{G}[a_1,a_2,\ldots,a_{n-1}]) F(\mathcal{H}_n) + F(\mathcal{G}[a_1,a_2,\ldots,a_{n-2}]) \\
& =  y_{\tau_{\ell_{n-1}}} C^{-1}_{n-1}\mathcal{N}[\mathcal{L}_1,\mathcal{L}_2,\ldots,\mathcal{L}_{n-1}] \varphi_n + \mathcal{N}[\mathcal{L}_1,\mathcal{L}_2,\ldots,\mathcal{L}_{n-2}]  \\
& = \mathcal{N}[\mathcal{L}_1,\mathcal{L}_2,\ldots,\mathcal{L}_{n-1}] C_n \varphi_n + \mathcal{N}[\mathcal{L}_1,\mathcal{L}_2,\ldots,\mathcal{L}_{n-2}]  \\
& =  \mathcal{L}_n\mathcal{N}[\mathcal{L}_1,\mathcal{L}_2,\ldots,\mathcal{L}_{n-1}]  + \mathcal{N}[\mathcal{L}_1,\mathcal{L}_2,\ldots,\mathcal{L}_{n-2}] \\
& =\mathcal{N}[\mathcal{L}_1,\mathcal{L}_2,\ldots,\mathcal{L}_n].
\end{align*}
\begin{figure}[H]
\resizebox{.5\width}{.5\height}{
\begin{minipage}{0.5\linewidth}
\begin{tikzpicture}
\draw[dashed] (1,0) rectangle (2,1);
\draw (2,0) rectangle(3,1);
\draw (2,1) rectangle(3,2);
\draw (3,1) rectangle(4,2);
\draw (3,2) rectangle(4,3);
\draw (5,3) rectangle(6,4);
\node at (4.5,3) {$\pmb{\iddots}$};
\draw[red,ultra thick] (2,0) -- (2,1);
\draw[red,ultra thick] (3,0) -- (3,1);
\draw[red,ultra thick] (2,2) -- (3,2);
\draw[red,ultra thick] (4,1) -- (4,2);
\draw[red,ultra thick] (3,3) -- (4,3);
\draw[red,ultra thick] (5,4) -- (6,4);
\end{tikzpicture}
\end{minipage}
\begin{minipage}{0.5\linewidth}
\begin{tikzpicture}
\draw[dashed] (1,0) rectangle (2,1);
\draw (2,0) rectangle(3,1);
\draw (2,1) rectangle(3,2);
\draw (3,1) rectangle(4,2);
\draw (3,2) rectangle(4,3);
\draw (5,3) rectangle(6,4);
\node at (4.5,3) {$\pmb{\iddots}$};
\draw[red,ultra thick] (2,0) -- (2,1);
\draw[red,ultra thick] (3,0) -- (3,1);
\draw[red,ultra thick] (2,2) -- (3,2);
\draw[red,ultra thick] (4,1) -- (4,2);
\draw[red,ultra thick] (3,3) -- (4,3);
\draw[red,ultra thick] (6,3) -- (6,4);
\end{tikzpicture}
\end{minipage}}
\caption{The minimal perfect matching of $\mathcal{H}_n$. The dashed tile is a sign-changed tile. }\label{Hn minimal perfect matching2}
\end{figure}
\end{proof}

In practice, we always assume that $a_1>1$ for any continued fraction $[a_1,a_2,\ldots,a_n]$. Otherwise, we take the rotation of a snake graph by 180 degrees or the flips at the lines $y=-x$,  it follows from \cite[Proposition 3.1 (b) and (c)]{CS20} that $\mathcal{G}[a_1,a_2,\ldots,a_n]\cong \mathcal{G}[a_n,\ldots,a_2,a_1]$. In addition, $[a_1,a_2,\ldots,a_{n-1},1]=[a_1,a_2,\ldots,a_{n-1}+1]$.

\begin{lemma}\label{Lemma36}
Let $\mathcal{G}$ be the snake graph associated to the HL-module  parameterized by $\alpha_{i,j}$ and $F=\sum_{P\in {\rm Match}(\mathcal{G})} y(P)$ be its F-polynomial. Then
\[
F|_{\mathbb{P}} = y(P_{+})|_{\mathbb{P}}= (y_iy_{i+1} \cdots y_j)|_{\mathbb{P}}.
\]
\end{lemma}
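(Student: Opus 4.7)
The plan is as follows. The second equality is immediate from the construction: $P_+$ is by definition the maximal matching, so $P_-\ominus P_+$ encloses every tile of $\mathcal{G}$, and by Definition \ref{HL and snake graph} the tiles of $\mathcal{G}$ are labeled $i, i+1, \dots, j$. Hence $y(P_+) = \prod_{\ell=i}^{j} y_\ell$ by Definition 4.9 of \cite{MSW11}, giving the desired identity.

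For the first equality, I would unpack the tropical sum: $F|_{\mathbb{P}} = \bigoplus_{P\in \mathrm{Match}(\mathcal{G})} y(P)|_{\mathbb{P}}$ is, by the definition of $\mathrm{Trop}(x'_j : j\in I)$, the Laurent monomial whose exponent of each $x'_k$ is the componentwise minimum over all perfect matchings $P$. Since $y(P) = \prod_{\tau \in J_P} y_\tau$ where $J_P \subseteq \{i,\dots,j\}$ is the set of tiles enclosed by $P_-\ominus P$, the claim $F|_{\mathbb{P}} = \prod_{\ell=i}^{j} y_\ell|_{\mathbb{P}}$ reduces to showing that for every matching $P$, the ``missing factor''
\[
\frac{y(P_+)|_{\mathbb{P}}}{y(P)|_{\mathbb{P}}} = \prod_{\tau\in\{i,\dots,j\}\setminus J_P} y_\tau|_{\mathbb{P}}
\]
has componentwise non-positive exponents in each $x'_k$; this is exactly the statement that including more tiles can only \emph{decrease} exponents, so the tropical minimum is attained at $P_+$.

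To verify this non-positivity, I would combine two ingredients. First, the explicit description $y_\tau|_{\mathbb{P}} = \prod_{k\in I}(x'_k)^{b_{k',\tau}}$ read off from the arrows of $Q_\xi$ between mutable vertex $\tau$ and frozen vertices in Section \ref{subcategory and Hernandez-Leclerc modules}; second, the combinatorial description of which subsets $J_P$ of $\{i,\ldots,j\}$ actually arise as enclosed-tile sets of matchings of the snake graph $\mathcal{G}$. An alternative (and perhaps cleaner) route is to apply Theorem \ref{dual case of F-polynomial}, writing $F$ as the numerator of a continued fraction $[\mathcal{L}_1,\dots,\mathcal{L}_n]$ of Laurent polynomials, and then to tropicalize: each $\mathcal{L}_i|_{\mathbb{P}}$ collapses to its leading tropical monomial, the continued-fraction recursion $\mathcal{N}[\mathcal{L}_1,\dots,\mathcal{L}_n] = \mathcal{L}_n\mathcal{N}[\mathcal{L}_1,\dots,\mathcal{L}_{n-1}] + \mathcal{N}[\mathcal{L}_1,\dots,\mathcal{L}_{n-2}]$ becomes a tropical recursion, and an induction on $n$ shows the result matches $\prod_{\ell=i}^{j} y_\ell|_{\mathbb{P}}$.

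The main obstacle is the sign bookkeeping, since the orientations in $Q_\xi$ depend on $\xi$ through the factors $\delta_{i,i_\diamond}$ and each individual $y_\tau|_{\mathbb{P}}$ may have both positive and negative $x'_k$-exponents. What saves us is the coordinated structure between $\mathcal{G}$ and $Q_\xi$ (both built from the same height function $\xi$): the tiles where $y_\tau|_{\mathbb{P}}$ contributes positively to some $x'_k$ are balanced against neighboring tiles through the labeling rules (i) and (ii) in Section \ref{HL-modules and snake graphs}, so that the full product $\prod_{\ell=i}^{j} y_\ell|_{\mathbb{P}}$ achieves the componentwise tropical minimum, while any partial product $\prod_{\tau\in J_P} y_\tau|_{\mathbb{P}}$ coming from a matching $P\neq P_+$ has strictly larger exponents in at least one $x'_k$.
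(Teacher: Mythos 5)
Your reduction is the right one, and your overall strategy coincides with the paper's: both arguments come down to showing that enlarging the set of enclosed tiles can only (weakly) decrease the exponent of each $x'_k$ in the tropicalized height monomial, so that the tropical minimum is attained at $P_+$. Your treatment of the second equality ($P_-\ominus P_+$ is the boundary cycle, which encloses every tile, so $y(P_+)=\prod_{\ell=i}^j y_\ell$) is fine.

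The genuine gap is that the crucial verification is never carried out. The paper proves the monotonicity by walking up the poset of perfect matchings one tile-turn at a time ($y(P')=y_\ell\,y(P)$ when $P'$ covers $P$ by turning $G_\ell$) and then performing an explicit eight-case analysis of the possible local arrow configurations around the vertex $\ell$ in $Q_\xi$, showing in each case that $y(P')|_{\mathbb{P}}$ equals $y(P)|_{\mathbb{P}}$ or equals $x'^{-1}_\ell\,y(P)|_{\mathbb{P}}$; one of these cases even requires knowing which neighboring tiles have already been turned, so the claim is not about individual factors $y_\tau|_{\mathbb{P}}$ but about the order of turns along a chain to $P_+$. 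Your proposal replaces all of this with the assertion that ``the coordinated structure between $\mathcal{G}$ and $Q_\xi$\dots saves us'' and that positive contributions are ``balanced against neighboring tiles'' --- but that assertion \emph{is} the lemma; without the case analysis (or a worked-out tropicalization of the continued-fraction recursion, which you mention but do not execute and which would require the same explicit knowledge of the $y_\tau|_{\mathbb{P}}$) nothing has been proved. A secondary error: your closing claim that any $P\neq P_+$ has \emph{strictly} larger exponent in at least one $x'_k$ is false --- in several of the paper's cases $y(P')|_{\mathbb{P}}=y(P)|_{\mathbb{P}}$ exactly, so distinct matchings can share the same tropical evaluation; only the weak inequality holds, and only the weak inequality is needed.
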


\begin{proof}
Following \cite{MSW11}, all perfect matchings in $\text{Match}(\mathcal{G})$ form a poset with the minimal perfect matching $P_{-}$ and the maximal perfect matching $P_{+}$. 
Recall that in \cite{Rab18} a tile $G$ can be \textit{turned} if two of its edges are in a perfect matching $P$. Let $P'$ be the perfect matching obtained by replacing the two edges of $G$ in $P$ with the other two edges of $G$. We say that $P'$ is obtained from $P$ by turning the tile $G$. Every perfect matching is obtained from $P_{-}$ or $P_{+}$ by turning a sequence of tiles. 

The height monomial $y(P')$ of $P'$ is defined recursively by $y(P_{-}) = 1$ and if $P'$ is above $P$ and obtained by turning a tile $G_\ell$ then $y(P')=y_\ell y(P)$. In the following, we will prove that for each step of turning a tile, $y(P)|_{\mathbb{P}}\leq y(P')|_{\mathbb{P}}$, meaning that $y(P')|_{\mathbb{P}}$ contains all possible factors with multiplicities appearing in $y(P)|_{\mathbb{P}}$. As a conclusion, $F|_\mathbb{P} = y(P_{+})|_\mathbb{P}$.

In our setting, if $P$ is above $P_{-}$ and obtained by turning a tile $G_\ell$, then $y(P)=y_{\ell}$ and either $\ell\not\in \{i,j\}$ is a source or $\ell\in \{i,j\}$ such that $\ell \to l$ for a certain $l\in I$. In general, if $P'$ is above $P$ and obtained by turning a tile $G_\ell$ then $y(P')=y_{\ell}y(P)$, and at least one of the following two cases occur:
\begin{itemize}
\item[(i)] either $\ell\not\in \{i,j\}$ is a source or $\ell\in \{i,j\}$ such that $\ell \to l$ for a certain $l\in I$, where $y_l$ is not a factor in $y(P)$,
\item[(ii)] there exists a certain $l\in I$ (not unique) such that $l\to \ell$ in $Q_\xi$, where $y_l$ is a factor in $y(P)$. 
\end{itemize}

For the case (i), $y_\ell= x'_\ell$ or all possible factors in the denominator of $y_\ell$ cannot be cancelled by $y(P)$, and hence  $y(P)|_{\mathbb{P}}\leq y(P')|_{\mathbb{P}}$. For the case (ii), all possible arrows near the vertex $\ell$ are shown as follows.
\begin{align*}
& \xymatrix{
l \ar[r] \ar[d]  & \ell \ar[r] \ar[d] & k \ar[d] \\
l' & \ell  \ar[ul] & k' \ar[ul] }, 
\quad
\xymatrix{
l \ar[r] \ar[d]  & \ell \ar[d] & k \ar[l]  \\
l' & \ell'  \ar[ul] & k' \ar[u] },
 \quad
\xymatrix{
l \ar[r] & \ell \ar[r] \ar[d] & k \ar[d] \\
l' \ar[u]  & \ell' &  k' \ar[ul] },  
\quad 
\xymatrix{
l \ar[r] & \ell \ar[d] & k \ar[l] \\
l' \ar[u]  & \ell' &  k' \ar[u] }, 
\\
& \xymatrix{
k \ar[dr] & \ell \ar[l] \ar[dr] & l \ar[l] \\
k'\ar[u] & \ell' \ar[u] & l' \ar[u] },  
\quad
\xymatrix{
k \ar[d] & \ell \ar[l] \ar[dr] & l \ar[l] \\
k' & \ell' \ar[u] & l' \ar[u] },  
\quad 
\xymatrix{
k \ar[d] \ar[r] & \ell  \ar[d] & l \ar[l] \\
k' & \ell' \ar[ul] & l' \ar[u] },  
\quad
\xymatrix{
k  \ar[r] & \ell  \ar[d] & l \ar[l] \\
k' \ar[u] & \ell' & l' \ar[u] }.
\end{align*}
So checking case by case from up to down and from left to right
\begin{align*}
& y(P')|_{\mathbb{P}}=(\frac{x'_k}{x'_\ell}y(P))|_{\mathbb{P}}= y(P)|_{\mathbb{P}}, \text{ because $x'_\ell$ is cancelled by the numerator in $y_l$}, \\
& y(P')|_{\mathbb{P}}=(\frac{1}{x'_\ell }y(P))|_{\mathbb{P}}= y(P)|_{\mathbb{P}}, \text{ because $x'_\ell$ is cancelled by the numerator in $y_l$}, \\
& y(P')|_{\mathbb{P}}=(\frac{x'_k}{x'_\ell}y(P))|_{\mathbb{P}}= x'^{-1}_\ell y(P)|_{\mathbb{P}}, \\
& y(P')|_{\mathbb{P}}=(\frac{1}{x'_\ell}y(P))|_{\mathbb{P}}= x'^{-1}_\ell y(P)|_{\mathbb{P}}, \\
& y(P')|_{\mathbb{P}}=(\frac{x'_\ell}{x'_l}y(P))|_{\mathbb{P}}= y(P)|_{\mathbb{P}}, \text{ because $x'_l$ is cancelled by the numerator in $y_l$}, \\
& y(P')|_{\mathbb{P}}=(\frac{x'_\ell}{x'_l}y(P))|_{\mathbb{P}}= y(P)|_{\mathbb{P}}, \text{ because $x'_l$ is cancelled by the numerator in $y_l$}, \\
& y(P')|_{\mathbb{P}}=(\frac{1}{x'_\ell}y(P))|_{\mathbb{P}}= y(P)|_{\mathbb{P}}, \text{ because $x'_\ell$ is cancelled by the numerator in $y_k$}, \\
& y(P')|_{\mathbb{P}}=(\frac{1}{x'_\ell}y(P))|_{\mathbb{P}}= x'^{-1}_\ell y(P)|_{\mathbb{P}},
\end{align*}
the seventh equation is because both $G_l$ and $G_k$ have been turned from $P_{-}$ before turning the tile $G_\ell$. 
\end{proof}

\begin{theorem}\label{snake graph formula}
Let $\mathscr{C}_\xi$ be the full subcategory introduced by Brito and Chari \cite{BC19} and $\mathcal{G}$ be the labeled snake graph associated to $x[\alpha_{i,j}]$. Then 
\begin{align*}
x[\alpha_{i,j}]  = \frac{1}{ \prod_{\ell=i}^j x_\ell}  \left(\frac{\sum_{P\in {\rm Match}(\mathcal{G})}  x(P) y(P) }{\bigoplus_{P\in {\rm Match}(\mathcal{G})} y(P)}\right),
\end{align*}
where the sign $\oplus$ appearing in the denominator refers to the (auxiliary) addition in a tropical semifield $\mathbb{P}={\rm Trop}(u_1,u_2,\ldots,u_r)$ $(r\in \mathbb{Z}_{\geq 1})$ and $y_\ell = \prod_{j \to \ell} u_j \prod_{\ell \to j} u^{-1}_j$ is a Laurent monomial in $r$ variables $u_1,u_2,\ldots,u_r$ for $i\leq \ell \leq j$. In particular, 
\begin{align*}
\chi_q(\iota(x[\alpha_{i,j}])) = \frac{1}{ \prod_{\ell=i}^j \chi_q(\iota(x_\ell))} \chi_q\left(\iota(\frac{\sum_{P\in {\rm Match}(\mathcal{G})}  x(P) y(P) }{\bigoplus_{P\in {\rm Match}(\mathcal{G})} y(P)})\right).
\end{align*}
\end{theorem}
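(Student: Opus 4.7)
The plan is to realize the cluster algebra $\mathcal{A}({\bf \widetilde{x}}, Q_\xi)$ as a cluster algebra from an unpunctured polygon and then invoke the Musiker-Schiffler-Williams expansion formula \cite{MSW11, MSW13}. Since $\mathcal{A}({\bf \widetilde{x}}, Q_\xi)$ is of finite cluster type $A_n$, by \cite{FST08} the initial seed $({\bf \widetilde{x}}, Q_\xi)$ arises from a triangulation $T$ of an $(n+3)$-gon whose $n$ interior diagonals carry the cluster variables $x_1, \ldots, x_n$ and whose boundary arcs carry the frozen variables $x'_1, \ldots, x'_n$. First I would construct $T$ explicitly from the height function $\xi$: the zigzag of triangles in $T$ is dictated by the sequence of sources and sinks in $Q_\xi$, so that the adjacency of the interior diagonals reproduces the mutable part of $Q_\xi$ and each $x'_i$ labels the appropriate boundary edge.

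Next I would identify the arc $\gamma_{i,j}$ associated to $x[\alpha_{i,j}]$ as the unique arc (up to isotopy) that crosses exactly the diagonals $i, i+1, \ldots, j$ in this order. This follows from Theorem \ref{cluster variables bijection with almost positive roots} together with the surface dictionary of \cite{FST08}, since $x[\alpha_{i,j}]$ is produced from the initial cluster by the mutation sequence $i, i+1, \ldots, j$, which corresponds to successive flips along $\gamma_{i,j}$. I would then verify that the MSW snake graph $\mathcal{G}(T, \gamma_{i,j})$ agrees, as a labeled graph, with the snake graph of Definition \ref{HL and snake graph}. The crucial combinatorial point is that $(G_{\ell-2}, G_{\ell-1}, G_\ell)$ is straight precisely when vertex $\ell-1$ is a source or sink of $Q_\xi$; by construction of $T$ this is equivalent to $\gamma_{i,j}$ passing through the two triangles adjacent to diagonal $\ell-1$ in the MSW-straight configuration, and the edge and tile labels then match by inspection of rules (i) and (ii) in Section \ref{HL-modules and snake graphs}.

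With this identification in hand, the expansion formula of \cite{MSW11, MSW13} combined with Theorem \ref{FZ07thm3.7} yields
\begin{align*}
x[\alpha_{i,j}] = \frac{1}{\prod_{\ell = i}^j x_\ell} \cdot \frac{\sum_{P \in {\rm Match}(\mathcal{G})} x(P) y(P)}{F|_{\mathbb{P}}},
\end{align*}
where $F = \sum_{P} y(P)$ is the $F$-polynomial and the factor $\prod_{\ell = i}^j x_\ell$ in the denominator records the cluster variables assigned to the diagonals crossed by $\gamma_{i,j}$. Rewriting $F|_{\mathbb{P}} = \bigoplus_{P} y(P)$ via Lemma \ref{Lemma36} gives the first equation of the theorem. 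The second equation follows by applying the ring isomorphism $\iota$ of Theorem \ref{BCTheorem1} and then the $q$-character ring homomorphism $\chi_q$ to both sides; both maps are compatible with products and with the semifield evaluation in the denominator, so they commute with the stated factorization.

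The main obstacle will be the compatibility check in the second paragraph: identifying the triangulation $T$ that reproduces $Q_\xi$ on the nose (including the frozen part) and then matching the MSW snake graph with Definition \ref{HL and snake graph} requires a careful local case analysis at each vertex of $Q_\xi$ according to whether it is a source, a sink, or neither, while tracking how the height function $\xi$ controls both the zigzag pattern of $T$ and the edge labels near each tile. Once this identification is in place, the remainder of the proof is a direct application of existing results.
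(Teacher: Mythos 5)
Your proposal is correct and follows essentially the same route as the paper, whose proof is simply a citation of the Musiker--Schiffler--Williams expansion formula together with the separation-of-additions theorem of Fomin--Zelevinsky for the first identity, and of Brito--Chari's isomorphism $\iota$ for the second. You merely spell out the intermediate verification (realizing $Q_\xi$ from a polygon triangulation and matching Definition \ref{HL and snake graph} with the MSW snake graph) that the paper leaves implicit.
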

\begin{proof}
The proof of the first part follows \cite[Theorem 4.10]{MSW11} and \cite[Theorem 3.7]{FZ07} (see Theorem \ref{FZ07thm3.7}). The second one follows from \cite[Theorem 1, Corollary in Section 1.3]{BC19} (see Theorem \ref{BCTheorem1}).
\end{proof}

We explain Theorem \ref{snake graph formula} by the following example.
\begin{example}\label{exmaple39}
Continue our previous Examples \ref{example34}. Since $x_8,x_9$ are not mutated, our coefficients are in $\text{Trop}(x'_1,x'_2,x'_3,x'_4,x'_5,x'_6,x'_7,x'_8,x'_9,x_8,x_9)$. By Theorem \ref{snake graph formula}, we substitute variables by 
\begin{align*}
& y_1 = \frac{x'_1}{x'_2}, \quad y_2 = x'_2, \quad y_3 = \frac{x'_4}{x'_3}, \quad y_4 = \frac{x'_5}{x'_4}, \quad y_5 = \frac{1}{x'_5}, \quad  y_6 = \frac{x'_6}{x'_7}, \quad  y_7 = \frac{x'_7 x_8}{x'_8}.  
\end{align*}

All possible perfect matchings of the labeled snake graph associated to the HL-module $L(Y_{1,-3}Y_{3,-7}Y_{6,-2}Y_{8,-6})$ are shown in Table \ref{perfect matchings x17}. In Table \ref{perfect matchings x17} we list the monomial $x(P)y(P)$ in the below of $P\in\text{Match}(\mathcal{G})$.

\begin{table}
\center
\begin{tabular}{|c|c|c|c|}
\hline
\resizebox{.5\width}{.5\height}{
\begin{minipage}{0.4\linewidth}
\begin{tikzpicture}
\draw (0,0) rectangle (1,1);
\draw (1,0) rectangle (2,1);
\draw (2,0) rectangle(3,1);
\draw (2,1) rectangle(3,2);
\draw (3,1) rectangle(4,2);
\draw (4,1) rectangle(5,2);
\draw (4,2) rectangle(5,3);
\node at (0.5,0.5) {1};
\node at (1.5,0.5) {2};
\node at (2.5,0.5) {3};
\node at (2.5,1.5) {4};
\node at (3.5,1.5) {5};
\node at (4.5,1.5) {6};
\node at (4.5,2.5) {7};
\node[above] at (0.5,0.9) {2};
\node[above] at (1.5,0.9) {3};
\node[below] at (1.5,0.1) {1};
\node[below] at (2.5,0.1) {2};
\node[left] at (2.1,1.5) {3};
\node[right] at (2.9,0.5) {4};
\node[below] at (3.5,1.1) {4};
\node[above] at (2.5,1.9) {5};
\node[below] at (4.5,1.1) {5};
\node[above] at (3.5,1.9) {6};
\node[left] at (4.1,2.5) {6};
\node[right] at (4.9,1.5) {7};
\draw[red,ultra thick] (0,0) -- (0,1);
\draw[red,ultra thick] (1,0) -- (2,0);
\draw[red,ultra thick] (1,1) -- (2,1);
\draw[red,ultra thick] (3,0) -- (3,1);
\draw[red,ultra thick] (2,2) -- (3,2);
\draw[red,ultra thick] (4,1) -- (5,1);
\draw[red,ultra thick] (4,2) -- (4,3);
\draw[red,ultra thick] (5,2) -- (5,3);
\end{tikzpicture}
\caption*{$x_1x_3x_4 x^2_5 x_6$}
\end{minipage}} & 
\resizebox{.5\width}{.5\height}{
\begin{minipage}{0.4\linewidth}
\begin{tikzpicture}
\draw (0,0) rectangle (1,1);
\draw (1,0) rectangle (2,1);
\draw (2,0) rectangle(3,1);
\draw (2,1) rectangle(3,2);
\draw (3,1) rectangle(4,2);
\draw (4,1) rectangle(5,2);
\draw (4,2) rectangle(5,3);
\node at (0.5,0.5) {1};
\node at (1.5,0.5) {2};
\node at (2.5,0.5) {3};
\node at (2.5,1.5) {4};
\node at (3.5,1.5) {5};
\node at (4.5,1.5) {6};
\node at (4.5,2.5) {7};
\node[above] at (0.5,0.9) {2};
\node[above] at (1.5,0.9) {3};
\node[below] at (1.5,0.1) {1};
\node[below] at (2.5,0.1) {2};
\node[left] at (2.1,1.5) {3};
\node[right] at (2.9,0.5) {4};
\node[below] at (3.5,1.1) {4};
\node[above] at (2.5,1.9) {5};
\node[below] at (4.5,1.1) {5};
\node[above] at (3.5,1.9) {6};
\node[left] at (4.1,2.5) {6};
\node[right] at (4.9,1.5) {7};
\draw[red,ultra thick] (0,0) -- (0,1);
\draw[red,ultra thick] (1,0) -- (2,0);
\draw[red,ultra thick] (1,1) -- (2,1);
\draw[red,ultra thick] (3,0) -- (3,1);
\draw[red,ultra thick] (2,2) -- (3,2);
\draw[red,ultra thick] (4,1) -- (5,1);
\draw[red,ultra thick] (4,2) -- (5,2);
\draw[red,ultra thick] (4,3) -- (5,3);
\end{tikzpicture}
\caption*{$x_1x_3x_4 x^2_5 y_7$}
\end{minipage}} & 
\resizebox{.5\width}{.5\height}{
\begin{minipage}{0.4\linewidth}
\begin{tikzpicture}
\draw (0,0) rectangle (1,1);
\draw (1,0) rectangle (2,1);
\draw (2,0) rectangle(3,1);
\draw (2,1) rectangle(3,2);
\draw (3,1) rectangle(4,2);
\draw (4,1) rectangle(5,2);
\draw (4,2) rectangle(5,3);
\node at (0.5,0.5) {1};
\node at (1.5,0.5) {2};
\node at (2.5,0.5) {3};
\node at (2.5,1.5) {4};
\node at (3.5,1.5) {5};
\node at (4.5,1.5) {6};
\node at (4.5,2.5) {7};
\node[above] at (0.5,0.9) {2};
\node[above] at (1.5,0.9) {3};
\node[below] at (1.5,0.1) {1};
\node[below] at (2.5,0.1) {2};
\node[left] at (2.1,1.5) {3};
\node[right] at (2.9,0.5) {4};
\node[below] at (3.5,1.1) {4};
\node[above] at (2.5,1.9) {5};
\node[below] at (4.5,1.1) {5};
\node[above] at (3.5,1.9) {6};
\node[left] at (4.1,2.5) {6};
\node[right] at (4.9,1.5) {7};
\draw[red,ultra thick] (0,0) -- (0,1);
\draw[red,ultra thick] (1,0) -- (2,0);
\draw[red,ultra thick] (1,1) -- (2,1);
\draw[red,ultra thick] (3,0) -- (3,1);
\draw[red,ultra thick] (2,2) -- (3,2);
\draw[red,ultra thick] (4,1) -- (4,2);
\draw[red,ultra thick] (5,1) -- (5,2);
\draw[red,ultra thick] (4,3) -- (5,3);
\end{tikzpicture}
\caption*{$x_1x_3x_4 x_5 x_7 y_6y_7$}
\end{minipage}}  &
\resizebox{.5\width}{.5\height}{
\begin{minipage}{0.4\linewidth}
\begin{tikzpicture}
\draw (0,0) rectangle (1,1);
\draw (1,0) rectangle (2,1);
\draw (2,0) rectangle(3,1);
\draw (2,1) rectangle(3,2);
\draw (3,1) rectangle(4,2);
\draw (4,1) rectangle(5,2);
\draw (4,2) rectangle(5,3);
\node at (0.5,0.5) {1};
\node at (1.5,0.5) {2};
\node at (2.5,0.5) {3};
\node at (2.5,1.5) {4};
\node at (3.5,1.5) {5};
\node at (4.5,1.5) {6};
\node at (4.5,2.5) {7};
\node[above] at (0.5,0.9) {2};
\node[above] at (1.5,0.9) {3};
\node[below] at (1.5,0.1) {1};
\node[below] at (2.5,0.1) {2};
\node[left] at (2.1,1.5) {3};
\node[right] at (2.9,0.5) {4};
\node[below] at (3.5,1.1) {4};
\node[above] at (2.5,1.9) {5};
\node[below] at (4.5,1.1) {5};
\node[above] at (3.5,1.9) {6};
\node[left] at (4.1,2.5) {6};
\node[right] at (4.9,1.5) {7};
\draw[red,ultra thick] (0,0) -- (0,1);
\draw[red,ultra thick] (1,0) -- (1,1);
\draw[red,ultra thick] (2,0) -- (2,1);
\draw[red,ultra thick] (3,0) -- (3,1);
\draw[red,ultra thick] (2,2) -- (3,2);
\draw[red,ultra thick] (4,1) -- (5,1);
\draw[red,ultra thick] (4,2) -- (4,3);
\draw[red,ultra thick] (5,2) -- (5,3);
\end{tikzpicture}
\caption*{$x_4 x^2_5 x_6 y_2$}
\end{minipage}}    \\
\hline
\resizebox{.5\width}{.5\height}{
\begin{minipage}{0.4\linewidth}
\begin{tikzpicture}
\draw (0,0) rectangle (1,1);
\draw (1,0) rectangle (2,1);
\draw (2,0) rectangle(3,1);
\draw (2,1) rectangle(3,2);
\draw (3,1) rectangle(4,2);
\draw (4,1) rectangle(5,2);
\draw (4,2) rectangle(5,3);
\node at (0.5,0.5) {1};
\node at (1.5,0.5) {2};
\node at (2.5,0.5) {3};
\node at (2.5,1.5) {4};
\node at (3.5,1.5) {5};
\node at (4.5,1.5) {6};
\node at (4.5,2.5) {7};
\node[above] at (0.5,0.9) {2};
\node[above] at (1.5,0.9) {3};
\node[below] at (1.5,0.1) {1};
\node[below] at (2.5,0.1) {2};
\node[left] at (2.1,1.5) {3};
\node[right] at (2.9,0.5) {4};
\node[below] at (3.5,1.1) {4};
\node[above] at (2.5,1.9) {5};
\node[below] at (4.5,1.1) {5};
\node[above] at (3.5,1.9) {6};
\node[left] at (4.1,2.5) {6};
\node[right] at (4.9,1.5) {7};
\draw[red,ultra thick] (0,0) -- (0,1);
\draw[red,ultra thick] (1,0) -- (1,1);
\draw[red,ultra thick] (2,0) -- (2,1);
\draw[red,ultra thick] (3,0) -- (3,1);
\draw[red,ultra thick] (2,2) -- (3,2);
\draw[red,ultra thick] (4,1) -- (5,1);
\draw[red,ultra thick] (4,2) -- (5,2);
\draw[red,ultra thick] (4,3) -- (5,3);
\end{tikzpicture}
\caption*{$x_4 x^2_5 y_2y_7$} 
\end{minipage}} & 
\resizebox{.5\width}{.5\height}{
\begin{minipage}{0.4\linewidth}
\begin{tikzpicture}
\draw (0,0) rectangle (1,1);
\draw (1,0) rectangle (2,1);
\draw (2,0) rectangle(3,1);
\draw (2,1) rectangle(3,2);
\draw (3,1) rectangle(4,2);
\draw (4,1) rectangle(5,2);
\draw (4,2) rectangle(5,3);
\node at (0.5,0.5) {1};
\node at (1.5,0.5) {2};
\node at (2.5,0.5) {3};
\node at (2.5,1.5) {4};
\node at (3.5,1.5) {5};
\node at (4.5,1.5) {6};
\node at (4.5,2.5) {7};
\node[above] at (0.5,0.9) {2};
\node[above] at (1.5,0.9) {3};
\node[below] at (1.5,0.1) {1};
\node[below] at (2.5,0.1) {2};
\node[left] at (2.1,1.5) {3};
\node[right] at (2.9,0.5) {4};
\node[below] at (3.5,1.1) {4};
\node[above] at (2.5,1.9) {5};
\node[below] at (4.5,1.1) {5};
\node[above] at (3.5,1.9) {6};
\node[left] at (4.1,2.5) {6};
\node[right] at (4.9,1.5) {7};
\draw[red,ultra thick] (0,0) -- (0,1);
\draw[red,ultra thick] (1,0) -- (1,1);
\draw[red,ultra thick] (2,0) -- (2,1);
\draw[red,ultra thick] (3,0) -- (3,1);
\draw[red,ultra thick] (2,2) -- (3,2);
\draw[red,ultra thick] (4,1) -- (4,2);
\draw[red,ultra thick] (5,1) -- (5,2);
\draw[red,ultra thick] (4,3) -- (5,3);
\end{tikzpicture}
\caption*{$x_4 x_5 x_7 y_2y_6y_7$} 
\end{minipage}} &
\resizebox{.5\width}{.5\height}{ 
\begin{minipage}{0.4\linewidth}
\begin{tikzpicture}
\draw (0,0) rectangle (1,1);
\draw (1,0) rectangle (2,1);
\draw (2,0) rectangle(3,1);
\draw (2,1) rectangle(3,2);
\draw (3,1) rectangle(4,2);
\draw (4,1) rectangle(5,2);
\draw (4,2) rectangle(5,3);
\node at (0.5,0.5) {1};
\node at (1.5,0.5) {2};
\node at (2.5,0.5) {3};
\node at (2.5,1.5) {4};
\node at (3.5,1.5) {5};
\node at (4.5,1.5) {6};
\node at (4.5,2.5) {7};
\node[above] at (0.5,0.9) {2};
\node[above] at (1.5,0.9) {3};
\node[below] at (1.5,0.1) {1};
\node[below] at (2.5,0.1) {2};
\node[left] at (2.1,1.5) {3};
\node[right] at (2.9,0.5) {4};
\node[below] at (3.5,1.1) {4};
\node[above] at (2.5,1.9) {5};
\node[below] at (4.5,1.1) {5};
\node[above] at (3.5,1.9) {6};
\node[left] at (4.1,2.5) {6};
\node[right] at (4.9,1.5) {7};
\draw[red,ultra thick] (0,0) -- (1,0);
\draw[red,ultra thick] (0,1) -- (1,1);
\draw[red,ultra thick] (2,0) -- (2,1);
\draw[red,ultra thick] (3,0) -- (3,1);
\draw[red,ultra thick] (2,2) -- (3,2);
\draw[red,ultra thick] (4,1) -- (5,1);
\draw[red,ultra thick] (4,2) -- (4,3);
\draw[red,ultra thick] (5,2) -- (5,3);
\end{tikzpicture}
\caption*{$x_2 x_4 x^2_5 x_6 y_1y_2$} 
\end{minipage}} & 
\resizebox{.5\width}{.5\height}{
\begin{minipage}{0.4\linewidth}
\begin{tikzpicture}
\draw (0,0) rectangle (1,1);
\draw (1,0) rectangle (2,1);
\draw (2,0) rectangle(3,1);
\draw (2,1) rectangle(3,2);
\draw (3,1) rectangle(4,2);
\draw (4,1) rectangle(5,2);
\draw (4,2) rectangle(5,3);
\node at (0.5,0.5) {1};
\node at (1.5,0.5) {2};
\node at (2.5,0.5) {3};
\node at (2.5,1.5) {4};
\node at (3.5,1.5) {5};
\node at (4.5,1.5) {6};
\node at (4.5,2.5) {7};
\node[above] at (0.5,0.9) {2};
\node[above] at (1.5,0.9) {3};
\node[below] at (1.5,0.1) {1};
\node[below] at (2.5,0.1) {2};
\node[left] at (2.1,1.5) {3};
\node[right] at (2.9,0.5) {4};
\node[below] at (3.5,1.1) {4};
\node[above] at (2.5,1.9) {5};
\node[below] at (4.5,1.1) {5};
\node[above] at (3.5,1.9) {6};
\node[left] at (4.1,2.5) {6};
\node[right] at (4.9,1.5) {7};
\draw[red,ultra thick] (0,0) -- (0,1);
\draw[red,ultra thick] (1,0) -- (1,1);
\draw[red,ultra thick] (2,0) -- (3,0);
\draw[red,ultra thick] (2,1) -- (3,1);
\draw[red,ultra thick] (2,2) -- (3,2);
\draw[red,ultra thick] (4,1) -- (5,1);
\draw[red,ultra thick] (4,2) -- (4,3);
\draw[red,ultra thick] (5,2) -- (5,3);
\end{tikzpicture}
\caption*{$x_2 x^2_5 x_6 y_2y_3$}
\end{minipage}}\\
\hline
\resizebox{.5\width}{.5\height}{
\begin{minipage}{0.4\linewidth}
\begin{tikzpicture}
\draw (0,0) rectangle (1,1);
\draw (1,0) rectangle (2,1);
\draw (2,0) rectangle(3,1);
\draw (2,1) rectangle(3,2);
\draw (3,1) rectangle(4,2);
\draw (4,1) rectangle(5,2);
\draw (4,2) rectangle(5,3);
\node at (0.5,0.5) {1};
\node at (1.5,0.5) {2};
\node at (2.5,0.5) {3};
\node at (2.5,1.5) {4};
\node at (3.5,1.5) {5};
\node at (4.5,1.5) {6};
\node at (4.5,2.5) {7};
\node[above] at (0.5,0.9) {2};
\node[above] at (1.5,0.9) {3};
\node[below] at (1.5,0.1) {1};
\node[below] at (2.5,0.1) {2};
\node[left] at (2.1,1.5) {3};
\node[right] at (2.9,0.5) {4};
\node[below] at (3.5,1.1) {4};
\node[above] at (2.5,1.9) {5};
\node[below] at (4.5,1.1) {5};
\node[above] at (3.5,1.9) {6};
\node[left] at (4.1,2.5) {6};
\node[right] at (4.9,1.5) {7};
\draw[red,ultra thick] (0,0) -- (1,0);
\draw[red,ultra thick] (0,1) -- (1,1);
\draw[red,ultra thick] (2,0) -- (3,0);
\draw[red,ultra thick] (2,1) -- (3,1);
\draw[red,ultra thick] (2,2) -- (3,2);
\draw[red,ultra thick] (4,1) -- (5,1);
\draw[red,ultra thick] (4,2) -- (4,3);
\draw[red,ultra thick] (5,2) -- (5,3);
\end{tikzpicture}
\caption*{$x^2_2 x^2_5 x_6 y_1y_2y_3$}
\end{minipage}} & 
\resizebox{.5\width}{.5\height}{
\begin{minipage}{0.4\linewidth}
\begin{tikzpicture}
\draw (0,0) rectangle (1,1);
\draw (1,0) rectangle (2,1);
\draw (2,0) rectangle(3,1);
\draw (2,1) rectangle(3,2);
\draw (3,1) rectangle(4,2);
\draw (4,1) rectangle(5,2);
\draw (4,2) rectangle(5,3);
\node at (0.5,0.5) {1};
\node at (1.5,0.5) {2};
\node at (2.5,0.5) {3};
\node at (2.5,1.5) {4};
\node at (3.5,1.5) {5};
\node at (4.5,1.5) {6};
\node at (4.5,2.5) {7};
\node[above] at (0.5,0.9) {2};
\node[above] at (1.5,0.9) {3};
\node[below] at (1.5,0.1) {1};
\node[below] at (2.5,0.1) {2};
\node[left] at (2.1,1.5) {3};
\node[right] at (2.9,0.5) {4};
\node[below] at (3.5,1.1) {4};
\node[above] at (2.5,1.9) {5};
\node[below] at (4.5,1.1) {5};
\node[above] at (3.5,1.9) {6};
\node[left] at (4.1,2.5) {6};
\node[right] at (4.9,1.5) {7};
\draw[red,ultra thick] (0,0) -- (1,0);
\draw[red,ultra thick] (0,1) -- (1,1);
\draw[red,ultra thick] (2,0) -- (2,1);
\draw[red,ultra thick] (3,0) -- (3,1);
\draw[red,ultra thick] (2,2) -- (3,2);
\draw[red,ultra thick] (4,1) -- (5,1);
\draw[red,ultra thick] (4,2) -- (5,2);
\draw[red,ultra thick] (4,3) -- (5,3);
\end{tikzpicture}
\caption*{$x_2 x_4 x^2_5 y_1y_2y_7$} 
\end{minipage}} & 
\resizebox{.5\width}{.5\height}{
\begin{minipage}{0.4\linewidth}
\begin{tikzpicture}
\draw (0,0) rectangle (1,1);
\draw (1,0) rectangle (2,1);
\draw (2,0) rectangle(3,1);
\draw (2,1) rectangle(3,2);
\draw (3,1) rectangle(4,2);
\draw (4,1) rectangle(5,2);
\draw (4,2) rectangle(5,3);
\node at (0.5,0.5) {1};
\node at (1.5,0.5) {2};
\node at (2.5,0.5) {3};
\node at (2.5,1.5) {4};
\node at (3.5,1.5) {5};
\node at (4.5,1.5) {6};
\node at (4.5,2.5) {7};
\node[above] at (0.5,0.9) {2};
\node[above] at (1.5,0.9) {3};
\node[below] at (1.5,0.1) {1};
\node[below] at (2.5,0.1) {2};
\node[left] at (2.1,1.5) {3};
\node[right] at (2.9,0.5) {4};
\node[below] at (3.5,1.1) {4};
\node[above] at (2.5,1.9) {5};
\node[below] at (4.5,1.1) {5};
\node[above] at (3.5,1.9) {6};
\node[left] at (4.1,2.5) {6};
\node[right] at (4.9,1.5) {7};
\draw[red,ultra thick] (0,0) -- (0,1);
\draw[red,ultra thick] (1,0) -- (1,1);
\draw[red,ultra thick] (2,0) -- (3,0);
\draw[red,ultra thick] (2,1) -- (3,1);
\draw[red,ultra thick] (2,2) -- (3,2);
\draw[red,ultra thick] (4,1) -- (5,1);
\draw[red,ultra thick] (4,2) -- (5,2);
\draw[red,ultra thick] (4,3) -- (5,3);
\end{tikzpicture}
\caption*{$x_2 x^2_5 y_2y_3y_7$}
\end{minipage}} & 
\resizebox{.5\width}{.5\height}{
\begin{minipage}{0.4\linewidth}
\begin{tikzpicture}
\draw (0,0) rectangle (1,1);
\draw (1,0) rectangle (2,1);
\draw (2,0) rectangle(3,1);
\draw (2,1) rectangle(3,2);
\draw (3,1) rectangle(4,2);
\draw (4,1) rectangle(5,2);
\draw (4,2) rectangle(5,3);
\node at (0.5,0.5) {1};
\node at (1.5,0.5) {2};
\node at (2.5,0.5) {3};
\node at (2.5,1.5) {4};
\node at (3.5,1.5) {5};
\node at (4.5,1.5) {6};
\node at (4.5,2.5) {7};
\node[above] at (0.5,0.9) {2};
\node[above] at (1.5,0.9) {3};
\node[below] at (1.5,0.1) {1};
\node[below] at (2.5,0.1) {2};
\node[left] at (2.1,1.5) {3};
\node[right] at (2.9,0.5) {4};
\node[below] at (3.5,1.1) {4};
\node[above] at (2.5,1.9) {5};
\node[below] at (4.5,1.1) {5};
\node[above] at (3.5,1.9) {6};
\node[left] at (4.1,2.5) {6};
\node[right] at (4.9,1.5) {7};
\draw[red,ultra thick] (0,0) -- (1,0);
\draw[red,ultra thick] (0,1) -- (1,1);
\draw[red,ultra thick] (2,0) -- (3,0);
\draw[red,ultra thick] (2,1) -- (3,1);
\draw[red,ultra thick] (2,2) -- (3,2);
\draw[red,ultra thick] (4,1) -- (5,1);
\draw[red,ultra thick] (4,2) -- (5,2);
\draw[red,ultra thick] (4,3) -- (5,3);
\end{tikzpicture}
\caption*{$x^2_2 x^2_5 y_1y_2y_3y_7$}
\end{minipage}} \\
\hline
\resizebox{.5\width}{.5\height}{
\begin{minipage}{0.4\linewidth}
\begin{tikzpicture}
\draw (0,0) rectangle (1,1);
\draw (1,0) rectangle (2,1);
\draw (2,0) rectangle(3,1);
\draw (2,1) rectangle(3,2);
\draw (3,1) rectangle(4,2);
\draw (4,1) rectangle(5,2);
\draw (4,2) rectangle(5,3);
\node at (0.5,0.5) {1};
\node at (1.5,0.5) {2};
\node at (2.5,0.5) {3};
\node at (2.5,1.5) {4};
\node at (3.5,1.5) {5};
\node at (4.5,1.5) {6};
\node at (4.5,2.5) {7};
\node[above] at (0.5,0.9) {2};
\node[above] at (1.5,0.9) {3};
\node[below] at (1.5,0.1) {1};
\node[below] at (2.5,0.1) {2};
\node[left] at (2.1,1.5) {3};
\node[right] at (2.9,0.5) {4};
\node[below] at (3.5,1.1) {4};
\node[above] at (2.5,1.9) {5};
\node[below] at (4.5,1.1) {5};
\node[above] at (3.5,1.9) {6};
\node[left] at (4.1,2.5) {6};
\node[right] at (4.9,1.5) {7};
\draw[red,ultra thick] (0,0) -- (1,0);
\draw[red,ultra thick] (0,1) -- (1,1);
\draw[red,ultra thick] (2,0) -- (2,1);
\draw[red,ultra thick] (3,0) -- (3,1);
\draw[red,ultra thick] (2,2) -- (3,2);
\draw[red,ultra thick] (4,1) -- (4,2);
\draw[red,ultra thick] (5,1) -- (5,2);
\draw[red,ultra thick] (4,3) -- (5,3);
\end{tikzpicture}
\caption*{$x_2 x_4 x_5 x_7 y_1y_2y_6y_7$} 
\end{minipage}} & 
\resizebox{.5\width}{.5\height}{
\begin{minipage}{0.4\linewidth}
\begin{tikzpicture}
\draw (0,0) rectangle (1,1);
\draw (1,0) rectangle (2,1);
\draw (2,0) rectangle(3,1);
\draw (2,1) rectangle(3,2);
\draw (3,1) rectangle(4,2);
\draw (4,1) rectangle(5,2);
\draw (4,2) rectangle(5,3);
\node at (0.5,0.5) {1};
\node at (1.5,0.5) {2};
\node at (2.5,0.5) {3};
\node at (2.5,1.5) {4};
\node at (3.5,1.5) {5};
\node at (4.5,1.5) {6};
\node at (4.5,2.5) {7};
\node[above] at (0.5,0.9) {2};
\node[above] at (1.5,0.9) {3};
\node[below] at (1.5,0.1) {1};
\node[below] at (2.5,0.1) {2};
\node[left] at (2.1,1.5) {3};
\node[right] at (2.9,0.5) {4};
\node[below] at (3.5,1.1) {4};
\node[above] at (2.5,1.9) {5};
\node[below] at (4.5,1.1) {5};
\node[above] at (3.5,1.9) {6};
\node[left] at (4.1,2.5) {6};
\node[right] at (4.9,1.5) {7};
\draw[red,ultra thick] (0,0) -- (0,1);
\draw[red,ultra thick] (1,0) -- (1,1);
\draw[red,ultra thick] (2,0) -- (3,0);
\draw[red,ultra thick] (2,1) -- (3,1);
\draw[red,ultra thick] (2,2) -- (3,2);
\draw[red,ultra thick] (4,1) -- (4,2);
\draw[red,ultra thick] (5,1) -- (5,2);
\draw[red,ultra thick] (4,3) -- (5,3);
\end{tikzpicture}
\caption*{$x_2 x_5 x_7 y_2y_3y_6y_7$} 
\end{minipage}} & 
\resizebox{.5\width}{.5\height}{
\begin{minipage}{0.4\linewidth}
\begin{tikzpicture}
\draw (0,0) rectangle (1,1);
\draw (1,0) rectangle (2,1);
\draw (2,0) rectangle(3,1);
\draw (2,1) rectangle(3,2);
\draw (3,1) rectangle(4,2);
\draw (4,1) rectangle(5,2);
\draw (4,2) rectangle(5,3);
\node at (0.5,0.5) {1};
\node at (1.5,0.5) {2};
\node at (2.5,0.5) {3};
\node at (2.5,1.5) {4};
\node at (3.5,1.5) {5};
\node at (4.5,1.5) {6};
\node at (4.5,2.5) {7};
\node[above] at (0.5,0.9) {2};
\node[above] at (1.5,0.9) {3};
\node[below] at (1.5,0.1) {1};
\node[below] at (2.5,0.1) {2};
\node[left] at (2.1,1.5) {3};
\node[right] at (2.9,0.5) {4};
\node[below] at (3.5,1.1) {4};
\node[above] at (2.5,1.9) {5};
\node[below] at (4.5,1.1) {5};
\node[above] at (3.5,1.9) {6};
\node[left] at (4.1,2.5) {6};
\node[right] at (4.9,1.5) {7};
\draw[red,ultra thick] (0,0) -- (1,0);
\draw[red,ultra thick] (0,1) -- (1,1);
\draw[red,ultra thick] (2,0) -- (3,0);
\draw[red,ultra thick] (2,1) -- (3,1);
\draw[red,ultra thick] (2,2) -- (3,2);
\draw[red,ultra thick] (4,1) -- (4,2);
\draw[red,ultra thick] (5,1) -- (5,2);
\draw[red,ultra thick] (4,3) -- (5,3);
\end{tikzpicture}
\caption*{$x^2_2 x_5 x_7 y_1y_2y_3y_6y_7$} 
\end{minipage}} & 
\resizebox{.5\width}{.5\height}{
\begin{minipage}{0.4\linewidth}
\begin{tikzpicture}
\draw (0,0) rectangle (1,1);
\draw (1,0) rectangle (2,1);
\draw (2,0) rectangle(3,1);
\draw (2,1) rectangle(3,2);
\draw (3,1) rectangle(4,2);
\draw (4,1) rectangle(5,2);
\draw (4,2) rectangle(5,3);
\node at (0.5,0.5) {1};
\node at (1.5,0.5) {2};
\node at (2.5,0.5) {3};
\node at (2.5,1.5) {4};
\node at (3.5,1.5) {5};
\node at (4.5,1.5) {6};
\node at (4.5,2.5) {7};
\node[above] at (0.5,0.9) {2};
\node[above] at (1.5,0.9) {3};
\node[below] at (1.5,0.1) {1};
\node[below] at (2.5,0.1) {2};
\node[left] at (2.1,1.5) {3};
\node[right] at (2.9,0.5) {4};
\node[below] at (3.5,1.1) {4};
\node[above] at (2.5,1.9) {5};
\node[below] at (4.5,1.1) {5};
\node[above] at (3.5,1.9) {6};
\node[left] at (4.1,2.5) {6};
\node[right] at (4.9,1.5) {7};
\draw[red,ultra thick] (0,0) -- (1,0);
\draw[red,ultra thick] (0,1) -- (1,1);
\draw[red,ultra thick] (2,0) -- (3,0);
\draw[red,ultra thick] (2,1) -- (2,2);
\draw[red,ultra thick] (3,1) -- (3,2);
\draw[red,ultra thick] (4,1) -- (5,1);
\draw[red,ultra thick] (4,2) -- (4,3);
\draw[red,ultra thick] (5,2) -- (5,3);
\end{tikzpicture}
\caption*{$x^2_2 x_3 x_5 x_6 y_1y_2y_3y_4$}
\end{minipage}} \\
\hline
\resizebox{.5\width}{.5\height}{
\begin{minipage}{0.4\linewidth}
\begin{tikzpicture}
\draw (0,0) rectangle (1,1);
\draw (1,0) rectangle (2,1);
\draw (2,0) rectangle(3,1);
\draw (2,1) rectangle(3,2);
\draw (3,1) rectangle(4,2);
\draw (4,1) rectangle(5,2);
\draw (4,2) rectangle(5,3);
\node at (0.5,0.5) {1};
\node at (1.5,0.5) {2};
\node at (2.5,0.5) {3};
\node at (2.5,1.5) {4};
\node at (3.5,1.5) {5};
\node at (4.5,1.5) {6};
\node at (4.5,2.5) {7};
\node[above] at (0.5,0.9) {2};
\node[above] at (1.5,0.9) {3};
\node[below] at (1.5,0.1) {1};
\node[below] at (2.5,0.1) {2};
\node[left] at (2.1,1.5) {3};
\node[right] at (2.9,0.5) {4};
\node[below] at (3.5,1.1) {4};
\node[above] at (2.5,1.9) {5};
\node[below] at (4.5,1.1) {5};
\node[above] at (3.5,1.9) {6};
\node[left] at (4.1,2.5) {6};
\node[right] at (4.9,1.5) {7};
\draw[red,ultra thick] (0,0) -- (1,0);
\draw[red,ultra thick] (0,1) -- (1,1);
\draw[red,ultra thick] (2,0) -- (3,0);
\draw[red,ultra thick] (2,1) -- (2,2);
\draw[red,ultra thick] (3,1) -- (3,2);
\draw[red,ultra thick] (4,1) -- (5,1);
\draw[red,ultra thick] (4,2) -- (5,2);
\draw[red,ultra thick] (4,3) -- (5,3);
\end{tikzpicture}
\caption*{$x^2_2 x_3 x_5 y_1y_2y_3y_4y_7$} 
\end{minipage}} &
\resizebox{.5\width}{.5\height}{
\begin{minipage}{0.4\linewidth}
\begin{tikzpicture}
\draw (0,0) rectangle (1,1);
\draw (1,0) rectangle (2,1);
\draw (2,0) rectangle(3,1);
\draw (2,1) rectangle(3,2);
\draw (3,1) rectangle(4,2);
\draw (4,1) rectangle(5,2);
\draw (4,2) rectangle(5,3);
\node at (0.5,0.5) {1};
\node at (1.5,0.5) {2};
\node at (2.5,0.5) {3};
\node at (2.5,1.5) {4};
\node at (3.5,1.5) {5};
\node at (4.5,1.5) {6};
\node at (4.5,2.5) {7};
\node[above] at (0.5,0.9) {2};
\node[above] at (1.5,0.9) {3};
\node[below] at (1.5,0.1) {1};
\node[below] at (2.5,0.1) {2};
\node[left] at (2.1,1.5) {3};
\node[right] at (2.9,0.5) {4};
\node[below] at (3.5,1.1) {4};
\node[above] at (2.5,1.9) {5};
\node[below] at (4.5,1.1) {5};
\node[above] at (3.5,1.9) {6};
\node[left] at (4.1,2.5) {6};
\node[right] at (4.9,1.5) {7};
\draw[red,ultra thick] (0,0) -- (1,0);
\draw[red,ultra thick] (0,1) -- (1,1);
\draw[red,ultra thick] (2,0) -- (3,0);
\draw[red,ultra thick] (2,1) -- (2,2);
\draw[red,ultra thick] (3,1) -- (3,2);
\draw[red,ultra thick] (4,1) -- (4,2);
\draw[red,ultra thick] (5,1) -- (5,2);
\draw[red,ultra thick] (4,3) -- (5,3);
\end{tikzpicture}
\caption*{$x^2_2 x_3 x_7 y_1y_2y_3y_4y_6y_7$} 
\end{minipage}} & 
\resizebox{.5\width}{.5\height}{
\begin{minipage}{0.4\linewidth}
\begin{tikzpicture}
\draw (0,0) rectangle (1,1);
\draw (1,0) rectangle (2,1);
\draw (2,0) rectangle(3,1);
\draw (2,1) rectangle(3,2);
\draw (3,1) rectangle(4,2);
\draw (4,1) rectangle(5,2);
\draw (4,2) rectangle(5,3);
\node at (0.5,0.5) {1};
\node at (1.5,0.5) {2};
\node at (2.5,0.5) {3};
\node at (2.5,1.5) {4};
\node at (3.5,1.5) {5};
\node at (4.5,1.5) {6};
\node at (4.5,2.5) {7};
\node[above] at (0.5,0.9) {2};
\node[above] at (1.5,0.9) {3};
\node[below] at (1.5,0.1) {1};
\node[below] at (2.5,0.1) {2};
\node[left] at (2.1,1.5) {3};
\node[right] at (2.9,0.5) {4};
\node[below] at (3.5,1.1) {4};
\node[above] at (2.5,1.9) {5};
\node[below] at (4.5,1.1) {5};
\node[above] at (3.5,1.9) {6};
\node[left] at (4.1,2.5) {6};
\node[right] at (4.9,1.5) {7};
\draw[red,ultra thick] (0,0) -- (0,1);
\draw[red,ultra thick] (1,0) -- (1,1);
\draw[red,ultra thick] (2,0) -- (3,0);
\draw[red,ultra thick] (2,1) -- (2,2);
\draw[red,ultra thick] (3,1) -- (3,2);
\draw[red,ultra thick] (4,1) -- (5,1);
\draw[red,ultra thick] (4,2) -- (4,3);
\draw[red,ultra thick] (5,2) -- (5,3);
\end{tikzpicture}
\caption*{$x_2 x_3 x_5 x_6 y_2y_3y_4$} 
\end{minipage}} & 
\resizebox{.5\width}{.5\height}{
\begin{minipage}{0.4\linewidth}
\begin{tikzpicture}
\draw (0,0) rectangle (1,1);
\draw (1,0) rectangle (2,1);
\draw (2,0) rectangle(3,1);
\draw (2,1) rectangle(3,2);
\draw (3,1) rectangle(4,2);
\draw (4,1) rectangle(5,2);
\draw (4,2) rectangle(5,3);
\node at (0.5,0.5) {1};
\node at (1.5,0.5) {2};
\node at (2.5,0.5) {3};
\node at (2.5,1.5) {4};
\node at (3.5,1.5) {5};
\node at (4.5,1.5) {6};
\node at (4.5,2.5) {7};
\node[above] at (0.5,0.9) {2};
\node[above] at (1.5,0.9) {3};
\node[below] at (1.5,0.1) {1};
\node[below] at (2.5,0.1) {2};
\node[left] at (2.1,1.5) {3};
\node[right] at (2.9,0.5) {4};
\node[below] at (3.5,1.1) {4};
\node[above] at (2.5,1.9) {5};
\node[below] at (4.5,1.1) {5};
\node[above] at (3.5,1.9) {6};
\node[left] at (4.1,2.5) {6};
\node[right] at (4.9,1.5) {7};
\draw[red,ultra thick] (0,0) -- (0,1);
\draw[red,ultra thick] (1,0) -- (1,1);
\draw[red,ultra thick] (2,0) -- (3,0);
\draw[red,ultra thick] (2,1) -- (2,2);
\draw[red,ultra thick] (3,1) -- (3,2);
\draw[red,ultra thick] (4,1) -- (5,1);
\draw[red,ultra thick] (4,2) -- (5,2);
\draw[red,ultra thick] (4,3) -- (5,3);
\end{tikzpicture}
\caption*{$x_2 x_3 x_5 y_2y_3y_4y_7$}
\end{minipage}} \\
\hline
\resizebox{.5\width}{.5\height}{
\begin{minipage}{0.4\linewidth}
\begin{tikzpicture}
\draw (0,0) rectangle (1,1);
\draw (1,0) rectangle (2,1);
\draw (2,0) rectangle(3,1);
\draw (2,1) rectangle(3,2);
\draw (3,1) rectangle(4,2);
\draw (4,1) rectangle(5,2);
\draw (4,2) rectangle(5,3);
\node at (0.5,0.5) {1};
\node at (1.5,0.5) {2};
\node at (2.5,0.5) {3};
\node at (2.5,1.5) {4};
\node at (3.5,1.5) {5};
\node at (4.5,1.5) {6};
\node at (4.5,2.5) {7};
\node[above] at (0.5,0.9) {2};
\node[above] at (1.5,0.9) {3};
\node[below] at (1.5,0.1) {1};
\node[below] at (2.5,0.1) {2};
\node[left] at (2.1,1.5) {3};
\node[right] at (2.9,0.5) {4};
\node[below] at (3.5,1.1) {4};
\node[above] at (2.5,1.9) {5};
\node[below] at (4.5,1.1) {5};
\node[above] at (3.5,1.9) {6};
\node[left] at (4.1,2.5) {6};
\node[right] at (4.9,1.5) {7};
\draw[red,ultra thick] (0,0) -- (0,1);
\draw[red,ultra thick] (1,0) -- (1,1);
\draw[red,ultra thick] (2,0) -- (3,0);
\draw[red,ultra thick] (2,1) -- (2,2);
\draw[red,ultra thick] (3,1) -- (3,2);
\draw[red,ultra thick] (4,1) -- (4,2);
\draw[red,ultra thick] (5,1) -- (5,2);
\draw[red,ultra thick] (4,3) -- (5,3);
\end{tikzpicture}
\caption*{$x_2 x_3 x_7 y_2y_3y_4y_6y_7$} 
\end{minipage}} & 
\resizebox{.5\width}{.5\height}{
\begin{minipage}{0.4\linewidth}
\begin{tikzpicture}
\draw (0,0) rectangle (1,1);
\draw (1,0) rectangle (2,1);
\draw (2,0) rectangle(3,1);
\draw (2,1) rectangle(3,2);
\draw (3,1) rectangle(4,2);
\draw (4,1) rectangle(5,2);
\draw (4,2) rectangle(5,3);
\node at (0.5,0.5) {1};
\node at (1.5,0.5) {2};
\node at (2.5,0.5) {3};
\node at (2.5,1.5) {4};
\node at (3.5,1.5) {5};
\node at (4.5,1.5) {6};
\node at (4.5,2.5) {7};
\node[above] at (0.5,0.9) {2};
\node[above] at (1.5,0.9) {3};
\node[below] at (1.5,0.1) {1};
\node[below] at (2.5,0.1) {2};
\node[left] at (2.1,1.5) {3};
\node[right] at (2.9,0.5) {4};
\node[below] at (3.5,1.1) {4};
\node[above] at (2.5,1.9) {5};
\node[below] at (4.5,1.1) {5};
\node[above] at (3.5,1.9) {6};
\node[left] at (4.1,2.5) {6};
\node[right] at (4.9,1.5) {7};
\draw[red,ultra thick] (0,0) -- (0,1);
\draw[red,ultra thick] (1,0) -- (1,1);
\draw[red,ultra thick] (2,0) -- (3,0);
\draw[red,ultra thick] (2,1) -- (2,2);
\draw[red,ultra thick] (3,1) -- (4,1);
\draw[red,ultra thick] (3,2) -- (4,2);
\draw[red,ultra thick] (5,1) -- (5,2);
\draw[red,ultra thick] (4,3) -- (5,3);
\end{tikzpicture}
\caption*{$x_2 x_3 x_4 x_6 x_7 y_2y_3y_4y_5y_6y_7$}
\end{minipage}} & 
\resizebox{.5\width}{.5\height}{
\begin{minipage}{0.4\linewidth}
\begin{tikzpicture}
\draw (0,0) rectangle (1,1);
\draw (1,0) rectangle (2,1);
\draw (2,0) rectangle(3,1);
\draw (2,1) rectangle(3,2);
\draw (3,1) rectangle(4,2);
\draw (4,1) rectangle(5,2);
\draw (4,2) rectangle(5,3);
\node at (0.5,0.5) {1};
\node at (1.5,0.5) {2};
\node at (2.5,0.5) {3};
\node at (2.5,1.5) {4};
\node at (3.5,1.5) {5};
\node at (4.5,1.5) {6};
\node at (4.5,2.5) {7};
\node[above] at (0.5,0.9) {2};
\node[above] at (1.5,0.9) {3};
\node[below] at (1.5,0.1) {1};
\node[below] at (2.5,0.1) {2};
\node[left] at (2.1,1.5) {3};
\node[right] at (2.9,0.5) {4};
\node[below] at (3.5,1.1) {4};
\node[above] at (2.5,1.9) {5};
\node[below] at (4.5,1.1) {5};
\node[above] at (3.5,1.9) {6};
\node[left] at (4.1,2.5) {6};
\node[right] at (4.9,1.5) {7};
\draw[red,ultra thick] (0,0) -- (1,0);
\draw[red,ultra thick] (0,1) -- (1,1);
\draw[red,ultra thick] (2,0) -- (3,0);
\draw[red,ultra thick] (2,1) -- (2,2);
\draw[red,ultra thick] (3,1) -- (4,1);
\draw[red,ultra thick] (3,2) -- (4,2);
\draw[red,ultra thick] (5,1) -- (5,2);
\draw[red,ultra thick] (4,3) -- (5,3);
\end{tikzpicture}
\caption*{$x^2_2 x_3 x_4 x_6 x_7 y_1y_2y_3y_4y_5y_6y_7$}
\end{minipage}} \\
\cline{1-3}
\end{tabular}
\caption{Perfect matchings of the labeled snake graph associated to $x[\alpha_{1,7}]$.} \label{perfect matchings x17}
\end{table}

By Theorem \ref{dual case of F-polynomial}, the associated $F$-polynomial is given by 
\[
F(y_1,y_2,y_3,y_4,y_5,y_6,y_7)=F(\mathcal{G}[2,3,3]) = C^{-1}_3\mathcal{N}[\mathcal{L}_1,\mathcal{L}_2,\mathcal{L}_3],
\] 
where
\begin{align*}
\mathcal{L}_1=1+y_1, \quad \mathcal{L}_2= y_2(1+y_3+y_3y_4),  \quad \mathcal{L}_3=y^{-1}_2y^{-1}_3y^{-1}_4y^{-1}_5y^{-1}_6y^{-1}_7(1+y_7+y_6y_7).
\end{align*}
By computation, we have 
\begin{multline*} 
F(\mathcal{G}[2,3,3])= (1+y_1)y_2(1+y_3+y_3y_4)(1+y_7+y_6y_7) + y_2y_3y_4y_5 y_6y_7(1+y_1)+ (1+y_7+y_6y_7).
\end{multline*} 
Using Lemma \ref{Lemma36}, we have
\begin{align*}
F|_{\mathbb{P}}(y_1,y_2,y_3,y_4,y_5,y_6,y_7)& =  (y_1y_2y_3y_4y_5y_6y_7)|_{\mathbb{P}} 
= \cfrac{1}{x'_3 x'_8}. 
\end{align*}

Therefore by Theorem \ref{snake graph formula}, 
\begin{align*}
& x[\alpha_{1,7}]  =  \frac{1}{x_1x_2x_3x_4x_5x_6x_7} \sum_{P\in \text{Match}(\mathcal{G})} \frac{ x(P) y(P) }{(\oplus_{P\in \text{Match}(\mathcal{G})} y(P))} \\
& = \frac{x_5x'_3 x'_8 }{x_2x_7} +  \frac{x_5 x_8 x'_3 x'_7 }{x_2x_6x_7}  +\frac{x_8 x'_3 x'_6}{x_2x_6}  + \frac{x_5 x'_2 x'_3 x'_8 }{x_1x_2x_3x_7} + \frac{x_5x_8 x'_2 x'_3 x'_7}{x_1x_2x_3x_6x_7} + \frac{x_8 x'_2 x'_3 x'_6}{x_1x_2x_3x_6} + \frac{x_5 x'_1 x'_3 x'_8}{x_1x_3x_7} \\
& + \frac{x_5 x'_2 x'_4 x'_8}{x_1x_3x_4x_7} + \frac{x_2 x_5 x'_1 x'_4 x'_8}{x_1x_3x_4x_7} + \frac{x_5 x_8 x'_1 x'_3 x'_7}{x_1x_3x_6x_7}  + \frac{x_5x_8 x'_2 x'_4 x'_7}{x_1x_3x_4x_6x_7} + \frac{x_2 x_5 x_8 x'_1 x'_4 x'_7}{x_1x_3x_4x_6x_7} + \frac{x_8 x'_1 x'_3 x'_6}{x_1x_3x_6}  \\
& + \frac{x_8 x'_2 x'_4 x'_6}{x_1x_3x_4x_6} + \frac{x_2 x_8 x'_1 x'_4 x'_6}{x_1x_3x_4x_6} + \frac{x_2 x'_1 x'_5 x'_8}{x_1x_4x_7}  +\frac{x_2 x_8 x'_1 x'_5 x'_7}{x_1x_4x_6x_7} + \frac{x_2 x_8 x'_1 x'_5 x'_6}{x_1x_4x_5x_6} + \frac{x'_2 x'_5 x'_8}{x_1x_4x_7}\\
&  + \frac{x_8 x'_2 x'_5 x'_7}{x_1x_4x_6x_7} + \frac{x_8 x'_2 x'_5 x'_6}{x_1x_4x_5x_6} + \frac{x_8 x'_2x'_6}{x_1x_5}  + \frac{x_2 x_8 x'_1 x'_6}{x_1x_5}.
\end{align*}

Replacing $x_\ell, x'_\ell$ by $q$-characters of the corresponding initial simple modules, we obtain the $q$-character of $L(Y_{1,-3}Y_{3,-7}Y_{6,-2}Y_{8,-6})$.
\end{example}

\subsection{Relation with Brito and Chari's results} \label{relation Brito-Chari result}
In \cite[Proposition in Section 2.5]{BC19}, Brito and Chari gave a non-recursive formula of $x[\alpha_{i,j}]$ using two sets  $\Gamma_{i,j}$ and $\Gamma'_{i,j}$. Recall that in \cite[Section 2.3]{BC19}, the set $\Gamma_{i,j}$ for $i,j\in I$  is defined as follows. $\Gamma_{i,j}=\{0\}$ if $j<i$ and if $i\leq j$ then $\Gamma_{i,j}$ consists of $(0,1)$ sequences $\varepsilon:=(\varepsilon_i,\ldots,\varepsilon_{j+1})$ with $(j-i+2)$ length subject to the following four conditions:
\begin{align}
& \varepsilon_i + \cdots +  \varepsilon_{i_\diamond} \leq 1 \leq \varepsilon_i + \cdots +  \varepsilon_{i_\diamond+1} \quad \text{$i_\diamond \leq j$}, \label{BC231} \\
& \varepsilon_{m+1} + \cdots +  \varepsilon_{(m+1)_\diamond} \leq 1 \leq \varepsilon_{m+1} + \cdots +  \varepsilon_{(m+1)_\diamond+1}  \quad \text{$i_\diamond \leq m=m_\diamond < j_\bullet$}, \label{BC232} \\
& \varepsilon_{\max\{i,j_\bullet+1\}} + \cdots +  \varepsilon_j  \leq 1, \label{BC233} \\
& \varepsilon_{j+1} = \begin{cases}
1  & \text{if $j=j_\diamond$}, \\
1- (\varepsilon_{\max\{i,j_\bullet+1\}} + \cdots +  \varepsilon_j ) & \text{otherwise}.
\end{cases} \label{BC234} 
\end{align} 
The set $\Gamma'_{i,j}$ is determined by $\Gamma_{i,j}$ consisting of $(-1,0,1)$ sequences $\varepsilon':=(\varepsilon'_i,\ldots,\varepsilon'_{j+1})$ \cite[Section 2.4]{BC19}: 
\begin{itemize}
\item[(1)] if $i_\bullet=m_\bullet$ or $\varepsilon_{\max\{i,(m_\bullet)_\bullet+1\}} + \cdots +  \varepsilon_{m_\bullet}=1$, then 
\[
\varepsilon'_m = \begin{cases}
(\delta_{m,m_\diamond}-1) \varepsilon_{m+1}- \delta_{m,m_\diamond} & \text{if $\varepsilon_{\max\{i,m_\bullet+1\}} + \cdots +  \varepsilon_m=0$}, \\
\delta_{m,m_\diamond}-(\varepsilon_m+\varepsilon_{m+1}) & \text{if $\varepsilon_{\max\{i,m_\bullet+1\}} + \cdots +  \varepsilon_m=1$},
\end{cases}
\]
\item[(2)]  if $m_\bullet\geq i$ and $\varepsilon_{\max\{i,(m_\bullet)_\bullet+1\}} + \cdots +  \varepsilon_{m_\bullet}=0$, then $\varepsilon'_m=\delta_{m,m_\diamond}(1-\varepsilon_{m+1})$, and 
\item[(3)]  if $j=j_\diamond$, then $\varepsilon'_{j+1} = 1- (\varepsilon_{\max\{i,j_\bullet+1\}} + \cdots +  \varepsilon_j )$, otherwise $\varepsilon'_{j+1}=\varepsilon_{\max\{i,j_\bullet+1\}} + \cdots +  \varepsilon_j$.
\end{itemize} 

Following \cite[Section 2.5]{BC19}, for any $\varepsilon\in \Gamma_{i,j}$ and the corresponding $\varepsilon' \in \Gamma'_{i,j}$, one defines
\begin{align*}
m^\varepsilon_{i,j} = x^{(1-\varepsilon_i)}_{i-1} x^{\varepsilon'_i}_i \cdots x^{\varepsilon'_j}_j x^{\varepsilon'_{j+1}}_{j+1}, \quad  f^\varepsilon_{i,j} = (x'_i)^{\varepsilon_i} \cdots (x'_j)^{\varepsilon_j}  (x'_{j+1})^{(1-\delta_{j,j_\diamond}) \varepsilon_{j+1}}.
\end{align*}

\begin{example}
For $x[\alpha_{1,7}]$ in  Example \ref{exmaple39}, the sets $\Gamma_{1,7}$ and $\Gamma'_{1,7}$, monomials $m^\varepsilon_{1,7}$ and $f^\varepsilon_{1,7}$ are listed in Table \ref{table:sequence}. The $(i+1)$-th row  ($1\leq i \leq 23$) in Table \ref{table:sequence} corresponds to the $i$-th perfect matching in Table \ref{perfect matchings x17} from up to down and from left to right. 

\begin{table}
\center
\begin{tabular}{|c|c|c|c|}
\hline
$\Gamma_{1,7}$  & $\Gamma'_{1,7}$ & $m^\varepsilon_{1,7}$  & $f^\varepsilon_{1,7}$ \\
\hline
 $(0,0,1,0,0,0,0,1)$ & $(0,-1,0,0,1,0,-1,0)$  & $x^{-1}_2x_5x^{-1}_7$   & $x'_3x'_8$ \\
\hline
$(0,0,1,0,0,0,1,0)$ & $(0,-1,0,0,1,-1,-1,1)$  &  $x^{-1}_2x_5x^{-1}_6x^{-1}_7x_8$  & $x'_3x'_7$  \\
\hline
$(0,0,1,0,0,1,0,0)$ & $(0,-1,0,0,0,-1,0,1)$  &  $x^{-1}_2x^{-1}_6x_8$  & $x'_3x'_6$  \\
\hline
$(0,1,1,0,0,0,0,1)$  &  $(-1,-1,-1,0,1,0,-1,0)$ & $x^{-1}_1x^{-1}_2x^{-1}_3x_5x^{-1}_7$  & $x'_2x'_3x'_8$  \\
\hline
$(0,1,1,0,0,0,1,0)$ & $(-1,-1,-1,0,1,-1,-1,1)$  &  $x^{-1}_1x^{-1}_2x^{-1}_3x_5x^{-1}_6x^{-1}_7x_8$  & $x'_2x'_3x'_7$  \\
\hline
$(0,1,1,0,0,1,0,0)$ & $(-1,-1,-1,0,0,-1,0,1)$   &  $x^{-1}_1x^{-1}_2x^{-1}_3x^{-1}_6x_8$  & $x'_2x'_3x'_6$  \\ 
\hline
$(1,0,1,0,0,0,0,1)$ & $(-1,0,-1,0,1,0,-1,0)$  & $x^{-1}_1x^{-1}_3x_5x^{-1}_7$  & $x'_1x'_3x'_8$  \\
\hline
 $(0,1,0,1,0,0,0,1)$ & $(-1,0,-1,-1,1,0,-1,0)$   &  $x^{-1}_1x^{-1}_3x^{-1}_4x_5x^{-1}_7$  & $x'_2x'_4x'_8$ \\ 
\hline
 $(1,0,0,1,0,0,0,1)$ & $(-1,1,-1,-1,1,0,-1,0)$   & $x^{-1}_1x_2x^{-1}_3x^{-1}_4x_5x^{-1}_7$  & $x'_1x'_4x'_8$  \\
\hline
$(1,0,1,0,0,0,1,0)$ & $(-1,0,-1,0,1,-1,-1,1)$  &  $x^{-1}_1x^{-1}_3x_5x^{-1}_6x^{-1}_7x_8$  & $x'_1x'_3x'_7$  \\
\hline
$(0,1,0,1,0,0,1,0)$ &  $(-1,0,-1,-1,1,-1,-1,1)$  &  $x^{-1}_1x^{-1}_3x^{-1}_4x_5x^{-1}_6x^{-1}_7x_8$  & $x'_2x'_4x'_7$  \\ 
\hline
$(1,0,0,1,0,0,1,0)$ & $(-1,1,-1,-1,1,-1,-1,1)$  &  $x^{-1}_1x_2x^{-1}_3x^{-1}_4x_5x^{-1}_6x^{-1}_7x_8$  & $x'_1x'_4x'_7$  \\ 
\hline
 $(1,0,1,0,0,1,0,0)$ & $(-1,0,-1,0,0,-1,0,1)$  &  $x^{-1}_1x^{-1}_3x^{-1}_6x_8$  & $x'_1x'_3x'_6$ \\
\hline 
$(0,1,0,1,0,1,0,0)$ & $(-1,0,-1,-1,0,-1,0,1)$  &  $x^{-1}_1x^{-1}_3x^{-1}_4x^{-1}_6x_8$  & $x'_2x'_4x'_6$  \\
\hline
$(1,0,0,1,0,1,0,0)$ & $(-1,1,-1,-1,0,-1,0,1)$  &  $x^{-1}_1x_2x^{-1}_3x^{-1}_4x^{-1}_6x_8$  & $x'_1x'_4x'_6$  \\
\hline
 $(1,0,0,0,1,0,0,1)$ & $(-1,1,0,-1,0,0,-1,0)$  &  $x^{-1}_1x_2x^{-1}_4x^{-1}_7$  & $x'_1x'_5x'_8$ \\ 
\hline
$(1,0,0,0,1,0,1,0)$ &  $(-1,1,0,-1,0,-1,-1,1)$  &  $x^{-1}_1x_2x^{-1}_4x^{-1}_6x^{-1}_7x_8$  & $x'_1x'_5x'_7$  \\ 
\hline
$(1,0,0,0,1,1,0,0)$ & $(-1,1,0,-1,-1,-1,0,1)$  &  $x^{-1}_1x_2x^{-1}_4x^{-1}_5x^{-1}_6x_8$  & $x'_1x'_5x'_6$  \\
\hline
 $(0,1,0,0,1,0,0,1)$ & $(-1,0,0,-1,0,0,-1,0)$  &  $x^{-1}_1x^{-1}_4x^{-1}_7$  & $x'_2x'_5x'_8$ \\ 
\hline
$(0,1,0,0,1,0,1,0)$ & $(-1,0,0,-1,0,-1,-1,1)$  &  $x^{-1}_1x^{-1}_4x^{-1}_6x^{-1}_7x_8$  & $x'_2x'_5x'_7$  \\ 
\hline
$(0,1,0,0,1,1,0,0)$ & $(-1,0,0,-1,-1,-1,0,1)$  &  $x^{-1}_1x^{-1}_4x^{-1}_5x^{-1}_6x_8$  & $x'_2x'_5x'_6$  \\
\hline
$(0,1,0,0,0,1,0,0)$ & $(-1,0,0,0,-1,0,0,1)$  &  $x^{-1}_1x^{-1}_5x_8$  & $x'_2x'_6$  \\
\hline
$(1,0,0,0,0,1,0,0)$ & $(-1,1,0,0,-1,0,0,1)$  &  $x^{-1}_1x_2x^{-1}_5x_8$  & $x'_1x'_6$  \\
\hline
\end{tabular}
\caption{The sets $\Gamma_{1,7}$ and $\Gamma'_{1,7}$, monomials $m^\varepsilon_{1,7}$ and $f^\varepsilon_{1,7}$.} \label{table:sequence}
\end{table}
\end{example}

In general, we have the following theorem.
\begin{theorem}\label{thm related to Brito and chari}
Let $\mathcal{G}:=\mathcal{G}[a_1,a_2,\ldots,a_n]$ be the snake graph associated to $x[\alpha_{i,j}]$ $(i\leq j)$. Then $|\Gamma_{i,j}|=|{\rm Match}(\mathcal{G})|=\mathcal{N}[a_1,a_2,\ldots,a_n]$.
\end{theorem}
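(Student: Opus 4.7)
The second equality $|\text{Match}(\mathcal{G})| = \mathcal{N}[a_1,\ldots,a_n]$ is immediate from \cite[Theorem 3.4]{CS18} recalled in Section~\ref{snake graphs}, so the task reduces to establishing $|\Gamma_{i,j}| = |\text{Match}(\mathcal{G})|$.

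My plan is to exhibit a bijection between the two sets by exploiting that both index the terms of the same Laurent expansion. On one side, \cite[Proposition in Section 2.5]{BC19} gives
\[
x[\alpha_{i,j}] = \sum_{\varepsilon \in \Gamma_{i,j}} m^\varepsilon_{i,j}\, f^\varepsilon_{i,j},
\]
and on the other side Theorem~\ref{snake graph formula} expresses $x[\alpha_{i,j}]$ as a sum over $\text{Match}(\mathcal{G})$ of the normalized monomials $x(P)y(P)/\bigl(\bigoplus_Q y(Q)\cdot\prod_\ell x_\ell\bigr)$. Since the Laurent expansion of a cluster variable in the initial extended cluster is unique, it suffices to prove that the monomials appearing inside each of the two sums are pairwise distinct; the bijection (and hence the equality of cardinalities) is then forced by term-by-term comparison, as already illustrated by the explicit correspondence in Table~\ref{table:sequence}.

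Distinctness on the snake-graph side is standard for the MSW formula: if $P \neq P'$, then the symmetric difference $P \ominus P'$ encloses at least one tile $G_\ell$, so the exponent of $x_\ell$ in $x(P)$ differs from that in $x(P')$, and an analogous observation controls $y(P)$. For distinctness on the $\Gamma_{i,j}$ side, I would verify that $\varepsilon$ can be reconstructed from the monomial $m^\varepsilon_{i,j} f^\varepsilon_{i,j}$: the exponent of each $x'_k$ in $f^\varepsilon_{i,j}$ equals $\varepsilon_k$ for $k \leq j$, while $\varepsilon_{j+1}$ is either pinned to $1$ via (\ref{BC234}) when $j = j_\diamond$ or equals the exponent of $x'_{j+1}$ otherwise. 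This makes $\varepsilon \mapsto m^\varepsilon_{i,j} f^\varepsilon_{i,j}$ injective.

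The main technical obstacle is the case analysis needed to confirm this injectivity rigorously, since the three-branch rule defining $\varepsilon'$ from $\varepsilon$ depends on the positions of $k$ relative to the sources/sinks $k_\bullet, k_\diamond$ and to the endpoints $i, j$, and one must ensure that distinct $\varepsilon$ never produce the same pair $(m^\varepsilon_{i,j}, f^\varepsilon_{i,j})$ by accidental cancellation between the $x_k$-part and the $x'_k$-part. The admissibility conditions (\ref{BC231})--(\ref{BC234}) are precisely designed to rule out such collisions, and a case-by-case check of the three branches in the definition of $\varepsilon'$ against these constraints completes the argument. Combining the two distinctness statements with Canakci--Schiffler's count then yields $|\Gamma_{i,j}| = |\text{Match}(\mathcal{G})| = \mathcal{N}[a_1,\ldots,a_n]$.
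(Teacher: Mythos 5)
Your route is genuinely different from the paper's. The paper never touches the Laurent expansions here: it proves $|\Gamma_{i,j}|=\mathcal{N}[a_1,\ldots,a_n]$ by induction on $j-i$, matching the recursion $|\Gamma_{i,j}|=|\Gamma_{i,j-1}|+|\Gamma_{i,j_\bullet-1}|$ from \cite[Lemma in Section 2.3]{BC19} against the continued-fraction recursion for $\mathcal{N}$, with base case $|\Gamma_{i,i}|=2=\mathcal{N}[2]$ and a case split according to whether $i+k=(i+k)_\diamond$ (which governs whether the last entry of the continued fraction increments or a new entry is appended). That argument is self-contained combinatorics and, as a by-product, records how the continued fraction of the snake graph tracks the source/sink structure of $Q_\xi$. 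Your argument instead plays the Brito--Chari expansion of \cite[Proposition in Section 2.5]{BC19} against Theorem \ref{snake graph formula}; if carried through it would buy strictly more, namely the term-by-term correspondence that the paper only states as an expectation in Remark \ref{conjecture313}(2).

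That extra strength is also where your write-up is soft. Injectivity of $\varepsilon\mapsto m^\varepsilon_{i,j}f^\varepsilon_{i,j}$ is fine and easier than you make it sound: the exponent of $x'_k$ in $f^\varepsilon_{i,j}$ recovers $\varepsilon_k$ for $i\le k\le j$, $\varepsilon_{j+1}$ is then forced by (\ref{BC234}), and no "cancellation between the $x_k$-part and the $x'_k$-part" can occur because the $x_k$ and $x'_k$ are independent generators of the extended cluster. But distinctness on the matching side is not "standard": enclosing an extra tile $G_\ell$ changes $y(P)$ in the formal variables $y_\ell$, not the exponent of $x_\ell$ in $x(P)$, and in Theorem \ref{snake graph formula} each $y_\ell$ is specialized to a Laurent monomial in the frozen variables (e.g.\ $y_5=1/x'_5$ in Example \ref{exmaple39}), so distinct matchings could a priori collide after specialization --- this is exactly the point the paper does not prove. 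Fortunately your conclusion does not need it: both expansions are sums of Laurent monomials, each occurring with coefficient $+1$, so either evaluate every variable at $1$ to get $|\Gamma_{i,j}|=x[\alpha_{i,j}]|_{\mathbf{1}}=|{\rm Match}(\mathcal{G})|$ directly, or note that injectivity on the $\Gamma$-side alone forces every coefficient of the Laurent polynomial to be $1$, whence the number of matchings equals the sum of the coefficients equals $|\Gamma_{i,j}|$. With that replacement your proof is correct; the second equality is indeed immediate from \cite[Theorem 3.4]{CS18}.
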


\begin{proof}
By \cite[Lemma in Section 2.3]{BC19}, we have $|\Gamma_{i,j}|=|\Gamma_{i,j-1}| + |\Gamma_{i,j_\bullet-1}|$ for $j>i$ and $|\Gamma_{i,j}|=1$ for $j<i$. 

Recall that $\mathcal{G}$ has $\mathcal{N}[a_1,a_2,\ldots,a_n] $ many perfect matchings \cite[Theorem 3.4]{CS18} and 
\[
\mathcal{N}[a_1,a_2,\ldots,a_n] = \mathcal{N}[a_1,a_2,\ldots,a_n-1,1] = \mathcal{N}[a_1,a_2,\ldots,a_n-1] + \mathcal{N}[a_1,a_2,\ldots,a_{n-1}].
\]

In the following, we prove that $|\Gamma_{i,j}|=\mathcal{N}[a_1,a_2,\ldots,a_n]$ by an induction on $j-i$. If $j=i$, then $\mathcal{N}[2]=2$ and by the proof of \cite[Proposition in Section 2.5]{BC19}, we have 
\begin{align*}
\Gamma_{i,i} = \begin{cases} 
\{(0,1), (1,1)\} & \textit{ if $i=i_\diamond$}, \\
\{(0,1), (1,0)\} & \textit{ if $i\neq i_\diamond$}, 
\end{cases} \quad
\Gamma'_{i,i} = \begin{cases} 
\{(-1,1), (-1,0)\} & \textit{ if $i=i_\diamond$}, \\
\{(-1,0), (-1,1)\} & \textit{ if $i\neq i_\diamond$}.
\end{cases}
\end{align*}



Assume that our formula holds for $j-i\leq k$ and $|\Gamma_{i,i+k}|=\mathcal{N}[a_1,a_2,\ldots,a_n]$, where $\mathcal{G}[a_1,a_2,\ldots,a_n]$ is the snake graph associated to $x[\alpha_{i,i+k}]$. If $i+k\neq (i+k)_\diamond$, then 
\begin{align*}
|\Gamma_{i,i+k+1}| & = |\Gamma_{i,i+k}| + |\Gamma_{i,(i+k+1)_\diamond-1}|= \mathcal{N}[a_1,a_2,\ldots,a_n] + \mathcal{N}[a_1,a_2,\ldots,a_{n-1}] \\
& = \mathcal{N}[a_1,a_2,\ldots,a_n+1].
\end{align*}
In this case, $\mathcal{G}[a_1,a_2,\ldots,a_n+1]$ is the snake graph associated to $x[\alpha_{i,i+k+1}]$. 

If $i+k=(i+k)_\diamond$, then 
\begin{align*}
|\Gamma_{i,i+k+1}| & = |\Gamma_{i,i+k}| + |\Gamma_{i,(i+k+1)_\diamond-1}| = \mathcal{N}[a_1,a_2,\ldots,a_n] +  |\Gamma_{i, i+k-1}|  \\
& = \mathcal{N}[a_1,a_2,\ldots,a_n-1,1]+\mathcal{N}[a_1,a_2,\ldots,a_n-1] \\
& = \mathcal{N}[a_1,a_2,\ldots,a_n-1,2].
\end{align*}
In this case, $\mathcal{G}[a_1,a_2,\ldots,a_n-1,2]$ is the snake graph associated to $x[\alpha_{i,i+k+1}]$.
\end{proof}

We refine Theorem \ref{thm related to Brito and chari} as follows.
\begin{theorem}
Let $\mathcal{G}=(G_1,\ldots,G_{j-i+1})$ be the labeled snake graph associated to $x[\alpha_{i,j}]$. Then

\begin{align}
|\{(\varepsilon_i,\ldots,\varepsilon_{i_\diamond+1}) \mid (\varepsilon_i,\ldots,\varepsilon_{i_\diamond+1}, \ldots) \in \Gamma_{i,j}\}|=|{\rm Match}(G_1,\ldots,G_{i_\diamond-i+2})|,  \label{refined equation1}
\end{align} 
\begin{align}
\begin{split}
& |\{(\varepsilon_{m+1}, \ldots, \varepsilon_{(m+1)_\diamond+1}) \mid (\ldots, \varepsilon_{m+1}, \ldots, \varepsilon_{(m+1)_\diamond+1}, \ldots) \in  \Gamma_{i,j}\} | \\
& = |{\rm Match}(G_{m-i+2},\ldots,G_{(m+1)_\diamond-i+2})|,  \label{refined equation2}
\end{split}
\end{align} 
\begin{align}
\begin{split}
& |\{(\varepsilon_{\max\{i,j_\bullet+1\}}, \ldots, \varepsilon_{j+1}) \mid (\ldots, \varepsilon_{\max\{i,j_\bullet+1\}}, \ldots, \varepsilon_{j+1}) \in \Gamma_{i,j}\}| \\
& =|{\rm Match}(G_{\max\{i,j_\bullet+1\}-i+1} ,\ldots,G_{j-i+1})|. \label{refined equation3}
\end{split}
\end{align}
\end{theorem}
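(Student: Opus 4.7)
The strategy is to reduce each of the three equalities to Theorem~\ref{thm related to Brito and chari} applied to a smaller HL-module. I will first identify the relevant subsnake graphs: by Definition~\ref{HL and snake graph}, $(G_1,\ldots,G_{i_\diamond-i+2})$ is the snake graph of $x[\alpha_{i,i_\diamond+1}]$, each $(G_{m-i+2},\ldots,G_{(m+1)_\diamond-i+2})$ is the snake graph of $x[\alpha_{m+1,(m+1)_\diamond+1}]$, and $(G_{\max\{i,j_\bullet+1\}-i+1},\ldots,G_{j-i+1})$ is that of $x[\alpha_{\max\{i,j_\bullet+1\},j}]$. These identifications will follow because the source/sink pattern of $Q_\xi$ restricted to each sub-interval of vertices exactly matches the pattern that determined the placement of those tiles in $\mathcal{G}$; the only interior source/sink vertices of each sub-interval are $i_\diamond$ and $(m+1)_\diamond$ respectively, while none lie strictly between $j_\bullet$ and $j$ in the final case.

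The core of the proof is to show that the image of the projection of $\Gamma_{i,j}$ onto each segment is in bijection with $\Gamma_{i',j'}$ for the corresponding sub-interval. For the first and intermediate segments, $\Gamma_{i',j'}$ has one more coordinate than the projected segment, but that coordinate is determined uniquely by (\ref{BC234}) applied to the sub-module, giving a canonical bijection; for the final segment the lengths already match. The forward inclusion (projection image $\subseteq \Gamma_{i',j'}$) is immediate, since the segment condition (\ref{BC231}), (\ref{BC232}), or (\ref{BC233}) in the definition of $\Gamma_{i,j}$ is exactly the defining condition of $\Gamma_{i',j'}$ on that segment. For the reverse inclusion (surjectivity of projection), I will construct an explicit completion: given any tuple satisfying the local condition, set the remaining coordinates to $0$ by default and let each subsequent condition (\ref{BC232}) force a unique $1$ at the minimal position needed to secure the lower bound $\geq 1$, finally using (\ref{BC234}) to fix $\varepsilon_{j+1}$. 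Combined with Theorem~\ref{thm related to Brito and chari}, this yields $|\Gamma_{i',j'}|=|{\rm Match}(\text{snake graph of }x[\alpha_{i',j'}])|$, which equals the perfect-matching count of the required subsnake graph by the first step.

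The main obstacle will be the surjectivity step. The boundary coordinates $\varepsilon_{i_\diamond+1}$ and $\varepsilon_{(m+1)_\diamond+1}$ play a double role, appearing as the rightmost coordinate of one segment's condition and the leftmost of the next, so the greedy completion must be checked in both the $\varepsilon=0$ and $\varepsilon=1$ sub-cases to ensure the next segment's inequality is still achievable. A secondary case distinction is needed for the final segment when $j=j_\diamond$ (forcing $\varepsilon_{j+1}=1$) versus $j\neq j_\diamond$ (where $\varepsilon_{j+1}$ is determined by the alternative branch of (\ref{BC234})); both should be handled by the same procedure, but the identification of the final subsnake graph with the snake graph of $x[\alpha_{\max\{i,j_\bullet+1\},j}]$ requires verifying that both branches of (\ref{BC234}) yield a consistent bijection.
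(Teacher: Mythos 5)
Your proposal is correct in outline but reaches the conclusion by a genuinely different route from the paper. The paper proves all three identities by direct enumeration of both sides: it observes that each left-hand projection is exactly the set of $(0,1)$-tuples whose block sum and final entry satisfy the single local condition ((\ref{BC231}), (\ref{BC232}), or (\ref{BC233})--(\ref{BC234})), counts these sets as $2(i_\diamond-i)+3$, $2((m+1)_\diamond-m)+1$ and $j-\max\{i,j_\bullet+1\}+2$ respectively, and then matches these numbers against the matching counts of the subgraphs, which are a zigzag glued to one straight tile (hence $\mathcal{N}[a,2]=2a+1$ matchings) in the first two cases and a pure zigzag (hence $d+1$ matchings) in the third. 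You instead identify each projection with $\Gamma_{i',j'}$ for the corresponding sub-interval and each subgraph with the snake graph of $x[\alpha_{i',j'}]$, and then quote Theorem \ref{thm related to Brito and chari}; this avoids any recounting, but it obliges you to verify that the landmarks $i'_\diamond$ and $j'_\bullet$ of the sub-interval render the remaining defining conditions of $\Gamma_{i',j'}$ vacuous (they do, since the only interior source or sink of each sub-interval sits at its right end), and that the sub-snake-graph coincides with the standalone one (it does, up to the harmless reflection caused by the convention that the first two tiles of a snake graph lie in one horizontal row, which does not change the number of perfect matchings). A point in your favour: both arguments require the surjectivity of the projection from $\Gamma_{i,j}$ onto each segment, i.e.\ that every locally admissible tuple extends to a full element of $\Gamma_{i,j}$; the paper passes over this silently, whereas your greedy block-by-block completion, checked in both the $\varepsilon=0$ and $\varepsilon=1$ cases for the shared boundary coordinate, supplies it and does work, because each block constraint only requires the block sum to be at most $1$ and to reach $1$ after adjoining the next boundary entry.
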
 

\begin{proof}
Indeed, by (\ref{BC231}), the set in the left hand side of (\ref{refined equation1}) is equivalent to 
\[
\{(\varepsilon_i,\ldots,\varepsilon_{i_\diamond+1})\in \mathbb{Z}_{\geq 0}^{(i_\diamond-i+2)} \mid (\varepsilon_i + \cdots +  \varepsilon_{i_\diamond}, \varepsilon_{i_\diamond+1})\in \{(0,1), (1,0), (1,1)\}\},
\]
the cardinality of the set is $2(i_\diamond-i)+3$; by (\ref{BC232}), the set in the left hand side of (\ref{refined equation2}) is equivalent to 
\begin{align*}
\left\{(\varepsilon_{m+1}, \ldots, \varepsilon_{(m+1)_\diamond+1})\in \mathbb{Z}_{\geq 0}^{((m+1)_\diamond-m+1)} \mid (\varepsilon_{m+1}+\cdots+\varepsilon_{(m+1)_\diamond}, \varepsilon_{(m+1)_\diamond+1}) \right. \\
\left. \in \{(0,1), (1,0), (1,1)\}  \right\},
\end{align*}
the cardinality of the set is $2((m+1)_\diamond-m)+1$; by (\ref{BC233})--(\ref{BC234}), the set in the left hand side of (\ref{refined equation3}) is equivalent to 
\begin{align*}
\begin{cases} 
\left\{(\varepsilon_{\max\{i,j_\bullet+1\}}, \ldots, \varepsilon_{j+1})  \in \mathbb{Z}_{\geq 0}^{(j-\max\{i,j_\bullet+1\}+2)} \mid (\varepsilon_{\max\{i,j_\bullet+1\}}, \ldots, \varepsilon_j, \varepsilon_{j+1})  \right. \\
\left. \in  \{(0,1), (1,1)\} \right\},  & \text{ if $j=j_\diamond$}, \\
\left\{(\varepsilon_{\max\{i,j_\bullet+1\}}, \ldots, \varepsilon_{j+1})  \in \mathbb{Z}_{\geq 0}^{(j-\max\{i,j_\bullet+1\}+2)} \mid (\varepsilon_{\max\{i,j_\bullet+1\}}, \ldots, \varepsilon_j, \varepsilon_{j+1})  \right. \\
\left. \in  \{(0,1), (1,0) \right\}, & \text{ if $j\neq j_\diamond$}, \\
\end{cases}
\end{align*}
the cardinality of the set is $(j- \max\{i,j_\bullet+1\}+2)$.

In the other hand, the snake graph in (\ref{refined equation1}) or (\ref{refined equation2}) is a union of a zigzag snake graph and a tile, whereas the snake graph in (\ref{refined equation3}) is a zigzag snake graph. A zigzag snake graph with $d$ tiles has $(d+1)$ many perfect matchings. Finally, we apply the following formula
\[
\mathcal{N}[a,2] = 2a+1,
\]
where $a$ is one less than the number of tiles in a snake graph.
\end{proof}
 
%
%
%

\begin{remark}\label{conjecture313}
Let $\mathcal{G}$ be the labeled snake graph associated to $x[\alpha_{i,j}]$ and $\Gamma_{i,j}$ the set defined in the beginning of this section. There is a partial order on the set ${\rm Match}(\mathcal{G})$ \cite{MSW11}. We expect that the following are true and they are equivalent.
\begin{itemize}
\item[(1)] There exsits a partial order on $\Gamma_{i,j}$ such that there is a bijection from $\Gamma_{i,j}$ to ${\rm Match}(\mathcal{G})$ preserving the order.
\item[(2)] For any $\varepsilon \in \Gamma_{i,j}$, there exists a unique perfect matching $P^\varepsilon\in {\rm Match}(\mathcal{G})$ such that
\begin{align*}
m^\varepsilon_{i,j} f^\varepsilon_{i,j} = \cfrac{x(P^\varepsilon)y(P^\varepsilon)}{(\prod_{\ell=i}^j x_\ell) y(P_+)|_{\mathbb{P}}}. 
\end{align*} 
\end{itemize}
\end{remark}

\subsection{The highest and lowest $\ell$-weight monomials of Hernandez-Leclerc modules}
In this subsection, we determine the highest and lowest $\ell$-weight monomials in the $q$-character of an HL-module using perfect matchings of snake graphs. 

\begin{lemma}\label{the lemma310}
Let $Q_\xi$ be a quiver associated to a height function $\xi$. Assume that there is no source or sink vertex in the interval $(i,j)$. Then there exists a unique perfect matching $P$ such that
\[
\cfrac{x(P)y(P)}{{(x_i x_{i+1} \ldots x_j) F|_{\mathbb{P}}(y_i,y_{i+1},\ldots,y_j)}} = \cfrac{x'_i x^{(1-\delta_{j,j_\diamond})}_{j+1}}{x_i}  \ {\rm or } \ \cfrac{x'_i x'_{i+1} x^{(1-\delta_{j,j_\diamond})}_{j+1}}{x_ix_{i+1}}.
\]
\end{lemma}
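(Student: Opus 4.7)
The plan is to reduce the identity to a single explicit calculation for the maximum matching $P_+$ of a zigzag snake graph, and then verify uniqueness by exploiting the linearity of the poset of matchings in that case.

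First, I would observe that by the hypothesis that no source or sink vertex lies in the open interval $(i,j)$, the construction in Definition~\ref{HL and snake graph} forces every triple of consecutive tiles $(G_{\ell-2},G_{\ell-1},G_\ell)$ with $i+2\leq \ell \leq j$ to be zigzag. Hence the labeled snake graph $\mathcal{G}$ associated to $x[\alpha_{i,j}]$ is a (labeled) zigzag snake graph with exactly $(j-i+1)$ tiles. By Lemma~\ref{Lemma36},
\[
F|_{\mathbb{P}}(y_i,y_{i+1},\ldots,y_j)=y(P_+)|_{\mathbb{P}}=(y_iy_{i+1}\cdots y_j)|_{\mathbb{P}},
\]
so the claimed identity is equivalent to showing that there is a unique $P\in\mathrm{Match}(\mathcal{G})$ with
\[
\frac{x(P)\,y(P)}{(x_i\cdots x_j)\,y(P_+)|_{\mathbb{P}}}\;=\;\frac{x'_i\,x_{j+1}^{1-\delta_{j,j_\diamond}}}{x_i}\quad\text{or}\quad\frac{x'_i\,x'_{i+1}\,x_{j+1}^{1-\delta_{j,j_\diamond}}}{x_ix_{i+1}}.
\]

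Second, I would take $P=P_+$ and compute the left-hand side explicitly. Because $\mathcal{G}$ is zigzag, the two matchings $P_-,P_+$ consist entirely of boundary edges, and their weight monomials $x(P_-),x(P_+)$ are direct products of the edge labels assigned in Section~\ref{HL-modules and snake graphs}. Since $y(P_+)=\prod_{\ell=i}^j y_\ell$ and each $y_\ell=\prod_{k\to\ell} u_k\prod_{\ell\to k}u_k^{-1}$ is read off $Q_\xi$, the tropical evaluation $y(P_+)|_{\mathbb{P}}$ cancels precisely the frozen contributions that come from arrows between mutable vertices in $[i,j]$, leaving only the contributions coming from the two endpoints of the interval.

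Third, I would split into the two cases according to the orientation at the vertex $i$ in $Q_\xi$. In the case that $i$ is a source or sink (with respect to the subquiver on $[i,j]$), the initial tile $G_i$ contributes an extra frozen factor $x'_{i+1}$ to the numerator and an extra $x_{i+1}$ to the denominator, yielding the second alternative $x'_i x'_{i+1} x_{j+1}^{1-\delta_{j,j_\diamond}}/(x_ix_{i+1})$; in the opposite orientation only $x'_i$ survives, yielding the first alternative $x'_i x_{j+1}^{1-\delta_{j,j_\diamond}}/x_i$. The power of $x_{j+1}$ is in turn controlled by whether $j=j_\diamond$, which governs whether the last tile $G_j$ contributes a boundary edge labeled $j+1$ on the other end of $\mathcal{G}$.

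Finally, for uniqueness: as $\mathcal{G}$ is zigzag, $\mathrm{Match}(\mathcal{G})$ is a totally ordered chain $P_-=P_0<P_1<\cdots<P_{j-i+1}=P_+$ (with each covering obtained by turning a single tile), and the height monomials $y(P_k)$ differ by the factor corresponding to that tile. Tracking the resulting change of the ratio $x(P_k)y(P_k)/((x_i\cdots x_j)y(P_+)|_{\mathbb{P}})$ through a covering shows that the target monomials above are attained at exactly one step of the chain, namely $P=P_+$. The main obstacle will be bookkeeping in step three: correctly tracking which copies of $x'_\ell$ and which powers of $x_\ell$ survive the tropical cancellation, and this has to be done separately for the two orientations at $i$ and the two cases $j=j_\diamond$ versus $j\neq j_\diamond$.
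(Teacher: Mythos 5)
The central claim of your proposal --- that the required matching is always $P=P_+$ and that the identity follows by ``computing the left-hand side at $P_+$'' --- is not correct, and this is where the argument breaks down. The paper's proof distinguishes four local configurations of $Q_\xi$ at the vertex $i$ (the quivers (\ref{case 34})--(\ref{case 37})), and the unique matching $P$ realizing the right-hand side is different in each: it is $P_+$ only for (\ref{case 34}); it is $P_-$ for (\ref{case 36}); and for (\ref{case 35}) and (\ref{case 37}) it is the matching $_W\mathcal{G}\cup e_1\cup P_{\mp}|_{(\mathcal{G}\setminus G_i)}$, which contains the \emph{interior} edge $e_1$ and hence is neither $P_-$ nor $P_+$. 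One can see concretely that $P_+$ fails in case (\ref{case 36}): there $y(P_+)=y_iy_{i+1}\cdots y_j$ carries a factor $x_{i-1}$ in its numerator coming from the arrow $(i-1)\to i$, and since $F|_{\mathbb{P}}=1/(x'_i\,x_{j+1}^{1-\delta_{j,j_\diamond}})$ contains no $x_{i-1}$, the ratio $x(P_+)y(P_+)/\bigl((x_i\cdots x_j)F|_{\mathbb{P}}\bigr)$ retains a spurious factor $x_{i-1}$ that appears in neither alternative on the right-hand side. By the uniqueness assertion (which the paper does prove via the total order on matchings of a zigzag graph, as you suggest), the correct matching in that case is $P_-$, so your computation at $P_+$ cannot be repaired by bookkeeping alone.

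Relatedly, your case split is too coarse: the two alternatives on the right-hand side are not distinguished by ``$P_-$ versus $P_+$'' but by whether the matching on the first tile $G_i$ uses the west and east edges (contributing the extra factor $x'_{i+1}/x_{i+1}$, cases (\ref{case 35}) and (\ref{case 37})) or a pair of opposite boundary edges of $G_i$ lying in $P_\pm$ (cases (\ref{case 34}) and (\ref{case 36})). You would need to (a) split into the four orientations at $i$, (b) define $P$ case by case as above, and (c) compute $F|_{\mathbb{P}}$ separately in each case (it is $1/(x_{i-1}x_{j+1}^{\delta_{j,j_\diamond}}{x'}_{j+1}^{1-\delta_{j,j_\diamond}})$, $1/(x_{i-1}x'_{i+1}x_{j+1}^{1-\delta_{j,j_\diamond}})$, $1/(x'_ix_{j+1}^{1-\delta_{j,j_\diamond}})$, $1/(x'_ix_{j+1}^{\delta_{j,j_\diamond}}{x'}_{j+1}^{1-\delta_{j,j_\diamond}})$ respectively), before the ratio collapses to the stated form. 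Your observation that $\mathcal{G}$ is zigzag and your use of Lemma \ref{Lemma36}, as well as the uniqueness argument via the chain structure of $\mathrm{Match}(\mathcal{G})$, do match the paper; it is the identification of the witness $P$ that is wrong.
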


\begin{proof}
If $\xi(i) = \xi(i+1)+1$, then there is a full subquiver of $Q_\xi$, which is one of the following quivers

\begin{align}
\xymatrix{
(i-1) &  i \ar[l] \ar[dr]  & (i+1) \ar[l]  \ar[dr]  &  \ldots \ar[l] \ar[dr]  & (j-1) \ar[l] \ar[dr]  & j \ar[l] \ar@/^10pt/[rr]^{\delta_{j,j_\diamond}}  \ar[drr]_{1-\delta_{j,j_\diamond}}  &&  (j+1) \ar@/^10pt/[ll]_{1-\delta_{j,j_\diamond}}  \\
 & i' \ar[u] & (i+1)'  \ar[u] & \ldots \ar[u]& (j-1)'  \ar[u]  & j' \ar[u]  && (j+1)'},  \label{case 34} \\
\xymatrix{
(i-1)  &  i \ar[l] \ar[r]  & (i+1) \ar[r] \ar[d]   & \ldots \ar[r] \ar[d]  & (j-1) \ar[r] \ar[d]  &  j \ar[d] \ar@/_10pt/[rr]^{1-\delta_{j,j_\diamond}}  &&  (j+1) \ar@/_10pt/[ll]_{\delta_{j,j_\diamond}}   \\
 & i' \ar[u] & (i+1)'  & \ldots \ar[ul] & (j-1)'  \ar[ul] & j' \ar[ul]  && (j+1)' \ar[ull]^{1-\delta_{j,j_\diamond}}}.  \label{case 35}
\end{align}

If $\xi(i) = \xi(i+1)-1$, then there is a full subquiver of $Q_\xi$, which is one of the following quivers

\begin{align}
\xymatrix{
(i-1) \ar[r]  &  i \ar[r] \ar[d]  & (i+1) \ar[r] \ar[d]   & \ldots \ar[r] \ar[d]  & (j-1) \ar[r] \ar[d]  &  j \ar[d] \ar@/_10pt/[rr]^{1-\delta_{j,j_\diamond}}  &&  (j+1) \ar@/_10pt/[ll]_{\delta_{j,j_\diamond}}   \\
 & i' & (i+1)'  \ar[ul]  & \ldots \ar[ul] & (j-1)'  \ar[ul] & j' \ar[ul]  && (j+1)' \ar[ull]^{1-\delta_{j,j_\diamond}}},  \label{case 36} \\
\xymatrix{
(i-1) \ar[r]  &  i \ar[d]  & (i+1) \ar[l]  \ar[dr]  &  \ldots \ar[l] \ar[dr]  & (j-1) \ar[l] \ar[dr]  & j \ar[l] \ar@/^10pt/[rr]^{\delta_{j,j_\diamond}}  \ar[drr]_{1-\delta_{j,j_\diamond}}  &&  (j+1) \ar@/^10pt/[ll]_{1-\delta_{j,j_\diamond}}  \\
 & i' & (i+1)'  \ar[u] & \ldots \ar[u]& (j-1)'  \ar[u]  & j' \ar[u]  && (j+1)'}.  \label{case 37}
\end{align}

By our Definition \ref{HL and snake graph}, the labeled snake graph associated to $\iota(x[\alpha_{i,j}])$ is a zigzag labeled snake graph $\mathcal{G}=(G_i,G_{i+1},\ldots,G_j)$. Let $P_{-}$ (respectively, $P_{+}$) be the minimal (respectively, maximal) perfect matching of $\mathcal{G}$.  Define another labeled perfect matching $P$ of $\mathcal{G}$ as follows:
\begin{align*}
P = \begin{cases}
P_{+} & \text{for (\ref{case 34})}, \\
_W\mathcal{G} \cup e_1 \cup P_{-}|_{(\mathcal{G}\backslash G_i)} & \text{for (\ref{case 35})}, \\
P_{-} & \text{for (\ref{case 36})}, \\
_W\mathcal{G}  \cup e_1 \cup P_{+}|_{(\mathcal{G}\backslash G_i)}  & \text{for (\ref{case 37})}.
\end{cases}
\end{align*}
Then by Lemma \ref{Lemma36}, we have

\begin{align*}
F|_{\mathbb{P}}(y_i,y_{i+1},\ldots,y_j)  = \begin{cases}
\cfrac{1}{x_{i-1} x^{\delta_{j,j_\diamond}}_{j+1} {x'}^{(1-\delta_{j,j_\diamond})}_{j+1}}   &  \text{for (\ref{case 34})}, \\
\cfrac{1}{x_{i-1} x'_{i+1} x^{(1-\delta_{j,j_\diamond})}_{j+1}}   &  \text{for (\ref{case 35})},  \\
\cfrac{1}{x'_i x^{(1-\delta_{j,j_\diamond})}_{j+1}}  & \text{for (\ref{case 36})}, \\ 
\cfrac{1}{x'_i x^{\delta_{j,j_\diamond}}_{j+1} {x'}^{(1-\delta_{j,j_\diamond})}_{j+1}}   & \text{for (\ref{case 37})}.
\end{cases}
\end{align*}

By computation, we have 

\begin{align*}
\cfrac{x(P)}{x_i x_{i+1} \ldots x_j} & = \begin{cases}
\cfrac{1}{x_i} &  \text{for (\ref{case 34}) and (\ref{case 36})}, \\
\cfrac{1}{x_i x_{i+1}} & \text{for (\ref{case 35}) and (\ref{case 37})},
\end{cases} \\
\cfrac{y(P)}{F|_{\mathbb{P}}(y_i,y_{i+1},\ldots,y_j)} &  = \begin{cases}
x'_i x^{(1-\delta_{j,j_\diamond})}_{j+1}  & \text{for (\ref{case 34}) and (\ref{case 36})}, \\
x'_i x'_{i+1} x^{(1-\delta_{j,j_\diamond})}_{j+1}  & \text{for (\ref{case 35}) and (\ref{case 37})}.
\end{cases}
\end{align*}
Therefore

\begin{align*}
\cfrac{x(P) y(P)}{(x_i x_{i+1} \ldots x_j) F|_{\mathbb{P}}(y_i,y_{i+1},\ldots,y_j)}= 
\begin{cases}
\cfrac{x'_i x^{(1-\delta_{j,j_\diamond})}_{j+1}}{x_i} & \text{for (\ref{case 34}), (\ref{case 36})}, \\ 
\cfrac{x'_i x'_{i+1} x^{(1-\delta_{j,j_\diamond})}_{j+1}}{x_ix_{i+1}} & \text{for (\ref{case 35}), (\ref{case 37})}.
\end{cases}
\end{align*}

Finally, the uniqueness of $P$ follows from the uniqueness of the perfect matching with the height monomial $y(P)$, because the set of all perfect matchings in a zigzag snake graph forms a total order, and a weight monomial does not affect the associated height monomial.
\end{proof}

Denote by $\mathcal{G}=(G_i,G_{i+1},\ldots,G_j)$ the snake graph with sign function $(a_1,a_2,\ldots,a_l)$ associated to the HL-module parameterized by $\alpha_{i,j}$. We have the following observation: The vertices $(i+\ell_1-1), (i+\ell_2-1), \ldots, (i+\ell_{l-1}-1)$ are sources or sinks in $Q_\xi$. Hence by Definition \ref{HL and snake graph}, these tiles $G_{i+\ell_1-1}, G_{i+\ell_2-1},\ldots, G_{i+\ell_{l-1}-1}$ are sign-changed tiles.

Let 
\[
\mathcal{H}_t = (G_{i+\ell_{t-1}}, G_{i+\ell_{t-1}+1}, \ldots, G_{i+\ell_t-2}), \text{  $1 \leq t \leq l$}, 
\]
be all the (labeled) subsnake graphs in the decomposition of $\mathcal{G}$. Let $P_{-}$ (respectively, $P_{+}$) be the minimal (respectively, maximal) perfect matching of $\mathcal{G}$. Denote by $P_{-}|_{\mathcal{H}_t}$ (respectively, $P_{+}|_{\mathcal{H}_t}$) the minimal (respectively, maximal) perfect matching obtained by restricting  $P_{-}$ (respectively, $P_{+}$)  to $\mathcal{H}_t$, where  $1\leq t \leq l$.  Define another (labeled) perfect matching $P$ of $\mathcal{G}$ as follows. If $i$ is a source or sink in $Q_\xi$, we define
\begin{align*}
P|_{\mathcal{H}_1}= \begin{cases}
_W\mathcal{G} \cup e_1 \cup P_{-}|_{(\mathcal{H}_1\backslash G_i)}  &\text{if $i$ is a source}, \\
_W\mathcal{G}  \cup  e_1 \cup P_{+}|_{(\mathcal{H}_1\backslash G_i)} & \text{if $i$ is a sink},
\end{cases}
\end{align*}
and otherwise let
\begin{align*}
P|_{\mathcal{H}_1}= \begin{cases}
P_{-}|_{\mathcal{H}_1} &\text{if $i \to i'$ in $Q_\xi$},  \\
P_{+}|_{\mathcal{H}_1} & \text{if $i' \to i$ in $Q_\xi$}.
\end{cases}
\end{align*}

For $1< t \leq l$, let
\begin{align*}
P|_{\mathcal{H}_t}= \begin{cases}
P_{-}|_{\mathcal{H}_t} &  \text{if $(i+\ell_{t-1}) \to (i+\ell_{t-1})'$ in $Q_\xi$},  \\
P_{+}|_{\mathcal{H}_t} & \text{if $(i+\ell_{t-1})' \to (i+\ell_{t-1})$ in $Q_\xi$}.
\end{cases}
\end{align*} 
Define $P$ as the gluing of $P|_{\mathcal{H}_t}$  for $1\leq t \leq l$ and it is a perfect matching of $\mathcal{G}$.

\begin{definition}\label{definition311}
If $i$ is a source or sink in $Q_\xi$, then we define the revised height monomial 
\begin{align*}
\widetilde{y}(P|_{\mathcal{H}_1})=\begin{cases}
y_i  &  \text{if $i$ is a source}, \\
y(P|_{\mathcal{H}_1})   & \text{if $i$ is a sink and $l=1$}, \\
y_{i+\ell_1-1} y(P|_{\mathcal{H}_1})   & \text{if $i$ is a sink and $l>1$}. 
\end{cases}
\end{align*}
Otherwise, for $1\leq t \leq l$ we define the revised height monomial 
\begin{align*}
\widetilde{y}(P|_{\mathcal{H}_t})=\begin{cases}
1  &  \text{if $(i+\ell_{t-1}) \to (i+\ell_{t-1})'$ in $Q_\xi$},  \\  
y(P|_{\mathcal{H}_t})  &   \text{if $(i+\ell_{t-1})' \to (i+\ell_{t-1})$ in $Q_\xi$ and $t=l$}, \\
y_{i+\ell_t-1} y(P|_{\mathcal{H}_t})   &  \text{if $(i+\ell_{t-1})' \to (i+\ell_{t-1})$ in $Q_\xi$  and $t\neq l$}.
\end{cases}
\end{align*}
\end{definition}

We immediately have the following lemma.

\begin{lemma}\label{lemma312}
With the notation above, there exists a unique perfect matching $P\in {\rm Match}(\mathcal{G})$ such that
\begin{align*}
\cfrac{x(P) y(P)}{(\prod\limits_{s=i}^j x_s) F|_{\mathbb{P}}(y_i,y_{i+1},\ldots,y_j)} = \begin{cases}
\cfrac{x'_i x'_{i+1} x'_{i+\ell_1} x'_{i+\ell_2}\ldots x'_{i+\ell_{l-1}} x_{j+1}^{(1-\delta_{j,j_\diamond})}}{x_i x_{i+1} x_{i+\ell_1}x_{i+\ell_2}\ldots x_{i+\ell_{l-1}}}  & \text{if $i$ is a source or sink}, \\   \\
\cfrac{x'_i x'_{i+\ell_1} x'_{i+\ell_2}\ldots x'_{i+\ell_{l-1}} x_{j+1}^{(1-\delta_{j,j_\diamond})}}{x_i x_{i+\ell_1}x_{i+\ell_2}\ldots x_{i+\ell_{l-1}}}  & \text{otherwise}. 
\end{cases}
\end{align*}
\end{lemma}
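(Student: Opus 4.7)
The plan is to reduce the statement to the zigzag situation already handled by Lemma~\ref{the lemma310}, by cutting $\mathcal{G}$ along its sign-changed tiles. By the observation preceding Definition~\ref{definition311}, the sign-changed tiles of $\mathcal{G}$ are exactly $G_{i+\ell_1-1}, G_{i+\ell_2-1}, \ldots, G_{i+\ell_{l-1}-1}$, which correspond to the source/sink vertices of $Q_\xi$ strictly between $i$ and $j$. Removing them decomposes $\mathcal{G}$ into the zigzag subsnake graphs $\mathcal{H}_1, \ldots, \mathcal{H}_l$, each of which spans an interval of $Q_\xi$ with no source or sink in its interior. Lemma~\ref{the lemma310} therefore applies to each $\mathcal{H}_t$ separately and yields a unique matching of $\mathcal{H}_t$ whose $x/y$-ratio takes one of the two prescribed forms.

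Next I would verify that the matching $P$ obtained by gluing the pieces $P|_{\mathcal{H}_t}$ (as constructed right before Definition~\ref{definition311}) does realise the claimed identity. The weight monomial factors as $x(P) = \prod_t x(P|_{\mathcal{H}_t})$ up to correction at the interface tiles, and the height monomial factors analogously once one replaces $y(P|_{\mathcal{H}_t})$ by the revised height monomial $\widetilde{y}(P|_{\mathcal{H}_t})$ of Definition~\ref{definition311}, whose purpose is precisely to absorb the boundary edge of the adjacent sign-changed tile. A local check at each $G_{i+\ell_t-1}$ shows that the factor coming from the shared edge of $\mathcal{H}_t$ and $\mathcal{H}_{t+1}$ contributes the missing $x'_{i+\ell_t-1}/x_{i+\ell_t-1}$ to the numerator of the right-hand side. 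On the denominator side, Lemma~\ref{Lemma36} gives $F|_{\mathbb{P}}(y_i,\ldots,y_j) = \prod_\ell y_\ell|_{\mathbb{P}}$, which factors across the pieces so that the tropical evaluations assemble into the shape predicted by the lemma.

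For uniqueness, the set of perfect matchings of a zigzag snake graph is linearly ordered (as already invoked at the end of the proof of Lemma~\ref{the lemma310}), and on a zigzag piece the height monomial determines the matching uniquely. Any matching $P'$ of $\mathcal{G}$ achieving the stated monomial ratio must, after restriction to each $\mathcal{H}_t$, coincide with the matching provided by Lemma~\ref{the lemma310}; the behaviour at each sign-changed tile is equally forced by comparing the right-hand side with the gluing rule of Definition~\ref{definition311}. Hence $P' = P$.

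The main obstacle will be the careful bookkeeping at the sign-changed tiles: depending on which of the four local quiver configurations of Lemma~\ref{the lemma310} governs each $\mathcal{H}_t$, the matching $P|_{\mathcal{H}_t}$ is either $P_{-}|_{\mathcal{H}_t}$ or $P_{+}|_{\mathcal{H}_t}$, and the two choices must mesh coherently across every interface tile $G_{i+\ell_t-1}$ and also at the two endpoints (where the special ``source/sink at $i$'' case and the factor $x_{j+1}^{(1-\delta_{j,j_\diamond})}$ enter). Checking all possible combinations of adjacent orientations and confirming that they assemble into the single uniform formula stated above is the only nontrivial computation; everything else is a direct application of Lemmas~\ref{Lemma36} and~\ref{the lemma310}.
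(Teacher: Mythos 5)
Your proposal follows essentially the same route as the paper: decompose $\mathcal{G}$ along the sign-changed tiles into the zigzag pieces $\mathcal{H}_t$, apply Lemma \ref{the lemma310} piecewise (the paper groups each sign-changed tile $G_{i+\ell_t-1}$ with the preceding piece, so that the revised height monomials $\widetilde{y}(P|_{\mathcal{H}_t})$ make each pair $(x(P|_{\mathcal{H}_t}),\widetilde{y}(P|_{\mathcal{H}_t}))$ match the hypotheses of that lemma), factor $F|_{\mathbb{P}}$ via Lemma \ref{Lemma36}, and deduce uniqueness from the uniqueness of the matching on each zigzag piece. The only caveat is the phrasing ``shared edge of $\mathcal{H}_t$ and $\mathcal{H}_{t+1}$'' --- these pieces are separated by the sign-changed tile rather than sharing an edge, and the factor $x'_{i+\ell_t-1}/x_{i+\ell_t-1}$ arises from applying Lemma \ref{the lemma310} to the interval ending at that source/sink vertex --- but this does not affect the correctness of the argument.
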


\begin{proof} 
By the definition of $P$, if $i$ is a source or sink in $Q_\xi$, then we have 
\begin{align*}
& x(P|_{\mathcal{H}_1})=\begin{cases}
\prod\limits_{s=2}^{\ell_1-2} x_{i+s} & \text{if $l=1$}, \\
\prod\limits_{s=2}^{\ell_1-1} x_{i+s} & \text{if $l>1$}, 
\end{cases} \\ 
& \widetilde{y}(P|_{\mathcal{H}_1})=\begin{cases}
y_i  &  \text{if $i$ is a source}, \\
\prod\limits_{s=1}^{\ell_1-2}  y_{i+s}  & \text{if $i$ is a sink and $l=1$}, \\
\prod\limits_{s=1}^{\ell_1-1}  y_{i+s} & \text{if $i$ is a sink and $l>1$},
\end{cases}
\end{align*}
and otherwise for $1\leq t \leq l$
\begin{align*}
& x(P|_{\mathcal{H}_t})= \begin{cases}
\prod\limits_{s=\ell_{t-1}+1}^{\ell_t-2} x_{i+s}  & \text{if $t = l$}, \\
\prod\limits_{s=\ell_{t-1}+1}^{\ell_t-1} x_{i+s}   & \text{if $t < l$},
\end{cases} \\
& \widetilde{y}(P|_{\mathcal{H}_t})=\begin{cases}
1  &  \text{if $(i+\ell_{t-1}) \to (i+\ell_{t-1})'$ in $Q_\xi$},  \\  
\prod\limits_{s=\ell_{t-1}}^{\ell_t-2} y_{i+s}  &   \text{if $(i+\ell_{t-1})' \to (i+\ell_{t-1})$ in $Q_\xi$ and $t=l$}, \\
\prod\limits_{s=\ell_{t-1}}^{\ell_t-1} y_{i+s}   &  \text{if $(i+\ell_{t-1})' \to (i+\ell_{t-1})$ in $Q_\xi$  and $t\neq l$}.
\end{cases}
\end{align*}

By Lemma \ref{Lemma36}, we have 
\begin{align*} 
F|_{\mathbb{P}}(y_i,y_{i+1},\ldots,y_j)=y(P_{+})|_{\mathbb{P}} = \left(\prod_{t=1}^{l-1} (\prod_{s=\ell_{t-1}}^{\ell_t-1} y_{i+s})|_{\mathbb{P}}\right) (\prod_{s=\ell_{l-1}}^{\ell_l-2} y_{i+s})|_{\mathbb{P}}.
\end{align*}

We consider zigzag snake graphs 
\[
(G_{i+\ell_{t-1}}, G_{i+\ell_{t-1}+1}, \ldots, G_{i+\ell_t-1})=\mathcal{H}_t \cup G_{i+\ell_t-1}
\]
for $1\leq t \leq l-1$ and 
\[
(G_{i+\ell_{l-1}}, G_{i+\ell_{l-1}+1}, \ldots, G_{i+\ell_l-2})=\mathcal{H}_l.
\] 
By checking case by case, for $1\leq t \leq l-1$, $x(P|_{\mathcal{H}_t})$ is just the weight monomial on $\mathcal{H}_t \cup G_{i+\ell_t-1}$ with the height monomial $\widetilde{y}(P|_{\mathcal{H}_j})$, and $x(P|_{\mathcal{H}_l})$ is the weight monomial on $\mathcal{H}_l$ with the height monomial $\widetilde{y}(P|_{\mathcal{H}_l})$.

Since these vertices $(i+\ell_1-1), (i+\ell_2-1), \ldots, (i+\ell_{l-1}-1)$ are sources or sinks in $Q_\xi$, by Lemma \ref{the lemma310}, we have

\begin{multline*}
\cfrac{x(P) y(P)}{(\prod\limits_{s=1}^j x_s) F|_{\mathbb{P}}(y_i,y_{i+1},\ldots,y_j)} = \left(\prod_{t=1}^{l-1} \cfrac{x(P|_{\mathcal{H}_t}) \widetilde{y}(P|_{\mathcal{H}_t})}{(\prod\limits_{s=\ell_{t-1}}^{\ell_t-1} x_{i+s})(\prod\limits_{s=\ell_{t-1}}^{\ell_t-1} y_{i+s})|_{\mathbb{P}}} \right) \cfrac{x(P|_{\mathcal{H}_l}) \widetilde{y}(P|_{\mathcal{H}_l})}{(\prod\limits_{s=\ell_{l-1}}^{\ell_l-2} x_{i+s})(\prod\limits_{s=\ell_{l-1}}^{\ell_l-2} y_{i+s})|_{\mathbb{P}}} \\
= \begin{cases}
\cfrac{x'_i x'_{i+1}}{x_i x_{i+1}}  \left(\prod\limits_{t=2}^{l-1} \cfrac{x'_{i+\ell_{t-1}}}{x_{i+\ell_{t-1}}} \right) \cfrac{x'_{i+\ell_{l-1}} x_{j+1}^{(1-\delta_{j,j_\diamond})}}{x_{i+\ell_{l-1}}} & \text{if $i$ is a source or sink}, \\  
\left(\prod\limits_{j=1}^{l-1} \cfrac{x'_{i+\ell_{j-1}}}{x_{i+\ell_{j-1}}} \right) \cfrac{x'_{i+\ell_{l-1}} x_{j+1}^{(1-\delta_{j,j_\diamond})}}{x_{i+\ell_{l-1}}}  & \text{otherwise}.
\end{cases}
\end{multline*}

Finally, the uniqueness of $P$ follows from the uniqueness of the perfect matching of each $\mathcal{H}_t$ for $1\leq t \leq l$.
\end{proof}

By Theorem \ref{BCTheorem1} and Theorem \ref{HL defintion equivalent}, assume without loss of generality that  
\[
\iota(x[\alpha_{i,j}])=[L(Y_{i_1,a_1}Y_{i_2,a_2}\ldots Y_{i_k,a_k})]
\] 
for $i=i_1 < i_2 < \cdots < i_k$. Denote by $\mathcal{G}=(G_i,G_{i+1},\ldots,G_j)$ the snake graph with sign function $(a_1,a_2,\ldots,a_l)$ associated to the HL-module $L(Y_{i_1,a_1}Y_{i_2,a_2}\ldots Y_{i_k,a_k})$. Then by Lemma \ref{lemma312}, either $l=k$ or $|l-k|=1$, and $l=k$ if and only if $i$ is not a source or sink and $j$ is a source or sink.

\begin{theorem}\label{theorem313}
With the notation above, the highest or lowest $\ell$-weight monomial in the $q$-character of an arbitrary HL-module $L(Y_{i_1,a_1}Y_{i_2,a_2}\ldots Y_{i_k,a_k})$ occurs in
\begin{align*}
\begin{cases}
\cfrac{\chi_q(\iota(x'_i x'_{i+1} x'_{i+\ell_1} x'_{i+\ell_2}\ldots x'_{i+\ell_{l-1}} x_{j+1}^{(1-\delta_{j,j_\diamond})}))}{\chi_q(\iota(x_i x_{i+1} x_{i+\ell_1}x_{i+\ell_2}\ldots x_{i+\ell_{l-1}}))}  & \text{if $i$ is a source or sink}, \\   \\
\cfrac{\chi_q(\iota(x'_i x'_{i+\ell_1} x'_{i+\ell_2}\ldots x'_{i+\ell_{l-1}} x_{j+1}^{(1-\delta_{j,j_\diamond})}))}{\chi_q(\iota(x_i x_{i+\ell_1}x_{i+\ell_2}\ldots x_{i+\ell_{l-1}}))}  & \text{otherwise}. 
\end{cases}
\end{align*}
\end{theorem}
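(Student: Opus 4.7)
The plan is to combine Theorem \ref{snake graph formula} with Lemma \ref{lemma312} and then read off the leading $\ell$-weight monomial using the Brito--Chari identification in Theorem \ref{BCTheorem1}. Write
\[
\iota(x[\alpha_{i,j}]) = [L(\omega(i,j))], \qquad \omega(i,j)=Y_{i_1,a_1}Y_{i_2,a_2}\cdots Y_{i_k,a_k},
\]
so that the highest $\ell$-weight monomial of $\chi_q(\iota(x[\alpha_{i,j}]))$ is $\omega(i,j)$, and the lowest $\ell$-weight monomial is obtained from $\omega(i,j)$ by the standard Frenkel--Reshetikhin involution. By Theorem \ref{snake graph formula},
\[
\chi_q(\iota(x[\alpha_{i,j}])) \;=\; \frac{1}{\prod_{\ell=i}^{j}\chi_q(\iota(x_\ell))}\,\sum_{P\in \mathrm{Match}(\mathcal{G})} \chi_q\!\left(\iota\!\left(\frac{x(P)\,y(P)}{y(P_+)|_{\mathbb{P}}}\right)\right),
\]
where the identification $\bigoplus_P y(P)=y(P_+)|_{\mathbb{P}}$ is exactly Lemma \ref{Lemma36}. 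By Lemma \ref{lemma312}, there is a unique perfect matching $P\in \mathrm{Match}(\mathcal{G})$ whose contribution to this sum is the rational expression appearing in the theorem. So the theorem reduces to showing that $\omega(i,j)$ is the highest $\ell$-weight monomial appearing in the $\chi_q$ of that single term.

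Next I would compute the highest $\ell$-weight monomial of the displayed rational expression directly from Theorem \ref{BCTheorem1}. Since $\iota(x_s)=[L(Y_{s,\xi(s+1)})]$ and $\iota(x'_s)=[L(Y_{s,\xi(s)-1}Y_{s,\xi(s)+1})]$, the highest $\ell$-weight monomials of $\chi_q(\iota(x_s))$ and $\chi_q(\iota(x'_s))$ are $Y_{s,\xi(s+1)}$ and $Y_{s,\xi(s)-1}Y_{s,\xi(s)+1}$ respectively, and products of $q$-characters of simple modules have as highest monomial the product of the highest monomials. Expanding the displayed ratio as a Laurent polynomial in the $Y_{s,r}$'s and taking the leading monomial, the numerator contributes $\prod_t Y_{s_t,\xi(s_t)-1}Y_{s_t,\xi(s_t)+1}$ with $s_t$ running over the indices $\{i, i+1, i+\ell_1, \dots, i+\ell_{l-1}\}$ (or $\{i, i+\ell_1,\dots,i+\ell_{l-1}\}$ in the other case) together with the contribution of $x_{j+1}^{1-\delta_{j,j_\diamond}}$, while the denominator removes $\prod_t Y_{s_t,\xi(s_t+1)}$. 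The main calculation is the cancellation: by construction of $\mathcal{G}$ in Definition \ref{HL and snake graph}, the decomposition points $i+\ell_1-1,\dots,i+\ell_{l-1}-1$ are precisely the sources/sinks of $Q_\xi$ strictly between $i$ and $j$, which by Theorem \ref{HL defintion equivalent} are exactly the interior indices $i_2,\dots,i_{k-1}$ of $\omega(i,j)$; for each such vertex the two $Y$-factors coming from $x'_{s_t}$ combine with the cancellation coming from $x_{s_t}$ in the denominator in precisely the way dictated by conditions~(ii) and~(iii) of Theorem \ref{HL defintion equivalent}, producing the single factor $Y_{i_t,a_t}$. The boundary contributions at $i$ (source/sink vs.\ neither) and at $j$ (with the $\delta_{j,j_\diamond}$ correction) match the factors $Y_{i_1,a_1}$ and $Y_{i_k,a_k}$ in $\omega(i,j)$.

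To argue the converse, namely that no other perfect matching $P'\ne P$ contributes the monomial $\omega(i,j)$, I would use the uniqueness statement in Lemma \ref{lemma312}: the data $(x(P),y(P))$ characterize $P$ among matchings of $\mathcal{G}$, and every other perfect matching produces a rational expression in $\{x_s,x'_s\}$ in which at least one $x'_s$ is replaced by a tile-turning correction that strictly lowers the corresponding $\ell$-weight. Passing this monotonicity through the $\chi_q\circ \iota$ map (which sends cluster monomials to classes of real simple modules by Theorem \ref{BCTheorem1}) yields that the highest $\ell$-weight monomial of every other summand is strictly dominated by $\omega(i,j)$ in the Nakajima partial order on $\mathcal{P}^+$. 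Consequently $\omega(i,j)$ appears in, and only in, the term indexed by $P$. The statement for the lowest $\ell$-weight monomial follows by the symmetric argument, applied to the Frenkel--Reshetikhin dual, or equivalently by reflecting the snake graph via \cite[Proposition 3.1]{CS20} and repeating the proof with $P_+$ replaced by $P_-$.

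The main obstacle I anticipate is the bookkeeping in the monomial cancellation step: one has to verify carefully that the indices $s_t=i+\ell_{t-1}$ of the sources/sinks along $Q_\xi$ align with the indices $i_t$ in $\omega(i,j)$, and that the shifts $\xi(s_t)\pm 1$, $\xi(s_t+1)$ combine to yield the correct spectral parameters $a_t$. The four different orientation patterns at the boundary vertex $i$ (source, sink, or neither with arrow pointing in or out to the frozen row) and the two cases $j=j_\diamond$ versus $j\neq j_\diamond$ each have to be checked separately, but in each case the height-function identities in Theorem \ref{HL defintion equivalent} make the verification mechanical.
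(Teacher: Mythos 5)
Your proposal is correct and follows essentially the same route as the paper: apply Theorem \ref{snake graph formula} to reduce to the single term singled out by Lemma \ref{lemma312}, identify the factors via Theorem \ref{BCTheorem1}, and compare the (unique) highest and lowest $\ell$-weight monomials on both sides, the paper handling the lowest one via \cite[Corollary 6.9]{FM01} where you invoke the Frenkel--Reshetikhin involution. Your extra paragraph arguing that no other matching contributes $\omega(i,j)$ is not needed for the stated ``occurs in'' conclusion (and is the least rigorous part of the write-up), but it does not affect the validity of the rest.
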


\begin{proof}
By Theorem \ref{snake graph formula}, we have 
\begin{align}\label{weight monomial}
\chi_q(\prod_{\ell=i}^j \iota(x_\ell))  \chi_q(\iota(x[\alpha_{i,j}])) = \chi_q\left(\iota(\frac{\sum_{P\in {\rm Match}(\mathcal{G})}  x(P) y(P) }{\bigoplus_{P\in {\rm Match}(\mathcal{G})} y(P)})\right).
\end{align}

By \cite[Corollary 6.9]{FM01}, for an arbitrary simple $U_q(\widehat{\mathfrak{g}})$-module $L(m)$, the lowest $\ell$-weight monomial in $\chi_q(L(m))$ is the product of the lowest $\ell$-weight monomials of fundamental modules whose highest $\ell$-weight monomials are factors of $m$, and the highest or lowest $\ell$-weight monomial is unique. Hence the lowest $\ell$-weight monomial in both sides of Equation (\ref{weight monomial}) should be same. By Lemma \ref{lemma312}, there exists a unique perfect matching $P$ of the snake graph associated to the HL-module $L(Y_{i_1,a_1}Y_{i_2,a_2}\ldots Y_{i_k,a_k})$ such that  
\begin{align*}
\cfrac{x(P) y(P)}{F|_{\mathbb{P}}(y_i,y_{i+1},\ldots,y_j)} = \begin{cases}
\cfrac{x'_i x'_{i+1} x'_{i+\ell_1} x'_{i+\ell_2}\ldots x'_{i+\ell_{l-1}} x_{j+1}^{(1-\delta_{j,j_\diamond})}}{x_i x_{i+1} x_{i+\ell_1}x_{i+\ell_2}\ldots x_{i+\ell_{l-1}}} (\prod\limits_{s=i}^j x_s)  & \text{if $i$ is a source or sink}, \\   \\
\cfrac{x'_i x'_{i+\ell_1} x'_{i+\ell_2}\ldots x'_{i+\ell_{l-1}} x_{j+1}^{(1-\delta_{j,j_\diamond})}}{x_i x_{i+\ell_1}x_{i+\ell_2}\ldots x_{i+\ell_{l-1}}} (\prod\limits_{s=i}^j x_s)  & \text{otherwise}. 
\end{cases}
\end{align*}

By Theorem \ref{BCTheorem1}, for any $1\leq \ell \leq k$, 
\[
\iota(x_{i_\ell}) = [L(Y_{i_\ell,\xi(i_\ell+1)})], \quad \iota(x'_{i_\ell}) = [L(Y_{i_\ell,\xi(i_\ell)-1}Y_{i_\ell,\xi(i_\ell)+1})].
\]
Comparing the highest $\ell$-weight monomial and the lowest $\ell$-weight monomial in both sides of Equation (\ref{weight monomial}), and by their uniqueness property, our result holds.
\end{proof}

\begin{example}
Continue our previous Example \ref{exmaple39}, the minimal perfect matching $P_{-}$ of $\mathcal{G}$ and each $P_{-}|_{\mathcal{H}_t}$ ($t=1,2,3$) are shown in the left and right hand side of the following figure respectively. 
\begin{figure}[H]
\resizebox{.5\width}{.5\height}{
\begin{minipage}{0.5\linewidth}
\begin{tikzpicture}
\draw (0,0) rectangle (1,1);
\draw (1,0) rectangle (2,1);
\draw (2,0) rectangle(3,1);
\draw (2,1) rectangle(3,2);
\draw (3,1) rectangle(4,2);
\draw (4,1) rectangle(5,2);
\draw (4,2) rectangle(5,3);
\node at (0.5,0.5) {1};
\node at (1.5,0.5) {2};
\node at (2.5,0.5) {3};
\node at (2.5,1.5) {4};
\node at (3.5,1.5) {5};
\node at (4.5,1.5) {6};
\node at (4.5,2.5) {7};
\node[above] at (0.5,0.9) {2};
\node[above] at (1.5,0.9) {3};
\node[below] at (1.5,0.1) {1};
\node[below] at (2.5,0.1) {2};
\node[left] at (2.1,1.5) {3};
\node[right] at (2.9,0.5) {4};
\node[below] at (3.5,1.1) {4};
\node[above] at (2.5,1.9) {5};
\node[below] at (4.5,1.1) {5};
\node[above] at (3.5,1.9) {6};
\node[left] at (4.1,2.5) {6};
\node[right] at (4.9,1.5) {7};
\draw[red,ultra thick] (0,0) -- (0,1);
\draw[red,ultra thick] (1,0) -- (2,0);
\draw[red,ultra thick] (1,1) -- (2,1);
\draw[red,ultra thick] (3,0) -- (3,1);
\draw[red,ultra thick] (2,2) -- (3,2);
\draw[red,ultra thick] (4,1) -- (5,1);
\draw[red,ultra thick] (4,2) -- (4,3);
\draw[red,ultra thick] (5,2) -- (5,3);
\end{tikzpicture}
\end{minipage}
\begin{minipage}{0.5\linewidth}
\begin{tikzpicture}
\draw (0,0) rectangle (1,1);
\draw (2,0) rectangle(3,1);
\draw (2,1) rectangle(3,2);
\draw (4,1) rectangle(5,2);
\draw (4,2) rectangle(5,3);
\node at (0.5,0.5) {1};
\node at (2.5,0.5) {3};
\node at (2.5,1.5) {4};
\node at (4.5,1.5) {6};
\node at (4.5,2.5) {7};
\node[above] at (0.5,0.9) {2};
\node[below] at (2.5,0.1) {2};
\node[left] at (2.1,1.5) {3};
\node[right] at (2.9,0.5) {4};
\node[above] at (2.5,1.9) {5};
\node[below] at (4.5,1.1) {5};
\node[left] at (4.1,2.5) {6};
\node[right] at (4.9,1.5) {7};
\draw[red,ultra thick] (0,0) -- (0,1);
\draw[red,ultra thick] (1,0) -- (1,1);
\draw[red,ultra thick] (2,0) -- (2,1);
\draw[red,ultra thick] (3,0) -- (3,1);
\draw[red,ultra thick] (2,2) -- (3,2);
\draw[red,ultra thick] (4,1) -- (5,1);
\draw[red,ultra thick] (4,2) -- (4,3);
\draw[red,ultra thick] (5,2) -- (5,3);
\end{tikzpicture}
\end{minipage}}
\end{figure}
By definition, the required perfect matching $P$ of $\mathcal{G}$ is  
\begin{figure}[H]
\resizebox{.5\width}{.5\height}{
\begin{minipage}{0.4\linewidth}
\begin{tikzpicture}
\draw (0,0) rectangle (1,1);
\draw (1,0) rectangle (2,1);
\draw (2,0) rectangle(3,1);
\draw (2,1) rectangle(3,2);
\draw (3,1) rectangle(4,2);
\draw (4,1) rectangle(5,2);
\draw (4,2) rectangle(5,3);
\node at (0.5,0.5) {1};
\node at (1.5,0.5) {2};
\node at (2.5,0.5) {3};
\node at (2.5,1.5) {4};
\node at (3.5,1.5) {5};
\node at (4.5,1.5) {6};
\node at (4.5,2.5) {7};
\node[above] at (0.5,0.9) {2};
\node[above] at (1.5,0.9) {3};
\node[below] at (1.5,0.1) {1};
\node[below] at (2.5,0.1) {2};
\node[left] at (2.1,1.5) {3};
\node[right] at (2.9,0.5) {4};
\node[below] at (3.5,1.1) {4};
\node[above] at (2.5,1.9) {5};
\node[below] at (4.5,1.1) {5};
\node[above] at (3.5,1.9) {6};
\node[left] at (4.1,2.5) {6};
\node[right] at (4.9,1.5) {7};
\draw[red,ultra thick] (0,0) -- (1,0);
\draw[red,ultra thick] (0,1) -- (1,1);
\draw[red,ultra thick] (2,0) -- (2,1);
\draw[red,ultra thick] (3,0) -- (3,1);
\draw[red,ultra thick] (2,2) -- (3,2);
\draw[red,ultra thick] (4,1) -- (4,2);
\draw[red,ultra thick] (5,1) -- (5,2);
\draw[red,ultra thick] (4,3) -- (5,3);
\end{tikzpicture}
\end{minipage}}
\end{figure}

By the definition of $x(P|_{\mathcal{H}_t})$ for $t=1,2,3$ and Definition \ref{definition311}, we have 
\begin{align*}
x(P|_{\mathcal{H}_1}) = x_2, \quad x(P|_{\mathcal{H}_2}) = x_4x_5, \quad x(P|_{\mathcal{H}_3}) = x_7, \\
\widetilde{y}(P|_{\mathcal{H}_1}) = y_1y_2, \quad \widetilde{y}(P|_{\mathcal{H}_2}) = 1, \quad \widetilde{y}(P|_{\mathcal{H}_3}) = y_6y_7.  
\end{align*}
Using Lemma \ref{lemma312}, we have 
\begin{align*}
\frac{x(P) y(P)}{(\prod\limits_{s=1}^7 x_s) F|_{\mathbb{P}}(y_1,y_2,\ldots,y_7)} & = (\frac{x_2 y_1y_2 }{x_1x_2 (y_1y_2)|_{\mathbb{P}}})(\frac{x_4 x_5}{x_3x_4x_5 (y_3y_4y_5)|_{\mathbb{P}}})(\frac{x_7 y_6y_7}{x_6x_7 (y_6y_7)|_{\mathbb{P}}}) \\
& = \frac{x'_1 x'_3 x'_6x_8}{x_1x_3x_6}. 
\end{align*}

Therefore by Theorem \ref{theorem313}, the highest $\ell$-weight monomial and the lowest $\ell$-weight monomial in the $q$-character of $L(Y_{1,-3}Y_{3,-7}Y_{6,-2}Y_{8,-6})$ occur in 
\begin{align*}
\frac{\chi_q(\iota(x'_1 x'_3 x'_6x_8))}{\chi_q(\iota(x_1x_3x_6))}=\frac{\chi_q(L(Y_{1,-5}Y_{1,-3}) L(Y_{3,-7}Y_{3,-5})   L(Y_{6,-4}Y_{6,-2})  L(Y_{8,-6}))}{\chi_q(L(Y_{1,-5}) L(Y_{3,-5}) L(Y_{6,-4}))},
\end{align*} 
and they are 
\begin{align*}
& \frac{(Y_{1,-5}Y_{1,-3})(Y_{3,-7}Y_{3,-5}) (Y_{6,-4}Y_{6,-2})  (Y_{8,-6}) }{ (Y_{1,-5}) (Y_{3,-5}) (Y_{6,-4}) }=Y_{1,-3}Y_{3,-7}Y_{6,-2}Y_{8,-6}, \\
& \frac{(Y^{-1}_{9,5}Y^{-1}_{9,7}) (Y^{-1}_{7,3}Y^{-1}_{7,5})  (Y^{-1}_{4,6}Y^{-1}_{4,8}) (Y^{-1}_{2,4}) }{Y^{-1}_{9,5}Y^{-1}_{7,5}Y^{-1}_{4,6}} = Y^{-1}_{2,4} Y^{-1}_{4,8} Y^{-1}_{7,3}  Y^{-1}_{9,7}, 
\end{align*}
respectively.
\end{example}

\section{A new recursion for $q$-characters of Hernandez-Leclerc modules}\label{a recursive formula}

In this section, we give a recursive formula for HL-modules by an induction on the length of the highest $\ell$-weight monomials of HL-modules.

Let $d_j=\delta_{j,j_\diamond}$ for $j\in I$. We have the following theorem.
\begin{theorem}\label{theorem41}
Let $j\in I$ be a source or sink and $j>i \in I$. Then 
\begin{multline} \label{equation41}
x[\alpha_{i,j}] x[\alpha_{j+1,j+1}] =  x[\alpha_{i,j+1}] \\
+ x[\alpha_{i,\max\{i-1,j_\bullet-1\}}]^{1-\delta_{i,j_\bullet}} {x'}_{\max\{i, j_\bullet\}}^{ \min\{1,(1-\delta_{j_\bullet,i_\bullet})d_{j_\bullet-1}+\delta_{j_\bullet,i}\}} x[-\alpha_{j+2}]^{d_{j+1}} {x'}^{1-d_{j+1}}_{j+2}.
\end{multline}
\end{theorem}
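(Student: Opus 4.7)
The plan is to realize (\ref{equation41}) as the exchange relation at a distinguished vertex of a seed obtained from $({\bf \widetilde{x}},Q_\xi)$ by an explicit sequence of mutations. Starting from the initial seed, I would apply the Brito--Chari mutation sequence $\mu_i,\mu_{i+1},\ldots,\mu_j$ (in this order) to place $x[\alpha_{i,j}]$ at vertex $j$, and then apply the additional mutation $\mu_{j+1}$ (the ``one extra step of mutation'' highlighted in Remark \ref{note of our formula}(2)) to reach a seed $\Sigma$ whose cluster contains both $x[\alpha_{i,j}]$ at vertex $j$ and $x[\alpha_{i,j+1}]$ at vertex $j+1$. The identity (\ref{equation41}) will then be extracted from the exchange relation at vertex $j$ of $\Sigma$: I would show that the new cluster variable produced by mutating $\Sigma$ at $j$ equals $x[\alpha_{j+1,j+1}]$, so that the resulting relation $x[\alpha_{i,j}]\cdot x[\alpha_{j+1,j+1}]=\prod_{\text{in}}+\prod_{\text{out}}$ yields the stated identity once the two products at $j$ are computed.

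The key technical input is a precise description of the quiver of $\Sigma$ around vertex $j$. The source/sink hypothesis on $j$ pins down the local structure of $Q_\xi$ near $j$, $j+1$, and their frozen neighbors; tracking how each of the mutations $\mu_i,\ldots,\mu_j,\mu_{j+1}$ modifies this structure gives an inductive description of the arrows at $j$ in $\Sigma$. The incoming arrows at $j$ should contribute the term $x[\alpha_{i,j+1}]$ coming from the adjacent vertex $j+1$, while the outgoing arrows should produce the monomial in cluster and frozen variables appearing on the right-hand side of (\ref{equation41}). Each of the indicators $\delta_{i,j_\bullet}$, $\delta_{j_\bullet,i_\bullet}$, $d_{j+1}$, $d_{j_\bullet-1}$ in that monomial should correspond to whether a specific arrow is present: for instance, $\delta_{i,j_\bullet}$ controls whether the mutable vertex carrying the factor $x[\alpha_{i,j_\bullet-1}]$ is available, and $d_{j+1}$ toggles between an arrow to the initial variable $x_{j+2}$ and its frozen shadow $x'_{j+2}$.

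The principal obstacle is the case analysis required in the arrow-tracking. There are essentially four cases, determined by whether $j_\bullet<i$, $j_\bullet=i$, $j_\bullet>i$ with $j_\bullet=i_\bullet$, or $j_\bullet>i$ with $j_\bullet\neq i_\bullet$, and each produces a slightly different picture of both $Q_\xi$ and the mutated quiver of $\Sigma$. The source/sink hypothesis on $j$ is essential here: it ensures that the final pair of mutations $\mu_j\mu_{j+1}$ acts uniformly regardless of the value of $d_{j+1}$, so that a single compact formula captures every case. I would organize the verification by first establishing the ``generic'' case $j_\bullet>i$, $j_\bullet\neq i_\bullet$, where all indicator exponents are nonzero, and then recovering the remaining boundary cases by specializing the Kronecker symbols and checking that the degenerated arrows vanish in the expected way.
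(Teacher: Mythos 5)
Your proposal follows essentially the same route as the paper: mutate along $i,i+1,\ldots,j,j+1$, read off the exchange relation at vertex $j$, and identify the new variable there as $x[\alpha_{j+1,j+1}]$ via the denominator description of Theorem \ref{cluster variables bijection with almost positive roots}. The only difference is that the paper short-circuits most of your arrow-tracking case analysis by quoting Brito--Chari's Lemma in Section 2.2 for the local quiver after the mutations $\mu_i,\ldots,\mu_{j-1}$, and then handles the reversed-orientation case by symmetry.
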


\begin{proof}
By \cite[Lemma in Section 2.2]{BC19}, if there is an arrow $(j-1) \to j$ in $Q_\xi$, then after mutating the sequence $i,(i+1),\ldots,(j-1)$ we obtain the following arrows at vertices $j$ and $(j+1)$:
\begin{align*}
\xymatrix{
\max\{i-1,j_\bullet-1\}  \ar@/^20pt/[rr]^{a_j}  & (j-1)  & j \ar[l]  \ar[d]^{d_{j-1}}  &  (j+1) \ar[l]  \ar@/^10pt/[rr]^{d_{j+1}} \ar[drr]_{1-d_{j+1}} &&  (j+2) \ar@/^10pt/[ll]_{1-d_{j+1}}     \\
 & \max\{i,j_\bullet\}' \ar[ur]_{b_j}   & j'  & (j+1)' \ar[u]  && (j+2)' },
\end{align*}
where $a_j=1-\delta_{i,j_\bullet}$ and $b_j=\min\{1,(1-\delta_{j_\bullet,i_\bullet})d_{j_\bullet-1}+\delta_{j_\bullet,i}\}$.

We mutate vertices $j$ and $(j+1)$ in order and obtain the following arrows connecting to the vertex $j$:

\begin{align*}
\xymatrix{
\max\{i-1,j_\bullet-1\}  &  & j  \ar@/_20pt/[ll]_{a_j}   \ar[dl]^{b_j}   \ar@/^20pt/[rrr]^{d_{j+1}}  \ar@/_20pt/[drrr]^{1-d_{j+1}} & &  (j+1)  \ar[ll]   &  (j+2)  \\
 & \max\{i,j_\bullet\}'  & &  & & (j+2)'}.
\end{align*}
Nextly, we mutate the vertex $j$ and compare the denominators in the exchange relation using Theorem \ref{cluster variables bijection with almost positive roots}, then 
\begin{align*}
x[\alpha_{i,j}] x[\alpha_{j+1,j+1}]  =  x[\alpha_{i,j+1}] + x[\alpha_{i,\max\{i-1,j_\bullet-1\}}]^{a_j} {x'}_{\max\{i, j_\bullet\}}^{b_j} x[-\alpha_{j+2}]^{d_{j+1}} {x'}^{1-d_{j+1}}_{j+2}.
\end{align*}

If there is an arrow $j\to (j-1)$ in $Q_\xi$, then we reverse all the orientations and obtain the same equation.
\end{proof}

\begin{remark}  
\label{note of our formula}
\begin{itemize} 
\item[(1)] The $q$-character of $\iota(x[\alpha_{i,j+1}])$ is the product of $q$-characters of $\iota(x[\alpha_{i,j}])$ and $\iota(x[\alpha_{j+1,j+1}])$ except those terms appearing in the product of $q$-characters of $\iota(x[\alpha_{i,\max\{i-1,j_\bullet-1\}}]^{a_j})$, $\iota({x'}_{\max\{i, j_\bullet\}}^{b_j})$, $\iota(x[-\alpha_{j+2}]^{d_{j+1}})$, and $\iota({x'}^{1-d_{j+1}}_{j+2})$. 
\item[(2)] Our recursive formula (\ref{equation41}) is different from a recursive formula given by Brito and Chari \cite[Proposition in Section 1.5]{BC19}, which are exchange relations corresponding to the mutation sequence $(i,i+1,\ldots,j+1)$, whereas our recursive formula needs to mutate $(i,i+1,\ldots,j,j+1,j)$ for a height function $\xi$ such that $j$ is source or sink.
\item[(3)] It is possible by Equation (\ref{equation41}) for us to give a combinatorial path formula in which we allow overlapping paths, generalizing Mukhin and Young's path formula for snake modules \cite{MY12a,MY12b}.
\end{itemize}
\end{remark}

In practice, given an HL-module, we construct a height function $\xi$ such that $\iota(x[\alpha_{i,j+1}])$ is an HL-module (up to isomorphic) for some $i<j$, and $j, (j+1)$ being sources or sinks, see our construction in Theorem \ref{HL defintion equivalent}. Hence Equation (\ref{equation41}) reduces to 
\begin{align*}
x[\alpha_{i,j+1}]  =  x[\alpha_{i,j}] x[\alpha_{j+1,j+1}]- x[\alpha_{i,\max\{i-1,j_\bullet-1\}}]^{a_j} {x'}_{\max\{i, j_\bullet\}}^{b_j}  x[-\alpha_{j+2}]^{d_{j+1}}
\end{align*}
giving a recursive formula for the $q$-character of an HL-module by an induction on the length of its highest $\ell$-weight monomial.

We end this section with an example illustrating Theorem \ref{theorem41}.
\begin{example}
Let  $L(Y_{1,-7}Y_{2,-4}Y_{3,-7})$ be an HL-module in type $A_4$. By Theorem \ref{HL defintion equivalent}, we take 
\[
\xi(1)=-6, \  \xi(2)=-5, \  \xi(3)=-6,  \  \xi(4)=-5, 
\]
and we have a convention that $\xi(0)=-5$, $\xi(5)=-6$. The quiver $Q_\xi$ is as follows.
\[
\xymatrix{
1  \ar[d] &   2 \ar[l] \ar[r]   &  3 \ar[d]  & 4 \ar[l]  \\
1'   &   2' \ar[u]   & 3'   & 4'  \ar[u]  }.
\]

Take $j=2$ being a source in $Q_\xi$ and by Theorem \ref{theorem41}, we have  
\begin{align*}
L(Y_{1,-7}Y_{2,-4}Y_{3,-7}) = L(Y_{1,-7}Y_{2,-4})  L(Y_{3,-7}) - L(Y_{1,-5}Y_{1,-7})L(Y_{4,-6}). 
\end{align*}
The $q$-character of $L(Y_{1,-7}Y_{2,-4}Y_{3,-7}) $ is the product of $q$-characters of $L(Y_{1,-7}Y_{2,-4})$ and $L(Y_{3,-7})$, except those terms which are in the product of $q$-characters of $L(Y_{1,-5}Y_{1,-7})$ and $L(Y_{4,-6})$. In the product of $q$-characters of $L(Y_{1,-7}Y_{2,-4})$ and $L(Y_{3,-7})$, there are 400 monomials, among them $75$ monomials are the same as the monomials in the product of $q$-characters of $L(Y_{1,-5}Y_{1,-7})$ and $L(Y_{4,-6})$.
\end{example}

\section*{Acknowledgements}
The authors are supported by the National Natural Science Foundation of China (no. 11771191, 12001254). Jian-Rong Li is supported by the Austrian Science Fund (FWF): M 2633-N32 Meitner Program. Bing Duan is grateful to Professor Ralf Schiffler for helpful discussions about snake graphs and cluster expansion formulas.

\end{document}